\newtheorem{theorem}{Theorem}[section]
\newtheorem{lemma}[theorem]{Lemma}
\newtheorem{corollary}[theorem]{Corollary}
\theoremstyle{definition}
\newtheorem{example}[theorem]{Example}
\theoremstyle{remark}
\newtheorem{remark}[theorem]{Remark}
\numberwithin{equation}{section}
\newcommand{\beq}{\begin{equation}}
\newcommand{\eeq}{\end{equation}}
\newcommand{\ZZ}{\mathbb{Z}}
\def\C{{\bf C}}
\def\cU{{\cal U}}
\def\cG{{\cal G}}
\def\cA{{\cal A}}
\def\ra{\rightarrow}
\def\bra{\langle}
\def\ket{\rangle}
\def\cA{{\mathcal A}}
\def\cB{{\mathcal B}}
\def\cE{{\mathcal E}}
\def\cF{{\mathcal F}}
\def\cG{{\mathcal G}}
\def\cH{{\mathcal H}}
\def\cI{{\mathcal I}}
\def\cJ{{\mathcal J}}
\def\cM{{\mathcal M}}
\def\cS{{\mathcal S}}
\def\cU{{\mathcal U}}
\def\cV{{\mathcal V}}
\def\cW{{\mathcal W}}
\def\ga{{\mathfrak a}}
\def\gb{{\mathfrak b}}
\def\gg{{\mathfrak g}}
\def\gh{{\mathfrak h}}
\def\gj{{\mathfrak j}}
\newcommand{\longhookrightarrow}{\ensuremath{\lhook\joinrel\relbar\joinrel\rightarrow}}
\newcommand {\be}{\begin{equation}}
\newcommand {\ee}{\end{equation}}
\newcommand{\h}{\begin{eqnarray*}}
\newcommand{\e}{\end{eqnarray*}}
\begin{document}


\title
{Twisted Chiral De Rham Complex, \\ Generalized Geometry, and T-duality}


\author{Andrew Linshaw}
\address{Department of Mathematics,
University of Denver, Denver, CO 80208, USA}
\email{andrew.linshaw@du.edu}

 \author{Varghese Mathai}
\address{School of Mathematical Sciences,
University of Adelaide, Adelaide 5005, Australia}
\email{mathai.varghese@adelaide.edu.au}

\subjclass[2010]{Primary 17B69, Secondary 53D18, 58A12, 17B68, 81T30}
\keywords{Generalized Geometry, Exact Courant Algebroids, Twisted Chiral de Rham Complex, Functoriality, Vertex Operator Algebras, Twisted de Rham Complex, T-duality}
\date{}
\thanks{ AL was partially supported
by the grant 318755 from the Simons Foundation. 
VM was supported by funding from the Australian Research Council, through Discovery Projects
DP110100072 and DP130103924. 
}

\maketitle

\begin{abstract}
The chiral de Rham complex of Malikov, Schechtman, and Vaintrob, is a sheaf of differential graded vertex algebras that exists on any smooth manifold $Z$, and contains the ordinary de Rham complex at weight zero. Given a closed 3-form $H$ on $Z$, we construct the twisted chiral de Rham differential $D_H$, which coincides with the ordinary twisted differential in weight zero. We show that its cohomology vanishes in positive weight and coincides with the ordinary twisted cohomology in weight zero. As a consequence, we propose that in a background flux, Ramond-Ramond fields can be interpreted as $D_H$-closed elements of the chiral de Rham complex. Given a T-dual pair of principal circle bundles $Z, \widehat{Z}$ with fluxes $H, \widehat{H}$, we establish a degree-shifting linear isomorphism between a central quotient of the $i\mathbb{R}[t]$-invariant chiral de Rham complexes of $Z$ and $\widehat{Z}$. At weight zero, it restricts to the usual isomorphism of $S^1$-invariant differential forms, and induces the usual isomorphism in twisted cohomology. This is interpreted as T-duality in type II string theory from a loop space perspective. A key ingredient in defining this isomorphism is the language of Courant algebroids, which clarifies the notion of functoriality of the chiral de Rham complex. \end{abstract}

\section*{Introduction}
It is a folklore principle that the equations of string theory are formulated on the space of free loops $LZ$ where $Z$ is spacetime. Constructions involving the space of free loops lead to vertex algebras, whose relevance to strings in string theory is comparable to that of Lie algebras in the classical physics of point particles. For example, vertex algebras arise naturally in the projective representation theory of loop groups, cf. Chapter 13 in \cite{PS}. 
The notion of vertex algebra also arises naturally from the Wightman axioms for quantum field theory, see Chapter 1 in \cite{K}.
A central example relevant to this paper is the chiral de Rham complex of Malikov, Schechtman and Vaintrob \cite{MSV1,MSV2}, which is a sheaf of vertex algebras $\Omega^{\text{ch}}_Z$ that exists on any smooth manifold $Z$ in either the $C^{\infty}$, holomorphic, or algebraic categories. It was formalised in the algebraic category in \cite{KV}. See for example \cite{Witten07,KO,Borisov} for the tremendous impact of the chiral de Rham complex in string theory. Let $L^+Z$ denote the subspace of $LZ$ consisting of null-homotopic loops on $Z$. There is a projection map $p: L^+Z \to Z$ given by evaluation at $0$. Roughly speaking, the chiral de Rham complex of $Z$ is the image under $p$ of the semi-infinite de Rham complex of
the D-module of Dirac delta functions along $L^+Z$. The algebra of global sections $\Omega^{\text{ch}}(Z)$ has a grading by conformal weight, and the weight zero component is isomorphic to the usual de Rham complex $\Omega^*(Z)$. The de Rham differential $d$ extends to a vertex algebra derivation $D$ on $\Omega^{\text{ch}}(Z)$. Even though $\Omega^{\text{ch}}(Z)$ is much larger, the cohomology $H^*(\Omega^{\text{ch}}(Z), D)$ vanishes in positive weight, so the inclusion $(\Omega^*(Z), d) \hookrightarrow (\Omega^{\text{ch}}(Z), D)$ is a quasi-isomorphism.

This paper is also partly motivated by recent developments in string theory in a background flux. In \cite{BM00}, it was argued that D-brane charges in a background H-flux take values in twisted K-theory of spacetime $Z$, $K^\bullet(Z,H)$. The Chern-Weil representatives of the twisted Chern character
$Ch_H:  K^\bullet(Z,H) \to H^\bullet(Z,H)$ taking values in the twisted de Rham cohomology, Ramond-Ramond fields, were defined and their properties studied in \cite{BCMMS,MS}. Also central is generalized geometry initiated by Hitchin, and developed by him and his students \cite{Hitchin,Hitchin2,gualtieri}, and in particular exact Courant algebroids \cite{Severa1,Severa2}.

Our first goal in this paper is to construct the {\it twisted chiral de Rham differential} $D_H$ on $\Omega^{\text{ch}}(Z)$. Regarding $\Omega^{\text{ch}}(Z)$ as a module over itself rather than a vertex algebra, $D_H$ is a square-zero derivation in the category of modules. It preserves conformal weight and is homogeneous with respect to a $\mathbb{Z}/2\mathbb{Z}$ grading, and we denote this $\mathbb{Z}/2\mathbb{Z}$-graded complex by $(\Omega^{\text{ch},\bullet}(Z), D_H)$. The weight zero subcomplex coincides with the classical twisted de Rham complex $(\Omega^{\bullet}(Z), d_H)$. As in the untwisted setting, the cohomology $H^{\bullet} (\Omega^{\text{ch}}(Z), D_H)$ vanishes in positive weight and coincides with $H^{\bullet}(Z,H)$ at weight zero. We propose that the configuration space of Ramond-Ramond fields in a background H-flux is the subspace of $\Omega^{\text{ch}}(Z)$ that is closed under $D_H$. This perspective could be more natural in the context of string theory since it incorporates massive Ramond-Ramond fields, where the mass is given by the conformal weight. Even though this configuration space is much richer than the $d_H$-closed differential forms and captures information about the loop space of $Z$, the D-brane charges are the same as in the classical setting since the twisted cohomology vanishes in positive weight. We mention in passing that there are also very different types of twists of the chiral de Rham complex, as studied in \cite{GMS1,GMS2}.

\subsection{T-duality} Our second goal in this paper is to establish a version of T-duality in the chiral setting. T-duality for pairs $(Z,H)$ consisting of nontrivial circle bundles, together with degree 3 H-flux with integral periods, was originally studied in detail in \cite{BEM,BEM2,BHM,BHM05}.  In string theory, T-dual pairs are distinct compactification manifolds that cannot be distinguished by any experiment, which implies the isomorphisms of a number of other structures, such as Courant algebroids \cite{cavalcanti}, generalized complex structures \cite{cavalcanti} and twisted K-theory \cite{BEM}, see also \cite{BS,RR88}. More precisely, the following situation is studied.
\begin{equation}
\xymatrix{(Z,H)\ar[dr]_{\pi}&&
(\widehat Z, \widehat H)\ar[dl]^{\widehat \pi}\\
& M&}
\end{equation}
where $Z, \widehat Z$ are principal circle bundles over a base $M$ with fluxes $H$ and $\widehat H$, respectively, satisfying
$\pi_*(H)=c_1(\widehat Z), \, \widehat \pi_*(\widehat H)=c_1(Z)$ and $H-\widehat H$ is exact on the
correspondence space $Z\times_M \widehat Z$. Then Bouwknegt, Evslin and Mathai \cite{BEM,BEM2} proved that
there is an isomorphism of twisted K-theories 
\begin{equation}\label{tduality-kthy}
K^\bullet(Z,H) \cong K^{\bullet+1}(\widehat Z, \widehat H)
\end{equation}
and an isomorphism of twisted cohomology theories, 
\begin{equation}\label{tduality-coh}
H^\bullet(Z,H) \cong H^{\bullet+1}(\widehat Z, \widehat H).
\end{equation}
In fact, \eqref{tduality-coh} is induced by a degree-shifting linear isomorphism of $S^1$-invariant differential complexes \begin{equation} \label{tduality-chain} T: (\Omega^{\bullet}(Z)^{S^1} ,d_H) \stackrel{\cong}{\longrightarrow} (\Omega^{\bullet + 1}(Z)^{S^1}, d_{\tilde{H}}).\end{equation} As shown by Cavalcanti and Gualtieri in \cite{cavalcanti}, there is also an isomorphism of Courant algebroids \begin{equation} \label{tduality-CG} \tau: (TZ \oplus T^*Z, [\cdot,\cdot]_H)/S^1 \stackrel{\cong}{\longrightarrow} (T\widehat{Z} \oplus T^*\widehat{Z}, [\cdot,\cdot]_{\widehat{H}})/S^1,\end{equation} which is compatible with \eqref{tduality-chain}, regarded as an isomorphism of the associated Clifford modules.

In the chiral setting, it is immediate from \eqref{tduality-coh} and the positive weight vanishing theorem for twisted chiral de Rham cohomology that we have an isomorphism \begin{equation} \label{intro-chiraltduality-coh} H^{\bullet}(\Omega^{\text{ch}}(Z), D_H) \stackrel{\cong}{\longrightarrow} H^{\bullet + 1}(\Omega^{\text{ch}}(\widehat{Z}), D_{\widehat{H}}).\end{equation} such that the following diagram commutes.


\begin{equation}\label{tduality-commute}
\xymatrix @=4pc 
{ H^\bullet(\Omega^{\text{ch}}(Z), D_H)  \ar[d]_{\cong}  \ar[r]^{} &
H^{\bullet+1}(\Omega^{\text{ch}}(\widehat Z), D_{\widehat H}) \ar[d]^{\cong}   \\ H^\bullet(Z, H)  \ar[r]_{}& H^{\bullet +1}(\widehat Z, \widehat H)}
\end{equation}
This isomorphism is interpreted as {\em T-duality from a loop space perspective}, giving an equivalence (rationally) between 
D-brane charges in a background H-flux in type IIA and IIB string theories. However, this is only a preliminary result because the isomorphism in cohomology is not induced by an isomorphism of the underlying chiral structures.

Our next goal is to establish the chiral analogues of the degree-shifting isomorphism \eqref{tduality-chain} of $S^1$-invariant twisted de Rham complexes, as well as the isomorphism \eqref{tduality-CG} of Courant algebroids. The difficulty is that the chiral de Rham complex is not functorial since its construction involves both differential forms and vector fields. The language of Courant algebroids is needed to establish the right notion of functoriality in this setting. The connection between Courant algebroids and the chiral de Rham complex was first observed by Bressler \cite{Bressler}, and was developed further by Heluani \cite{Heluani1}. An important result of \cite{Heluani1} is that for every Courant algebroid $E$ on $Z$, there is a sheaf $\cU^E$ of vertex algebras on $Z$, which coincides with $\Omega^{\text{ch}}_Z$ when $E$ is the standard Courant algebroid $TZ \oplus T^*Z$. Specializing this to exact Courant algebroids $E = (TZ \oplus T^*Z, [\cdot,\cdot]_H)$, we obtain the {\it $H$-twisted chiral de Rham complex} which we shall denote by $(\Omega^{\text{ch},H}_Z, \tilde{D})$. The assignment
$$\Big\{\text{exact Courant algebroid}\Big\} \Longrightarrow  \Big\{\text{twisted chiral de Rham complex} \Big\}$$ is clearly functorial. Morphisms of exact Courant algebroids have been well studied \cite{Bursztyn,Li-Bland}, and they induce morphisms of twisted chiral de Rham complexes. It is surprising that although exact Courant algebroids are classified by $H^3(Z, \mathbb{R})$, for any choice of $H$ there is an {\it untwisting} isomorphism \begin{equation} \label{intro-untwisting}(\Omega^{\text{ch}}_Z, D) \ra (\Omega^{\text{ch},H}_Z, \tilde{D})\end{equation} of sheaves of differential graded vertex algebras.

We may also regard $\Omega^{\text{ch},H}_Z$ as a sheaf of modules over itself. It has a twisted differential $\tilde{D}_H$ which is a square-zero derivation in the category of modules. The differential preserves weight and is homogeneous with respect to a $\mathbb{Z}/2\mathbb{Z}$ grading, and we denote this complex by $(\Omega^{\text{ch},H,\bullet}_Z, \tilde{D}_H)$. We have an isomorphism of sheaves of differential graded modules \begin{equation} \label{intro-untwistingmodule} (\Omega^{\text{ch},\bullet}_Z, D_H) \ra (\Omega^{\text{ch},H,\bullet}_Z, \tilde{D}_H) .\end{equation}

Next, let $\pi: Z \rightarrow M$ be a principal $S^1$-bundle. Fix a connection form $A \in \Omega^1(Z)$, and let $X_A$ denote the dual vector field, which infinitesimally generates the $S^1$-action. Let $L_A \in \Omega^{\text{ch}}(Z)$ be the vertex operator corresponding to $X_A$. Identifying the Lie algebra of $S^1$ with $i\mathbb{R}$, the non-negative modes of $L_A$ represent the Lie algebra $i\mathbb{R}[t]$ on $\Omega^{\text{ch}}(U)$ for any open set $U\subset Z$. Then $\Omega^{\text{ch}}(U)^{i\mathbb{R}[t]}$ is just the commutant $\text{Com}(L_A,\Omega^{\text{ch}}(U))$. Since $L_A$ commutes with itself, it lies in the center of $\Omega^{\text{ch}}(U)^{i\mathbb{R}[t]}$, and we may take the quotient $\Omega^{\text{ch}}(U)^{i\mathbb{R}[t]} / \langle L_A\rangle$ by the ideal generated by $L_A$. The assignment $$U \mapsto  \Omega^{\text{ch}}(U)^{i\mathbb{R}[t]} / \langle L_A \rangle$$ defines a sheaf of vertex algebras on $Z$ which we denote by $(\Omega^{\text{ch}}_Z)^{i\mathbb{R}[t]} / \langle L_A \rangle$.


There is an induced differential $D$ on $(\Omega^{\text{ch}}_Z)^{i\mathbb{R}[t]} / \langle L_A \rangle$, and a surprising result is that $H^*((\Omega^{\text{ch}}(Z)^{i\mathbb{R}[t]} / \langle L_A \rangle),D)$ does {\it not} vanish in positive weight. If $Z$ has finite topological type, meaning that it has a finite covering by contractible open sets, we will show that \begin{equation} \label{intro:cohomologyquotient} H^*((\Omega^{\text{ch}}(Z)^{i\mathbb{R}[t]} / \langle L_A \rangle),D) \cong H^*(Z) \otimes \cF,\end{equation} where $\cF$ denotes the {\it symplectic fermion algebra}. Note that the higher-weight components depend only on a fixed vertex algebra $\cF$ that is independent of $Z$, and thus capture no additional topological information. If $Z$ has finite topological type, the graded character of $H^*((\Omega^{\text{ch}}(Z)^{i\mathbb{R}[t]} / \langle L_A \rangle),D)$ can be written down immediately from \eqref{intro:cohomologyquotient}. We remark that the symplectic fermion theory was one of the first logarithmic conformal field theories to appear in the physics literature \cite{Kausch}. The orbifold $\cF^{\mathbb{Z}/2\mathbb{Z}}$ coincides with the simplest triplet vertex algebra $\cW_{2,1}$ which was one of the first examples of a non-rational, $C_2$-cofinite vertex algebra \cite{Abe,AM}.


We may also regard $(\Omega^{\text{ch}}_Z)^{i\mathbb{R}[t]} / \langle L_A \rangle$ as a $\mathbb{Z}/2\mathbb{Z}$-graded sheaf of modules over itself, and we use the notation $(\Omega^{\text{ch},\bullet}_Z)^{i\mathbb{R}[t]} / \langle L_A \rangle$. It has an induced twisted differential $D_H$, and if $Z$ has finite topological type we will show that $$H^{\bullet}((\Omega^{\text{ch}}(Z)^{i\mathbb{R}[t]} / \langle L_A \rangle),D_H) \cong H^{\bullet}(Z,H) \otimes \cF.$$ Again, the positive-weight components carry no new topological information, and the graded character can be written down.

We also consider the $H$-twisted version of this sheaf, namely $(\Omega^{\text{ch},H}_Z)^{i\mathbb{R}[t]} / \langle \tilde{L}_A - \widetilde{\iota_A H} \rangle$. Here $\tilde{L}_A - \widetilde{\iota_A H}$ corresponds to $L_A$ under the untwisting isomorphism \eqref{intro-untwisting}. We have an isomorphism of sheaves of differential graded vertex algebras \begin{equation} \label{intro-untwistinginv} (((\Omega^{\text{ch}}_Z)^{i\mathbb{R}[t]} / \langle L_A \rangle),D) \ra (((\Omega^{\text{ch},H}_Z)^{i\mathbb{R}[t]} / \langle \tilde{L}_A - \widetilde{\iota_A H} \rangle), \tilde{D}), \end{equation} and an isomorphism of sheaves of modules $$(((\Omega^{\text{ch},\bullet}_Z)^{i\mathbb{R}[t]} / \langle L_A \rangle), D_H) \ra (((\Omega^{\text{ch},H,\bullet}_Z)^{i\mathbb{R}[t]} / \langle \tilde{L}_A - \widetilde{\iota_A H} \rangle), \tilde{D}_H) .$$ 

The chiral analogue of the Cavalcanti-Gualtieri isomorphism of Courant algebroids \eqref{tduality-CG} is an isomorphism of vertex algebra sheaves on $M$
\begin{equation}\label{tduality-chiral} \tau^{\text{ch}}: \pi_*\left((\Omega^{\text{ch}, H}_Z)^{i\mathbb{R}[t]} / \langle \tilde{L}_A - \widetilde{\iota_A H} \rangle \right) \ra \widehat{\pi}_* \left((\Omega^{\text{ch},\widehat{H}}_{\widehat Z})^{i\mathbb{R}[t]} / \langle \tilde{L}_{\widehat{A}} - \widetilde{\iota_{\widehat{A}} \widehat{H}} \rangle\right).\end{equation} Note that this map has no degree shift. By composing it on the left and right by the isomorphisms \eqref{intro-untwistinginv}, we obtain an isomorphism of {\it untwisted} vertex algebra sheaves
\begin{equation}\label{tduality-chiraluntwist} \tau^{\text{ch}}: \pi_*\left((\Omega^{\text{ch}}_Z)^{i\mathbb{R}[t]} / \langle L_A \rangle \right) \ra \widehat{\pi}_* \left((\Omega^{\text{ch}}_{\widehat Z})^{i\mathbb{R}[t]} / \langle L_{\widehat{A}} \rangle \right).\end{equation} which we also denote by $\tau^{\text{ch}}$. There is a compatible isomorphism of sheaves of modules
 \begin{equation} \label{tduality-chiralmodule} T^{\text{ch}}: \pi_*\left((\Omega^{\text{ch}, \bullet}_Z)^{i\mathbb{R}[t]}/ \langle L_A \rangle\right) \ra  \widehat{\pi}_*\left((\Omega^{\text{ch}, \bullet+1}_{\widehat Z})^{i\mathbb{R}[t]} / \langle L_{\widehat{A}} \rangle \right), \end{equation} which incorporates the degree shift relevant to T-duality. The isomorphism of global sections  \begin{equation} \label{tduality-chiralmoduleuntwistglobal} T^{\text{ch}}: \Omega^{\text{ch}, \bullet}(Z) ^{i\mathbb{R}[t]}/  \langle L_A \rangle \ra  \Omega^{\text{ch}, \bullet+1}({\widehat Z})^{i\mathbb{R}[t]} / \langle L_{\widehat{A}} \rangle, \end{equation} is the analogue of \eqref{tduality-chain}. At weight zero, it coincides with \eqref{tduality-chain} and induces the cohomology isomorphism \eqref{tduality-coh}. Even though \eqref{tduality-chiralmoduleuntwistglobal} does not intertwine the differentials $D_H$ and $D_{\widehat{H}}$ in positive weight, when $Z$ has finite topological type we still get a linear isomorphism \begin{equation} \label{tduality-chiralcohomology} H^{\bullet} ((\Omega^{\text{ch}}(Z) ^{i\mathbb{R}[t]}/  \langle L_A \rangle), D_H) \cong H^{\bullet+1}((\Omega^{\text{ch}}({\widehat Z})^{i\mathbb{R}[t]} / \langle L_{\widehat{A}} \rangle), \widehat{D}_H).\end{equation}
 
The isomorphisms \eqref{tduality-chiraluntwist}, \eqref{tduality-chiralmoduleuntwistglobal}, and \eqref{tduality-chiralcohomology} are interpreted as a more refined version of T-duality from a loop space perspective. For an alternate loop space perspective of T-duality, see \cite{HM14}. For a relation of current algebras to T-duality, see \cite{Hekmati}. An example of T-duality using OPE in the case of the trivial circle bundle on a 2D torus with nontrivial H-flux is studied in \cite{AH}.



\subsection{Outline of paper}

In \S\ref{sect:courant} we briefly review Courant algebroids and describe the main examples we need, which are the exact Courant algebroids, and the $S^1$-invariant Courant algebroids associated to a principal $S^1$-bundle. In \S\ref{sect:vertex} we introduce vertex algebras and describe the examples and constructions we need, including orbifolds, commutants, and modules. In \S\ref{sect:cdr} we recall the chiral de Rham complex and give a coordinate-free construction by strong generators and relations. We also construct the twisted chiral de Rham differential $D_H$ and prove the positive weight vanishing theorem for $H^{\bullet}(\Omega^{\text{ch}}(Z), D_H)$. In  \S\ref{sect:courantsheaves}, we recall Heluani's theorem that associates to any Courant algebroid $E\rightarrow Z$ a sheaf of vertex algebras $\cU^E$ on $Z$. In the case $E = (TZ \oplus T^*Z, [\cdot,\cdot]_H)$, we denote $\cU^E$ by $\Omega^{\text{ch},H}_Z$ and we find generators and relations for $\Omega^{\text{ch},H}_Z$, and establish the untwisting isomorphism \eqref{intro-untwisting} for any $H$. We also find generators and relations for $(\Omega^{\text{ch}}_Z)^{i \mathbb{R}[t]} / \langle L_A \rangle$ as well as its twisted counterpart, and we establish the relevant untwisting theorems. In \S\ref{sect:T-duality} we review T-duality following the setup and notation from \cite{BEM}. Finally, in \S\ref{sect:chiralT-duality}, we construct the vertex algebra analogues of  \eqref{tduality-coh}, \eqref{tduality-chain} and \eqref{tduality-CG}.

\tableofcontents

\section{Generalized geometry, Courant algebroids and Courant morphisms}\label{sect:courant}

\subsection{Courant algebroids}
A {\it Courant algebroid} is a vector bundle $E \ra Z$ equipped with a nondegenerate, symmetric bilinear form $$\langle \cdot ,\cdot \rangle: C^{\infty}(E) \otimes  C^{\infty}(E) \ra C^{\infty}(Z),$$  a skew-symmetric bracket $$[\cdot ,\cdot]: C^{\infty}(E) \otimes  C^{\infty}(E) \ra C^{\infty}(E),$$ and a smooth bundle map $\pi: E \ra TZ$ called the {\it anchor}. There is an induced differential operator $d: C^{\infty}(Z) \ra C^{\infty}(E)$ given by $$\langle df, A\rangle = \frac{1}{2} \pi(A) f.$$ For all $A,B,C \in C^{\infty}(E)$ and $f\in C^{\infty}(Z)$, these structures satisfy
\begin{enumerate} \item $\pi([A,B]) = [\pi(A),\pi(B)]$.
\item $\text{Jac}(A,B,C) = d(\text{Nij}(A,B,C))$. Here $$\text{Jac}(A,B,C) = [[A,B],C] + [[B,C],A] + [[C,A],B],$$ $$\text{Nij}(A,B,C) = \frac{1}{3}(\langle [A,B],C\rangle + \langle [B,C],A\rangle + \langle [C,A],B\rangle.$$ 
\item $\pi \circ d = 0$.
\item $[A,fB] = (\pi(A)f)B + f[A,B] - \langle A,B\rangle df$.
\item $\pi(A) \langle B,C\rangle = \langle [A,B] + d(\langle A,B\rangle),C \rangle + \langle B,[A,C] + d(\langle A,C\rangle) \rangle$.
\end{enumerate}
Notice that there is a dual to the anchor map $\pi^*: T^*Z \to E^*\cong E$, where the isomorphism is given by $\langle \cdot ,\cdot \rangle$. It follows that the bracket in general does not satisfy the Jacobi identity, so is not a Lie bracket.

\begin{remark}
The notion of a Courant algebroid was introduced by Liu-Weinstein-Xu \cite{LWX} so as to provide a framework for the theory of 
Courant and Dorfman \cite{Courant,Dorfman}. The original definition was later simplified by Roytenberg \cite{Roytenberg}  and others.
\end{remark}

A Courant algebroid over a point is a quadratic Lie algebra, that is a Lie algebra with an invariant nondegenerate bilinear form.
The so called {\em standard} Courant algebroid is $E= TZ \oplus T^*Z$, where $\pi$ is the projection, and $$ \langle X + \xi, Y + \eta\rangle = \frac{1}{2}( \iota_X \eta + \iota_Y \xi),$$ $$[X + \xi, Y + \eta] = [X,Y] + L_X\eta - L_Y \xi - \frac{1}{2} d(\iota_X \eta - \iota_Y \xi).$$
$E$ is also known as the {\em generalized tangent bundle} in generalized geometry.

More generally, one considers {\em exact} Courant algebroids $E$ in generalized geometry. That is, there is an exact sequence of vector bundles
$$0\ra T^*Z \ra E \ra TZ\ra 0,$$ where the last map is $\pi$. Exact Courant algebroids are classified by $H^3(Z,\mathbb{R})$, \cite{Severa1,Severa2}. For every such $E$, there is a splitting $E \cong TZ \oplus T^*Z$ and a closed $3$-form $H \in \Omega^3(Z)$ such that the bilinear form and bracket are given by
\begin{equation} \label{twistedbracket} [X + \xi, Y + \eta]_H = [X,Y] + L_X\eta - L_Y \xi - \frac{1}{2} d(\iota_X \eta - \iota_Y \xi)+ \iota_X \iota_Y H.\end{equation}
For any $H$, $\Omega^*(Z)$ is a Clifford module over $TZ\oplus T^*Z$ via \begin{equation} \label{cliffordmod} (X+\xi)\cdot \omega = \iota_X (\omega) + \xi \wedge \omega.\end{equation} Define the {\it twisted de Rham differential} $d_H$ on $\Omega^*(Z)$ by $$d_H(\omega) = d(\omega) + H\wedge \omega.$$ Note that $d_H$ is not a derivation of $\Omega^*(Z)$ regarded as an algebra, but it is a derivation of $\Omega^*(Z)$ regarded as a left $\Omega^*(Z)$-module. In particular, for homogeneous $a,\omega \in \Omega^*(Z)$, we have $d_H(a\omega) = d(a) \omega + (-1)^{|a| |\omega|} a d_H(\omega)$. For $X,Y \in \text{Vect}(Z)$, $\xi,\eta \in \Omega^1(Z)$, and $\omega \in \Omega^*(Z)$, we have
\begin{equation} \label{twistedcompat} [X+\xi, Y+\eta]_H \cdot \omega = [[d_H,X+\xi],Y+\eta]\cdot \omega. \end{equation} Finally, note that $d_H$ is not homogeneous with respect to degree. We may regard $\Omega^*(Z)$ as a $\mathbb{Z}/2\mathbb{Z}$-graded complex by reducing the degree modulo 2, and we use the notation $\Omega^{\bullet}(Z)$. Then $d_H$ maps $\Omega^{\bullet}(Z)$ to $\Omega^{\bullet+1}(Z)$.

There is one more class of Courant algebroids that we need to consider. Suppose that $\pi: Z \ra M$ is a principal circle bundle. Let $E$ denote the quotient $(TZ \oplus T^*Z)/{S^1}$ of the standard Courant algebroid on $Z$ by the $S^1$-action. It is well known that $E$ is a Courant algebroid which is not exact, but is transitive; that is, the anchor map is surjective. More generally, given a closed $3$-form $H$ on $Z$, $E = (TZ \oplus T^*Z, [\cdot , \cdot]_H)/{S^1}$ is transitive. Via the {\it dimension reduction} formalism, we shall regard $E$ as a Courant algebroid on $M$ rather than $Z$. Fix a connection form $A \in \Omega^1(Z)$, and let $X_A$ denote the vector field on $Z$ generated by the $S^1$-action, normalized so that \begin{equation} \label{normalization} \iota_{A}A = 1.\end{equation} Here $\iota_A$ denotes the contraction along $X_A$. Let $\text{Vect}_{\text{hor}}(Z)$ denote the set of $S^1$-invariant vector fields on $Z$ such that $\iota_X A = 0$, which under $\pi_*$ can be identified with $\text{Vect}(M)$. Then a vector field in $TZ/S^1$ can be written as \begin{equation} \label{dimreduction1} (X, f) = X + f X_A,\qquad X \in \text{Vect}(M), \qquad f \in C^{\infty}(M).\end{equation} Similarly, an element of $(T^*Z)/S^1$ can be written as \begin{equation} \label{dimreduction2} (\omega,g) = \omega + g A,\qquad \omega \in \Omega^1(M), \qquad g \in C^{\infty}(M).\end{equation} The anchor map $E \ra TM$ is just the composition of the anchor map $E \ra TZ$ with $\pi_*$. Writing $H = H^{(3)} + A \wedge H^{(2)}$ and $F_A = dA$, the following expression for the bracket $[\cdot, \cdot]_H$ is given in \cite{Bouwknegt07}.
\begin{multline} \label{dimredbracket}  [(X_1, f_1) + (\omega_1, g_1), (X_2, f_2) + (\omega_2, g_2)]_H = \bigg([X_1,X_2], X_1(f_2) - X_2(f_1) + \iota_{X_1} \iota_{X_2} F_A\bigg) \\
+ \bigg(\text{Lie}_{X_1}(\omega_2) - \text{Lie}_{X_2}(\omega_1) + (g_2 \iota_{X_1} F_A - g_1 \iota_{X_2} F_A) - \frac{1}{2} d(\iota_{X_1} \omega_2 - \iota_{X_2} \omega_1)\\
+\frac{1}{2} (df_1 g_2 + f_2 dg_1 - f_1 dg_2 - df_2 g_1) + \iota_{X_1} \iota_{X_2} H^{(3)} + (f_2 \iota_{X_1} H^{(2)} - f_1 \iota_{X_1} H^{(2)}), X_1(g_2) - X_2(g_1) + \iota_{X_1} \iota_{X_2} H^{(2)}\bigg). \end{multline} 



\subsection{Courant morphisms}

Suppose $E\to Z$ and $E'\to Z'$ are Courant algebroids. Denote by $E^{op}$ the Courant algebroid $E$ with 
the opposite inner product. Given a smooth map 
$\Phi\colon Z\to Z'$ let 
\[ 
\text{Graph}(\Phi)=\{(\Phi(m),m)|m\in Z\}\subset Z'\times Z\]
be its graph relation. A {\em Courant
    morphism} $R_\Phi\colon E\to E'$ is a smooth map $\Phi\colon
  Z\to Z'$ together with a subbundle
\[ R_\Phi\subset (E'\times {E}^{op})|_{\text{Graph}(\Phi)}\]
with the following properties: 
\begin{enumerate}
\item $R_\Phi$ is a Lagrangian submanifold (i.e. $R_\Phi^\perp=R_\Phi$). 
\item For $e'\in E', \, e\in E$, the image $(e'\times e )(R_\Phi)$ is tangent to 
      $\text{Graph}(\Phi)$. 
\item Any pair of  sections of $E'\times E^{op}$ that restrict to
      sections of $R_\Phi$, then so does their Courant bracket.
\end{enumerate}
The composition of Courant morphisms is given as the fibrewise
composition of relations, where  the usual transversality conditions
are imposed to ensure smoothness.  

\begin{remark}
The notion of a Courant morphism is due to {\v{S}}evera
\cite{Severa3} and Alekseev-Xu
\cite{AX}, see also \cite{Li-Bland} for a published account.
\end{remark}

\begin{enumerate}
\item 
A  diffeomorphism $\Phi$ of $Z$ clearly gives rise to an automorphism  $R_\Phi$ of the 
standard Courant algebroid $TZ \oplus T^*Z$. Let $B \in \Omega^2(Z)$ be a 2-form on $Z$.
Then $e^B(X+\xi) = X+\xi + i_X B$ is an automorphism of the 
standard Courant algebroid if and only if $B$ is closed. More generally, the semidirect product 
$\text{Diff}(Z)\ltimes \Omega^2_{\text{cl}}(Z)$ is the automorphism group of the 
standard Courant algebroid, where $\Omega^2_{\text{cl}}(Z)$ denotes the space of closed $2$-forms \cite{Hitchin2}.\\

\item Given an exact Courant algebroid $(TZ \oplus T^*Z, [\cdot , \cdot]_H)$ where 
$H$ is a closed 3-form on $Z$,  then any diffeomorphism $\Phi$ of $Z$ 
satisfying $\Phi^*(H)=H$ clearly gives rise to an automorphism  $R_\Phi$.
More generally, the semidirect product 
$\text{Diff}_H(Z)\ltimes \Omega^2_{\text{cl}}(Z)$ is contained in the automorphism group of the 
exact Courant algebroid.\\

\item We give an example of a Courant morphism that is not an automorphism. Let $0<n<N$ be integers
 and let $S$ be an $n$-dimensional totally geodesic submanifold of the $N$-dimensional Riemannian manifold $Z$. 
 It is a straightforward fact that vector fields on a Riemannian manifold
 induce vector fields on totally geodesic submanifolds compatible with the bracket, using the Levi-Civita
 connections on the manifold and its totally geodesic submanifold. Together with the restriction map 
 on differential forms, we get a morphism $R_\iota\colon TZ\oplus T^*Z \longrightarrow TS\oplus T^*S$ of 
 standard Courant algebroids, where $\iota\colon S\longhookrightarrow Z$ is the embedding. 
\end{enumerate}

\section{Vertex algebras}\label{sect:vertex}

In this section, we define vertex algebras, which have been discussed from various points of view in the literature (see for example \cite{Borcherds,FLM,K,FBZ}). We will follow the formalism developed in \cite{LZ2} and partly in \cite{LiI}. Let $V=V_0\oplus V_1$ be a super vector space over $\mathbb{C}$, and let $z,w$ be formal variables. By $\text{QO}(V)$, we mean the space of all linear maps $$V\ra V((z))=\{\sum_{n\in\mathbb{Z}} v(n) z^{-n-1}|
v(n)\in V,\ v(n)=0\ \text{for} \ n>\!\!>0 \}.$$ Each $a\in \text{QO}(V)$ can be represented as a power series
$$a=a(z)=\sum_{n\in\mathbb{Z}}a(n)z^{-n-1}\in \text{End}(V)[[z,z^{-1}]].$$ Each $a\in
\text{QO}(V)$ is assumed to be of the form $a=a^0+a^1$ where $a^i:V_j\ra V_{i+j}((z))$ for $i,j\in\mathbb{Z}/2\mathbb{Z}$, and we write $|a^i| = i$.

There are nonassociative bilinear operations
$\circ_n$ on $\text{QO}(V)$, indexed by $n\in\mathbb{Z}$, which we call the $n^{\text{th}}$ circle
products. For homogeneous $a,b\in \text{QO}(V)$, they are defined by
$$ a(w)\circ_n b(w)=\text{Res}_z a(z)b(w)\ \iota_{|z|>|w|}(z-w)^n-
(-1)^{|a||b|}\text{Res}_z b(w)a(z)\ \iota_{|w|>|z|}(z-w)^n.$$
Here $\iota_{|z|>|w|}f(z,w)\in\mathbb{C}[[z,z^{-1},w,w^{-1}]]$ denotes the
power series expansion of a rational function $f$ in the region
$|z|>|w|$. We usually omit $\iota_{|z|>|w|}$ and just
write $(z-w)^{-1}$ to mean the expansion in the region $|z|>|w|$,
and write $-(w-z)^{-1}$ to mean the expansion in $|w|>|z|$. For $a,b\in \text{QO}(V)$, we have the following identity of formal power series, known as the {\it operator product expansion} (OPE) formula.
 \begin{equation}\label{opeform} a(z)b(w)=\sum_{n\geq 0}a(w)\circ_n
b(w)\ (z-w)^{-n-1}+:a(z)b(w):. \end{equation}
Here $:a(z)b(w):\ =a(z)_-b(w)\ +\ (-1)^{|a||b|} b(w)a(z)_+$, where $a(z)_-=\sum_{n<0}a(n)z^{-n-1}$ and $a(z)_+=\sum_{n\geq
0}a(n)z^{-n-1}$. We often write
$$a(z)b(w)\sim\sum_{n\geq 0}a(w)\circ_n b(w)\ (z-w)^{-n-1},$$ where
$\sim$ means equal modulo the term $:a(z)b(w):$, which is regular at $z=w$. 

Note that $:a(w)b(w):$ is a well-defined element of $\text{QO}(V)$. It is called the {\it Wick product} of $a$ and $b$, and it
coincides with $a\circ_{-1}b$. The other negative circle products are given by
$$ n!\ a(z)\circ_{-n-1}b(z)=\ :(\partial^n a(z))b(z):,\qquad \partial = \frac{d}{dz}.$$
For $a_1(z),\dots ,a_k(z)\in \text{QO}(V)$, the iterated Wick product is defined to be
\begin{equation}\label{iteratedwick} :a_1(z)a_2(z)\cdots a_k(z):\ =\ :a_1(z)b(z):,\qquad b(z)=\ :a_2(z)\cdots a_k(z):.\end{equation}
We often omit the variables $z,w$ when no confusion can arise.

The set $\text{QO}(V)$ is a nonassociative algebra with the operations $\circ_n$, which satisfy $1\circ_n a=\delta_{n,-1}a$ for
all $n$, and $a\circ_n 1=\delta_{n,-1}a$ for $n\geq -1$. A subspace $\cA\subset \text{QO}(V)$ containing $1$ which is closed under the circle products will be called a {\it quantum operator algebra} (QOA). A subset $S=\{a_i|\ i\in I\}$ of $\cA$ is said to {\it generate} $\cA$ if every $a\in\cA$ can be written as a linear combination of nonassociative words in the letters $a_i$, $\circ_n$, for $i\in I$ and $n\in\mathbb{Z}$. We say that $S$ {\it strongly generates} $\cA$ if every $a\in\cA$ can be written as a linear combination of words in the letters $a_i$, $\circ_n$ for $n<0$. Equivalently, $\cA$ is spanned by \begin{equation} \label{wickrelations} \{ :\partial^{k_1} a_{i_1}\cdots \partial^{k_m} a_{i_m}:| \ i_1,\dots,i_m \in I,\ k_1,\dots,k_m \geq 0\}.\end{equation} We say that $a,b\in \text{QO}(V)$ are {\it local} if if $(z-w)^N [a(z),b(w)]=0$ for some $N\geq 0$. Here $[\cdot ,\cdot]$ denotes the super bracket. A {\it vertex algebra} may be defined as a QOA whose elements are pairwise local. This definition is well known to be equivalent to the notion of a vertex algebra in \cite{FLM}. 

A very useful description of a vertex algebra $\cA$ is a strong generating set $\{a_i|\ i\in I\}$ for $\cA$, together with a set of generators $\{b_k|\ k\in K\}$ for the ideal $\cI$ of relations among the generators and their derivatives, that is, all expressions of the form \eqref{wickrelations} that vanish. Given such a description, to define a homomorphism $\phi$ from $\cA$ to another vertex algebra $\cB$, it suffices to define $\phi(a_i)$ for $i\in I$ and show the following.
\begin{enumerate}
\item $\phi$ preserves pairwise OPEs among the generators; equivalently, $\phi(a_i \circ_n a_j) = \phi(a_i) \circ_n \phi(a_j)$ for all $i,j \in I$ and $n\geq 0$.
\item $\phi(b_k) = 0$ for all $k \in K$.
\end{enumerate}
This shall be our method of constructing vertex algebra homomorphisms in this paper.

A {\it conformal structure} is an element $L(z) = \sum_{n\in \mathbb{Z}} L_n z^{-n-2}$ in a vertex algebra $\cA$ satisfying $$L(z) L(w) \sim \frac{c}{2}(z-w)^{-4} + 2L(w) (z-w)^{-1} + \partial L(w)(z-w)^{-1},$$ such that $L_{-1}$ acts by $\partial$ on $\cA$ and $L_0$ acts diagonalizably. The constant $c$ is called the {\it central charge}, and the grading by $L_0$-eigenvalue is called {\it conformal weight}. In all our examples, the conformal weight grading is by the nonnegative integers. In the presence of a conformal structure, we write a homogeneous element $a(z) =  \sum_{n\in Z} a(n) z^{-n-1}$ in the form $\sum_{n\in Z} a_n z^{-n-\text{wt}(a)}$ where $a_n = a(n+\text{wt}(a)-1)$.

A {\it module} $\cM$ over a vertex algebra $\cA$ is a vector space $\cM$ together with a QOA homomorphism $\cA \ra \text{QO}(\cM)$. In particular, for each $a\in \cA$, we have a field $a_{\cM}(z) = \sum_{n\in \mathbb{Z}} a_{\cM}(n) z^{-n-1}$ where $a_{\cM}(z) \in \text{End}(\cM)$. If $\cA$ and $\cM$ are graded by conformal weight, we write $a_{\cM}(z) = \sum_{n\in \mathbb{Z}} a_{\cM,n} z^{-n-\text{wt}(a)}$, and we require that $a_{\cM,n}$ has weight $- n$. 

The {\it orbifold} construction is a standard way to construct new vertex algebras from old one. Given a vertex algebra $\cA$ and a group $G$ of automorphisms of $\cA$, the invariant subalgebra $\cA^G$ is called an orbifold. Many interesting vertex algebras arise either as orbifolds or as extensions of orbifolds; the spectacular {\it moonshine vertex algebra} is an important example \cite{FLM}.

The {\it commutant} or {\it coset construction} is another way to construct new vertex algebras from old ones. Let $\cV$ be a vertex algebra and let $S$ be a subalgebra of $\cV$. The commutant $\text{Com}(S, \cV)$ is defined to be $\{v\in \cV|\ [a(z),v(w)] = 0,\ \forall a\in S\}$, which is always a vertex subalgebra of $\cV$. Equivalently, $a\circ_k v = 0$ for all $k\geq 0$ and $a\in S$. If $\cA$ is the vertex algebra generated by $S$, $\text{Com}(S, \cV) = \text{Com}(\cA, \cV)$.

\begin{example} ($\beta\gamma$ and $bc$ systems) Let $V$ be a finite-dimensional vector space. Regard
$V\oplus V^*$ as an abelian Lie algebra. Then its loop algebra has a one-dimensional central extension
by $\mathbb{C} \kappa $ which we denote by $\gh$, with bracket
$$[(x,x')t^n,(y,y')t^m]=(\bra y',x\ket-\bra x',y\ket)\delta_{n+m,0}\kappa.$$
Let $\gb\subset \gh$ be the subalgebra generated by $\kappa$, $(x,0)t^n$, $(0,x')t^{n+1}$, for $n\geq 0$, and let $\C$ be the one-dimensional $\gb$-module on which each $(x,0)t^n$, $(0,x')t^{n+1}$ act trivially and the central element $\kappa$ acts by
the identity. Consider the $U(\gh)$-module $U(\gh)\otimes_{U(\gb)}\C$. The operators representing $(x,0)t^n,(0,x')t^{n}$ on this module are denoted by $\beta^x_n,\gamma^{x'}_n$, and the fields
$$\beta^x(z)=\sum\beta^x_n z^{-n-1},\qquad \gamma^{x'}(z)=\sum\gamma^{x'}_n z^{-n}\in
\text{QO} (U(\gh)\otimes_{U(\gb)}\C)$$
have the properties
$$ [\beta^x_+(z),\gamma^{x'}(w)]=\bra x',x\ket(z-w)^{-1},~~~[\beta^x_-(z),\gamma^{x'}(w)]=\bra x',x\ket(w-z)^{-1}.$$
It follows that $(z-w)[\beta^x(z),\gamma^{x'}(w)]=0$. Moreover the $\beta^x(z)$ commute; likewise for the $\gamma^{x'}(z)$. Thus the $\beta^x(z),\gamma^{x'}(z)$ generate a vertex algebra $\cS = \cS(V)$.
This algebra was introduced in \cite{FMS}, and is known as a $\beta\gamma$-system or semi-infinite symmetric algebra. Finally, $\beta^x,\gamma^{x'}$ satisfy the OPE relation
$$ \beta^x(z)\gamma^{x'}(w)\sim\bra x',x\ket(z-w)^{-1}. $$
By the Poincar\'e-Birkhoff-Witt Theorem, the vector space $U(\gh)\otimes_{U(\gb)}\C$ has the structure of a polynomial algebra
with generators $\beta^x_n,\gamma^{x'}_{n+1}$, $n<0$, which are linear in $x\in V$ and $x'\in V^*$.

We can also regard $V\oplus V^*$ as an odd abelian Lie (super) algebra, and consider its loop algebra and a one-dimensional central extension by $\C\tau$ with bracket
$$[(x,x')t^n,(y,y')t^m]=(\bra y',x\ket+\bra x',y\ket)\delta_{n+m,0}\tau.$$
Call this Lie algebra $\gj$, and form the induced module $U(\gj)\otimes_{U(\ga)}\C$. Here $\ga$ is the subalgebra of $\gj$ generated by $\tau$, $(x,0)t^n$, $(0,x')t^{n+1}$, for $n\geq 0$, and $\C$ is the one-dimensional $\ga$-module on
which $(x,0)t^n$, $(0,x')t^{n+1}$ act trivially and $\tau$ acts by $1$. Then there is a vertex algebra $\cE = \cE(V)$, analogous to
$\cS$, and generated by odd vertex operators $b^x(z) = \sum_{n\in \mathbb{Z}} b^x_n z^{-n-1}$ and $c^{x'}(z) = \sum_{n\in \mathbb{Z}} c^{x'}_n z^{-n}$ in $\text{QO}(U(\gj)\otimes_{U(\ga)}\C)$, with OPE relations
$$b^x(z)c^{x'}(w)\sim\bra x',x\ket(z-w)^{-1}.$$
This vertex algebra is known as a $bc$-system, or a semi-infinite exterior algebra. The vector space $U(\gj)\otimes_{U(\ga)}\C$ has the structure of an odd polynomial algebra with generators $b^x_n ,c^{x'}_{n+1}$, $n<0$, which are linear in $x\in V$ and $x'\in V^*$.

The $bc\beta\gamma$-system of rank $n$ is just $\cE\otimes \cS$. Often, we fix a basis for $V$ and a dual basis for $V^*$, so that the generators of $\cE\otimes \cS$ are $b^i, c^i, \beta^i, \gamma^i$ satisfying the nontrivial OPEs $\beta^i(z) \gamma^i(w) \sim \delta_{i,j}(z-w)^{-1}$ and $b^i(z) c^j(w) \sim \delta_{i,j}(z-w)^{-1}$.
\end{example}

\section{The chiral de Rham complex}\label{sect:cdr}
The chiral de Rham complex $\Omega^{\text{ch}}_Z$ is a sheaf of vertex algebras on any smooth manifold $Z$ in either the algebraic, complex, or smooth categories, which was introduced by Malikov, Schechtman, and Vaintrob \cite{MSV1}. In this paper we work exclusively in the smooth category, and we will use the formulation in \cite{LL} which is equivalent but more convenient for our purposes. In fact, the smooth chiral de Rham complex is not quite a sheaf, but it is a {\it weak} sheaf in the terminology of \cite{LLS}. This means that the reconstruction axiom holds only for finite open covers. However, it is graded by conformal weight and each weighted subspace is an ordinary sheaf. Since we always work inside a fixed weighted component, this does not cause problems and we shall drop the word \lq\lq weak" throughout this paper. 

For a coordinate open set $U\subset \mathbb{R}^n$ with coordinate functions $\gamma^i$, $i=1,\dots, n$, the algebra of sections $\Omega^{\text{ch}}(U)$ has odd generators $b^i(z) = \sum_{n\in \mathbb{Z}} b^i_n z^{-n-1}$ and $c^i(z) = \sum_{n\in \mathbb{Z}} c^i_n z^{-n}$, even generators $\beta^i(z) = \sum_{n\in \mathbb{Z}} \beta^i_n z^{-n-1}$, as well as an even generator $f(z) = \sum_{n\in \mathbb{Z}} f_n z^{-n}$ for every smooth function $f = f(\gamma^1,\dots, \gamma^n) \in C^{\infty}(U)$. The element $\beta^i$ corresponds to the vector field $\frac{\partial}{\partial \gamma^i}$, $c^i$ corresponds to the one-form $d\gamma^i$, and $b^i$ corresponds to the contraction operator along $\frac{\partial}{\partial \gamma^i}$. The fields $f$ commute with $b^i$ and $c^i$, and satisfy $$\beta^i(z) f(w) \sim \frac{\partial f}{\partial \gamma^i}(w)(z-w)^{-1},$$ which generalizes the formula $\beta^i(z)\gamma^j(w) \sim \delta_{i,j}(z-w)^{-1}$. These OPE relations define a Lie conformal algebra \cite{K}, and $\Omega^{\text{ch}}(U)$ is defined as the quotient of the corresponding universal enveloping vertex algebra by the ideal generated by
\begin{equation} \label{cdrrelations} \partial f - \sum_{i=1}^n :\frac{\partial f}{\partial x^i} \partial \gamma^i:,\qquad :fg: \ - fg, \qquad 1- \text{id}.\end{equation}
A typical element of $\Omega^{\text{ch}}(U)$ is a linear combination of fields of form
$$ :f \partial^{a_1} b^{i_1} \cdots \partial^{a_r} b^{i_r} \partial^{d_1} c^{j_1} \cdots \partial^{d_s} c^{j_s} \partial^{e_1} \beta^{k_1} \cdots \partial^{e_t} \beta^{k_t} \partial^{m_1} \gamma^{l_1} \cdots \partial^{m_u} \gamma^{l_u}:,$$ where $a_i, d_i, e_i \geq 0$ and $m_i\geq 1$.

Now consider a smooth change of coordinates $g:U \ra U'$, $$\tilde{\gamma}^i = g^i(\gamma) = g^i(\gamma^1,\dots, \gamma^n),\qquad \gamma^i = f^i(\tilde{\gamma})= f^i(\tilde{\gamma}^1,\dots, \tilde{\gamma}^n).$$ We get the following transformation rules:
\begin{equation} \begin{split} \label{cdrlocaltransform} \tilde{c}^i = \ :\frac{\partial g^i}{\partial \gamma^j} c^j:, \qquad  \tilde{b}^i = \ :\frac{\partial f^j}{\partial \tilde{\gamma}^i} (g(\gamma)) b^j:,\\ \tilde{\beta}^i =\ :\beta^j \frac{\partial f^j}{\partial \tilde{\gamma}^i}(g(\gamma)): + :\frac{\partial^2 f^k}{\partial \tilde{\gamma}^i \partial \tilde{\gamma}^l} (g(\gamma)) \frac{\partial g^l}{\partial \gamma^r} c^r b^k:.\end{split} \end{equation}
These new fields satisfy OPE relations
$$\tilde{b}^i(z) \tilde{c}^j(w) \sim \delta_{i,j}(z-w)^{-1},\qquad \tilde{\beta}^i(z) \tilde{f}(w) \sim \frac{\partial \tilde{f}}{\partial \tilde{\gamma}^i} (z-w)^{-1}.$$ Here $\tilde{f} = \tilde{f}(\tilde{\gamma}^1,\dots, \tilde{\gamma}^n)$ is any smooth function. Therefore $g:U \ra U'$ induces a vertex algebra isomorphism $ \phi_g: \Omega^{\text{ch}}(U) \rightarrow \Omega^{\text{ch}}(U')$. Moreover, given diffeomorphisms of open sets $U_1 \xrightarrow{g} U_2 \xrightarrow{h}U_3$, we get
$\phi_{h \circ g}=\phi_{g} \circ \phi_{h}$. This allows us to define the sheaf $\Omega^{\text{ch}}_Z$ on any smooth manifold $Z$, using standard arguments of formal geometry \cite{GK}.

Consider the following locally defined fields
\begin{equation} J = \sum_{i=1}^n :b^i c^i:,\qquad Q = \sum_{i=1}^n :\beta^i c^i:,\qquad G = \sum_{i=1}^n :b^i \partial\gamma^i:, \qquad L = \sum_{i=1}^n :\beta^i \partial \gamma^i: - :b^i \partial c^i:.\end{equation} These satisfy the OPE relations of a topological vertex algebra of rank $n$ \cite{LZ1}.
\begin{equation}\begin{split} 
L(z) \sim L(w) \sim 2L(w)(z-w)^{-2} + \partial L(w)(z-w)^{-1},\\
J(z) J(w) \sim - n (z-w)^{-2},\\
L(z) J(w) \sim -n (z-w)^{-3} + J(w) (z-w)^{-2} +  \partial J(w) (z-w)^{-1}, \\
G(z) G(w) \sim 0,\\ L(z) G(w) \sim  2G(w) (z-w)^{-2} +  \partial G(w) (z-w)^{-1}, \\ J(z) G(w) \sim -G(w) (z-w)^{-1},\\
Q(z) Q(w) \sim 0,\\  L(z) Q(w) \sim  Q(w) (z-w)^{-2} +  \partial Q(w) (z-w)^{-1}, \\ J(z) Q(w) \sim Q(w) (z-w)^{-1},\\
Q(z) G(w) \sim n (z-w)^{-3} + J(w) (z-w)^{-2} + L(w)(z-w)^{-1}.
\end{split}
\end{equation}
Under $g:U \ra U'$, these fields transform as follows:
\begin{equation} \begin{split} 
\tilde{L} =  L,\qquad \tilde{G} = G,\\
\tilde{J} = J + \partial \bigg(\text{Tr}\ \text{log}\bigg(\frac{\partial g^i}{\partial b^j}\bigg)\bigg),\qquad 
\tilde{Q} = Q +  \partial \bigg( \frac{\partial}{\partial \tilde{b}^r} \bigg(\text{Tr}\ \text{log} \bigg( \frac{\partial f^i}{\partial \tilde{b}^j} \bigg) \bigg) \tilde{c}^r \bigg),
\end{split}
\end{equation}
Therefore $L$ and $G$ are globally defined on any manifold $Z$. Although $J$ and $Q$ are not globally defined in general, the operators $J_0$ and $Q_0$ are well-defined. Note that $\Omega^{\text{ch}}(Z)$ has a bigrading by degree and weight, where the weight is the eigenvalue of $L_0$ and degree is the eigenvalue of $J_0$. Also, $Q_0$ is a square-zero operator and we define the differential $D$ to be $Q_0$. It is derivation of all circle products $\circ_k$, $k\in \mathbb{Z}$, and it coincides with the de Rham differential at weight zero. Note that $G_0$ is a contracting homotopy for $D$ in the sense that $[D,G_0] = L_0$. This shows that the cohomology $H^*(\Omega^{\text{ch}}(Z),D)$ vanishes beyond weight zero. Also, note that each $f$ has weight $0$ and degree $0$, $c^i$ has weight $0$ and degree $1$, $b^i$ has weight $1$ and degree $-1$, and $\beta^i$ has weight $1$ and degree $0$. Therefore the weight zero component of $\Omega^{\text{ch}}(Z)$ is just $\Omega(Z)$, and the embedding $\Omega(Z) \hookrightarrow \Omega^{\text{ch}}(Z)$ is a quasi-isomorphism.

\subsection{Commutant and orbifold subsheaves}

Given a global section $\tau\in \Omega^{\text{ch}}(Z)$, for each open set $U\subset Z$, let $\text{Com}(\tau |_U, \Omega^{\text{ch}}(U))$ denote the commutant of $\tau |_U$ inside $\Omega^{\text{ch}}(U)$. By Lemma 3.7 of \cite{LLS}, the commutant condition is local, so $U \mapsto \text{Com}(\tau |_U, \Omega^{\text{ch}}(U))$ defines a sheaf of vertex algebras on $Z$. This means in particular that an element $\omega \in \Omega^{\text{ch}}(Z)$ lies in $\text{Com}(\tau, \Omega^{\text{ch}}(Z))$ if and only if $\omega |_{U} \in \text{Com}(\tau |_U, \Omega^{\text{ch}}(U))$ for all $U\subset Z$. 

We shall also need an analogous result for orbifolds. Suppose that $G$ is a compact, connected Lie group acting on $Z$ by diffeomorphisms. Then $G$ acts on $\Omega^{\text{ch}}(Z)$ by vertex algebra automorphisms and $\Omega^{\text{ch}}(Z)^{G}$ is a vertex subalgebra. As shown in \cite{LL}, the action of $G$ induces a vertex algebra homomorphism $V_0(\mathfrak{g}) \rightarrow \Omega^{\text{ch}}(Z)$, where $V_0(\mathfrak{g})$ denotes the level zero affine vertex algebra of the Lie algebra $\mathfrak{g}$ of $G$. The generators of $V_0(\mathfrak{g})$ are $L_{\xi}(z)$, which are linear in $\xi \in \mathfrak{g}$ and satisfy
$$L_{\xi}(z) L_{\eta}(w) \sim L_{[\xi, \eta]}(w)(z-w)^{-1}.$$ Since $G$ is connected, we may identify $\Omega^{\text{ch}}(Z)^G$ with the invariant space $\Omega^{\text{ch}}(Z)^{\gg}$, which is just the joint kernel of $\{(L_{\xi})_0|\ \xi\in\mathfrak{g}\}$. Similarly, if $U\subset Z$ is a $G$-invariant open set, $\Omega^{\text{ch}}(U)^G$ coincides with $\Omega^{\text{ch}}(U)^{\gg}$, which is the joint kernel of $\{(L_{\xi}|_U)_0|\ \xi\in\mathfrak{g}\}$. If $U$ is an open set which is not $G$-invariant, $G$ does not act on $\Omega^{\text{ch}}(U)$. However, each $(L_{\xi}|_U)_0$ acts on $\Omega^{\text{ch}}(U)$ since for each point $x\in U$, there is a neighborhood $V$ of the identity element of $G$ such that for each $g\in V$, $g\cdot x \in U$. Therefore $\Omega^{\text{ch}}(U)^{\mathfrak{g}}$ is a well-defined vertex algebra.

\begin{lemma} \label{orbifoldsheaf} If $Z$ carries an action of a connected, compact Lie group $G$, the assignment $U \mapsto \Omega^{\text{ch}}(U)^{\mathfrak{g}}$ defines a sheaf of vertex algebras on $Z$.\end{lemma}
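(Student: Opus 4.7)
My plan is to reduce the sheaf axioms for $U\mapsto \Omega^{\text{ch}}(U)^{\mathfrak{g}}$ to the (weak) sheaf axioms already established for $\Omega^{\text{ch}}_Z$. The key observation is that for each fixed $\xi\in\mathfrak{g}$, the invariance condition $(L_\xi|_U)_0\omega = 0$ is a local condition on $U$: by naturality of the mode operations under restriction in $\Omega^{\text{ch}}_Z$, whenever $V\subset U$ is open one has
\[ \bigl((L_\xi|_U)_0\omega\bigr)\big|_V \;=\; (L_\xi|_V)_0(\omega|_V), \]
so the equation $(L_\xi|_U)_0\omega = 0$ holds in $\Omega^{\text{ch}}(U)$ if and only if its restriction to each member of a finite open cover of $U$ vanishes, by the separation axiom for the weak sheaf $\Omega^{\text{ch}}_Z$.

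From this locality principle, both sheaf axioms follow mechanically. For restriction, if $\omega\in\Omega^{\text{ch}}(U)^{\mathfrak{g}}$ and $V\subset U$, the displayed identity immediately gives $(L_\xi|_V)_0(\omega|_V) = 0$ for every $\xi$, hence $\omega|_V\in\Omega^{\text{ch}}(V)^{\mathfrak{g}}$. For gluing, suppose $\{U_i\}$ is a finite open cover of $U$ and that $\omega_i\in\Omega^{\text{ch}}(U_i)^{\mathfrak{g}}$ agree on overlaps; the weak sheaf property of $\Omega^{\text{ch}}_Z$ produces a unique $\omega\in\Omega^{\text{ch}}(U)$ with $\omega|_{U_i} = \omega_i$, and for any $\xi$ the restriction of $(L_\xi|_U)_0\omega$ to $U_i$ equals $(L_\xi|_{U_i})_0\omega_i = 0$, so locality forces $(L_\xi|_U)_0\omega = 0$ and $\omega\in\Omega^{\text{ch}}(U)^{\mathfrak{g}}$.

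To upgrade this to a sheaf of vertex algebras I still need to verify that $\Omega^{\text{ch}}(U)^{\mathfrak{g}}$ is actually a vertex subalgebra of $\Omega^{\text{ch}}(U)$, but this is immediate: $(L_\xi|_U)_0$ is the zero mode of a field and therefore a derivation of every circle product $\circ_n$, so its joint kernel is closed under all circle products and contains the vacuum. The only conceptual point needing care is ensuring that the mode operations really are compatible with the restriction maps of $\Omega^{\text{ch}}_Z$, which I expect to follow formally from the fact that $\Omega^{\text{ch}}_Z$ is defined as a sheaf of vertex algebras and that the section $L_\xi$ is defined globally via the homomorphism $V_0(\mathfrak{g})\to\Omega^{\text{ch}}(Z)$. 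Note that connectedness of $G$ plays no role in the lemma itself; it is needed only for the identification $\Omega^{\text{ch}}(U)^{\mathfrak{g}} = \Omega^{\text{ch}}(U)^G$ on $G$-stable opens, which the paper has already recorded separately.
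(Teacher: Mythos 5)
Your proposal is correct and follows essentially the same route as the paper: the whole point is that the invariance condition $(L_\xi|_U)_0\,\omega=0$ is local, which holds because the restriction maps are vertex algebra homomorphisms (so zero-mode actions commute with restriction), and the sheaf axioms then descend from those of $\Omega^{\text{ch}}_Z$. Your additional checks (gluing spelled out, the joint kernel being a vertex subalgebra since zero modes are derivations of all circle products) are details the paper records just before or leaves implicit in the same argument.
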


\begin{proof}
It is enough to show that being in the joint kernel of $\{(L_{\xi}|_U)_0|\ \xi\in\mathfrak{g}\}$ is a local condition. In other words, $a \in\Omega^{\text{ch}}(U)$ lies in this kernel if and only if for every $x\in U$, there is a neighborhood $V\ni x$ in $U$ such that $(L_{\xi}|_V)_0 (a|_V) = 0$. But this is clear from the fact that restriction maps are vertex algebra homomorphisms. \end{proof}

\subsection{Coordinate-free description} 
For any open set $U\subset Z$, recall that $f\in C^{\infty}(U)$ and $\omega \in \Omega^1(U)$ can be regarded as sections of $\Omega^{\text{ch}}(U)$ of weight zero and degrees $0$ and $1$, respectively. Additionally, given a vector field $X \in \text{Vect}(U)$, there are sections $$\iota_X(z) = \sum_{n\in \mathbb{Z}} (\iota_X)_n z^{-n-1},\qquad L_X(z) = \sum_{n\in \mathbb{Z}} (L_X)_n z^{-n-1}$$ in $\Omega^{\text{ch}}(U)$ of weight $1$ and degrees $-1$ and $0$, respectively. The local description of $\iota_X$ and $L_X$ appear in \cite{LL}. Let $\gamma^1,\dots, \gamma^n$ be local coordinates and $X = \sum_{i=1}^n f_i  \frac{\partial}{\partial \gamma^i}$ where $f_i = f_i (\gamma^1,\dots, \gamma^n)$ is a smooth function. Then $$ \iota_X = \sum_{i=1}^n :f_i b^i:,\qquad L_X = D(\iota_X) = \sum_{i=1}^n : \beta^i f_i: + \sum_{i=1}^n \sum_{j=1}^n : \frac{\partial f_j}{\partial \gamma^i} c^i b^j:.$$ The next theorem\footnote{The coordinate-free description of the relations is due to Bailin Song, and we thank him for sharing this result with us.} gives a useful coordinate-independent description of $\Omega^{\text{ch}}(U)$ when $U$ is a coordinate open set.

\begin{theorem}  For a coordinate open set $U\subset \mathbb{R}^n$, $\Omega^{\text{ch}}(U)$ is strongly generated by the following fields: \begin{equation} \label{coordinateindep} f \in  C^{\infty}(U),\qquad \omega \in \Omega^1(U),\qquad \{L_X, \iota_X|\ X\in \text{Vect}(U)\}.\end{equation} These satisfy the following OPE relations.
\begin{equation}\label{opecdr} \begin{split}
\iota_X(z) \iota_Y(w) \sim 0,\\
L_X(z) \iota_Y(w) \sim \iota_{[X,Y]}(w)(z-w)^{-1},\\
L_X(z) L_Y(w) \sim L_{[X,Y]}(w)(z-w)^{-1},\\
L_X(z) \omega(w) \sim \text{Lie}_X(\omega)(w)(z-w)^{-1},\\
\iota_X(z) \omega(w) \sim \iota_X(\omega)(w)(z-w)^{-1},\\
L_X(z) f(w) \sim X(f)(w)(z-w)^{-1},\\
\iota_X(z) f(w) \sim 0.
\end{split}\end{equation}
The ideal of relations among these generators is generated by the following elements.
\begin{multline}\label{cirelations} 
1-id,\qquad  :f g: - fg, \qquad  :\nu \omega: -  \nu\omega , \qquad f,g \in C^{\infty}(U), \qquad  \nu,\omega \in \Omega^1(U),\\
\iota_{gX}-: g\iota_X:,\qquad L_{gX}-: (dg) \iota_X:-:gL_X:, \qquad g \in C^{\infty}(U),\qquad X \in \text{Vect}(U),\\
\partial g(\phi_1,\dots,\phi_n)-\sum_{i=1}^n  \frac{\partial g}{\partial{\phi_i}} \partial \phi_i ,\qquad g\in C^{\infty}(\mathbb{R}^n),\qquad \phi_i \in C^{\infty}(U).
\end{multline}
\end{theorem}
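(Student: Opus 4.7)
The plan is to use the MSV presentation already recalled in the text as a stepping stone, and to construct mutually inverse vertex algebra maps between $\Omega^{\text{ch}}(U)$ and the vertex algebra $V$ freely generated by \eqref{coordinateindep} subject to \eqref{opecdr} and \eqref{cirelations}. Define the forward map $\Phi : V \to \Omega^{\text{ch}}(U)$ by sending each new generator to its local realization from the text: $f \mapsto f$, $\omega = \sum_i g_i\, d\gamma^i \mapsto \sum_i :g_i c^i:$, $\iota_X \mapsto \sum_i :f_i b^i:$, and $L_X \mapsto D(\iota_X)$ for $X = \sum_i f_i \partial_{\gamma^i}$. Well-definedness of $\Phi$ reduces to verifying the OPEs \eqref{opecdr} from the basic OPEs $\beta^i(z)\gamma^j(w) \sim \delta_{ij}(z-w)^{-1}$ and $b^i(z)c^j(w) \sim \delta_{ij}(z-w)^{-1}$ via the Wick formula, and checking that the image of each relation in \eqref{cirelations} vanishes in $\Omega^{\text{ch}}(U)$. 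Most of these are routine normal-ordering calculations; the chain-rule relation $\partial g(\phi_1, \dots, \phi_n) = \sum_i :\frac{\partial g}{\partial \phi_i} \partial \phi_i:$ generalizes the MSV relation in \eqref{cdrrelations}, and extends to arbitrary smooth $\phi_i \in C^\infty(U)$ by repeated use of the Leibniz rule for the Wick product together with the $\phi_i = \gamma^i$ base case already present in \eqref{cdrrelations}.

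For the inverse $\Psi : \Omega^{\text{ch}}(U) \to V$, I would use the MSV presentation and set $\Psi(b^i) = \iota_{\partial_{\gamma^i}}$, $\Psi(c^i) = d\gamma^i$, $\Psi(\beta^i) = L_{\partial_{\gamma^i}}$, and $\Psi(f) = f$ for $f \in C^\infty(U)$. The three MSV relations \eqref{cdrrelations} hold in $V$: $:fg: = fg$ and $1 = \mathrm{id}$ are explicitly in \eqref{cirelations}, while $\partial f = \sum_i :\frac{\partial f}{\partial \gamma^i}\partial\gamma^i:$ is the $\phi_i = \gamma^i$ specialization of the chain-rule relation. The pairwise OPEs among the images $\Psi(b^i), \Psi(c^i), \Psi(\beta^i), \Psi(f)$ are immediate from \eqref{opecdr} combined with the elementary identities $\iota_{\partial_{\gamma^i}}(d\gamma^j) = \delta_{ij}$, $[\partial_{\gamma^i},\partial_{\gamma^j}]=0$, and $\mathrm{Lie}_{\partial_{\gamma^i}}(d\gamma^j) = 0$. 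The composition $\Phi \circ \Psi$ is the identity on MSV generators by direct substitution, and $\Psi \circ \Phi$ is the identity on the new generators $\iota_X, L_X, \omega$ because their defining formulas sit in the ideal \eqref{cirelations}.

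The main obstacle is verifying that \eqref{cirelations} is complete, i.e. that no further relations are required for $\Phi$ to be an isomorphism. My plan is to filter both $V$ and $\Omega^{\text{ch}}(U)$ by conformal weight and argue weight by weight. At weight zero, the Wick identifications $:fg:=fg$ and $:\nu\omega: = \nu\wedge\omega$ cut the subalgebra of $V$ generated by $C^\infty(U)$ and $\Omega^1(U)$ down to the ordinary de Rham algebra $\Omega^*(U)$, matching the weight-zero component of $\Omega^{\text{ch}}(U)$. At positive weight, the $C^\infty(U)$-linearity relations $\iota_{gX} = :g\iota_X:$ and the Leibniz-type identity $L_{gX} = :(dg)\iota_X: + :gL_X:$ reduce every Wick monomial in the redundant family $\{L_X, \iota_X\}_{X \in \mathrm{Vect}(U)}$ to an MSV-style normal-ordered monomial in the finite set $b^i, c^i, \beta^i$ with a single smooth coefficient, and the chain-rule relation handles derivatives of composed smooth functions. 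Comparison with the MSV monomial basis of $\Omega^{\text{ch}}(U)$ in each fixed conformal weight then shows $\Phi$ is a weight-preserving isomorphism, completing the proof.
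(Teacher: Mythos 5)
Your proposal is correct and follows essentially the same route as the paper, which proves the theorem by the terse observation that the coordinate generators $f, b^i, c^i, \beta^i$ sit inside \eqref{coordinateindep} (so strong generation is immediate) and that the MSV relations \eqref{cdrrelations} are contained in, and imply, the list \eqref{cirelations}. Your explicit mutually inverse maps $\Phi,\Psi$ and the weight-filtration argument for completeness of the relations are simply a fleshed-out version of this same reduction to the MSV presentation.
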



\begin{proof} For a coordinate open set $U$ with coordinates $\gamma^i$ for $i=1,\dots, n$, \eqref{coordinateindep} is clearly a strong generating set for $\Omega^{\text{ch}}(U)$ since it contains the usual generators $f \in C^{\infty}(U), b^i, c^i, \beta^i$ as a subset. Similarly, the set of relations \eqref{cirelations} are all consequences of the set \eqref{cdrrelations}, which is a subset of \eqref{cirelations}.
\end{proof}

We shall call an open set $U\subset Z$ {\it small} if $\Omega^{\text{ch}}(U)$ has the strong generating set \eqref{coordinateindep}. We shall call an open cover $\{U_{\alpha}\}$ of $Z$ a {\it small open cover} if each $U_{\alpha}$ is small. Aside from coordinate open sets, there is one more type of small open set that will be important to us. These are of the form $U \times T^m$ where $U$ is a coordinate open set, and $T^m$ is a torus of rank $m$. The reason such a set is small is that if $y^1,\dots, y^m$ are coordinates on $T^m$ satisfying $y^i = y^i + 2\pi$, the corresponding fields $\partial y^i$, $c^i = d y^i$, $\beta^i$, and $b^i$, are globally defined. If $\pi: Z \ra M$ is a principal $S^1$-bundle, we shall often choose a trivializing open cover $\{V_{\alpha}\}$ for $M$ such that each $V_{\alpha}$ is a coordinate open set. Then $\{U_{\alpha} = \pi^{-1}(V_{\alpha})\}$ is a small open cover for $Z$, and each $U_{\alpha} \cong V_{\alpha} \times S^1$.

\begin{remark} The advantage of having a local but coordinate-independent description of a sheaf of vertex algebras on a manifold $M$ is the following principle. Given vertex algebra sheaves $\cA_M$ and $\cB_M$ on $M$, to specify a morphism $\cA_M \rightarrow \cB_M$, it is enough to give a vertex algebra homomorphism $\phi_{\alpha}: \cA(U_{\alpha}) \rightarrow \cB(U_{\alpha})$ such that $\phi_{\alpha}$ and $\phi_{\beta}$ agree on the overlap $U_{\alpha} \cap U_{\beta}$. If we have coordinate-independent generators and relations for $\cA_M$ and $\cB_M$, it is enough to show that OPEs among the generators are preserved and the ideal of relations is annihilated; the agreement on overlaps is then automatic. The same principle applies to morphisms of sheaves of modules over vertex algebras. \end{remark}

\begin{remark} If $Z$ has finite topological type, it is a theorem of Bailin Song \cite{Song} that {\it any} open set $U\subset Z$ is small. In particular, the global section algebra $\Omega^{\text{ch}}(Z)$ has the strong generating set \eqref{coordinateindep} and relations \eqref{cirelations}. \end{remark}

\subsection{Twisted chiral de Rham differential}
\begin{lemma}
Let $H\in \Omega^{2k+1}(Z)$ be a closed differential form of degree $2k+1$. We define the {\it twisted differential} $D_H$ on $\Omega^{\text{ch}}(Z)$ by $$D_H(\omega) = D(\omega) + :H\omega:,$$ which coincides with the classical twisted differential $d_H = d + H\wedge$ at weight zero. Then $D_H^2=0$.
\end{lemma}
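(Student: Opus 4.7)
The plan is to compute $D_H^2(\omega)$ directly and check that everything cancels. Expanding
\[
D_H^2(\omega) \;=\; D_H\bigl(D(\omega) + :H\omega:\bigr) \;=\; D^2(\omega) + D(:H\omega:) + :HD(\omega): + :H:H\omega::,
\]
the first summand vanishes since $D = Q_0$ is square-zero. For the middle two summands, I would use the fact, stated just before the lemma, that $D$ is a derivation of every circle product, and in particular of the Wick product $\circ_{-1}$. Because $D$ is odd (it shifts $J_0$-degree by one) and $|H| = 2k+1$ is odd, the signed derivation rule gives
\[
D(:H\omega:) \;=\; :D(H)\omega: + (-1)^{|D||H|}:HD(\omega): \;=\; :D(H)\omega: - :HD(\omega):.
\]
Since $H$ sits in weight zero and $D$ restricted to the weight-zero subspace agrees with the classical de Rham differential, closedness of $H$ gives $D(H) = dH = 0$. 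The middle two summands therefore cancel and we are reduced to showing $:H:H\omega:: = 0$.

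This last identity is the only nontrivial point, and the main obstacle. The plan is to invoke the quasi-associativity formula for the Wick product to move $:H:H\omega::$ into the form $::HH:\omega:$. Quasi-associativity produces a correction of the form
\[
::HH:\omega: - :H:H\omega:: \;=\; \sum_{n\geq 0} \tfrac{1}{(n+1)!}\bigl(1 + (-1)^{|H|^2}\bigr) :(\partial^{n+1}H)(H\circ_n \omega):,
\]
and since $|H|^2$ is odd the prefactor vanishes identically; this is precisely the point at which the hypothesis that $H$ has odd degree enters. It then remains to identify $:HH:$ with the ordinary wedge product $H\wedge H$. This follows because the weight-zero component of $\Omega^{\text{ch}}(Z)$ is a strictly commutative subalgebra under the Wick product: the OPE relations \eqref{opecdr} contain no singular terms among $C^{\infty}$-functions and $1$-forms, so all $\circ_n$ for $n\geq 0$ among weight-zero elements vanish, and the relations \eqref{cirelations} then identify $:HH:$ with $H\wedge H$. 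Graded commutativity gives $H\wedge H = 0$ for $H$ of odd degree, so $:H:H\omega:: = :(H\wedge H)\omega: = 0$ and $D_H^2 = 0$. All remaining work is bookkeeping with signs; the only subtle step is the quasi-associativity cancellation, which crucially requires the oddness of the degree of $H$.
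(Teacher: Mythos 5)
Your proposal is correct and follows essentially the same route as the paper's (very brief) proof: $D^2=0$, the odd-derivation property of $D$ together with $dH=0$ to cancel the cross terms, and the identity $:H(:H\omega:):\,=0$. The only difference is that the paper simply asserts this last identity, whereas you justify it via quasi-associativity (where the odd degree of $H$ kills the correction terms) and the vanishing of $:HH:\,=H\wedge H$ in the weight-zero subalgebra, which is a legitimate and complete justification.
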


\begin{proof}
$D_H$ is a square-zero operator since $D$ is a derivation, $H$ is $D$-closed, and also $:H (:H \omega:):\ = 0$ for any $\omega \in \Omega^{\text{ch}}(Z)$. 
\end{proof}

Since $H$ is odd and has weight zero, $D_H$ is odd and preserves conformal weight. Therefore it is homogeneous of degree $1$ with respect to the $\mathbb{Z}/2\mathbb{Z}$ grading obtaining by reducing the degree grading modulo $2$. We denote this $\mathbb{Z}/2\mathbb{Z}$-graded complex by $(\Omega^{\text{ch}, \bullet}(Z), D_H)$. Note that $D_H$ is not a vertex algebra derivation of $\Omega^{\text{ch}, \bullet}(Z)$, just as $d_H$ is not a derivation of $\Omega^{\bullet}(Z)$. However, if we regard $\Omega^{\text{ch}, \bullet}(Z)$ not as a vertex algebra but as a module over itself with generator the vacuum vector, then $D_H$ is a derivation in the category of modules. In other words, \begin{equation} \label{module derivation} D_H(a \circ_k b) = D(a) \circ_k b + (-1)^{|a|} a \circ_k D_H(b),\qquad k\in \mathbb{Z}.\end{equation} Moreover, $H^{\bullet}(\Omega^{\text{ch}}(Z), D_H)$ has a well-defined $\mathbb{Z}/2\mathbb{Z}$ grading, and at weight zero it coincides with $H^{\bullet}(Z,H) = H^{\bullet}(\Omega(Z), d_H)$.

\begin{theorem} \label{thm:vanishing} $H^{\bullet} (\Omega^{\text{ch}}(Z), D_H)$ vanishes in positive weight. Therefore the inclusion $(\Omega^{\bullet}(Z), d_H) \hookrightarrow (\Omega^{\text{ch}, \bullet}(Z), D_H)$ is a quasi-isomorphism.
\end{theorem}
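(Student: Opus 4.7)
My approach is to run a spectral sequence on $(\Omega^{\text{ch}}(Z)_n, D_H)$ at each fixed conformal weight $n$, filtering by the $J_0$-eigenvalue (form degree), and to import the already-established untwisted positive-weight vanishing as the input. The decomposition $D_H = D + M_H$, where $M_H\colon \omega \mapsto {:}H\omega{:}$ denotes left Wick-multiplication by $H$, is homogeneous with respect to the $J_0$-grading: $D$ raises $J_0$-degree by $+1$ and $M_H$ raises it by $+3$, since $H$ is a $3$-form. The relations $\{D, M_H\} = 0$ (because $H$ is $D$-closed and odd) and $M_H^2 = 0$ (because $H \wedge H$ vanishes for an odd form by graded commutativity of the $c^i$'s) give $D_H^2 = 0$ and set the stage for the spectral sequence.

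The first technical step is to show that at a fixed weight $n$ the $J_0$-grading is concentrated in a bounded range of degrees. Inspecting the monomial strong generating set on a small open set, a weight-$n$ element contains at most $n$ factors $b^i$ (each of weight $1$ and $J_0$-degree $-1$), and the number of $c^j$-type factors is bounded by a finite function of $n$ and $\dim Z$, using anticommutativity together with the fact that the derivative orders $d_j \geq 0$ contribute $\sum d_j$ to the weight, leaving only finitely many admissible multi-labels. Hence the $J_0$-degree at weight $n$ lies in a finite interval $[-n, c_n]$; this is a local bound and lifts to the global sections. I then introduce the decreasing filtration $F^p \Omega^{\text{ch}}(Z)_n = \bigoplus_{q \geq p} \Omega^{\text{ch}, q}(Z)_n$, which is preserved by $D_H$ and bounded at each weight. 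In the associated spectral sequence, $d_0 = 0$ because $D_H$ strictly raises $J_0$-degree, $E_1$ is $\Omega^{\text{ch}}(Z)_n$ as a $J_0$-graded space, and $d_1 = D$ (the exactly-$+1$ piece of $D_H$). Consequently $E_2 = H^{\ast}(\Omega^{\text{ch}}(Z)_n, D)$, which vanishes for $n \geq 1$ by the contracting homotopy $G_0$. Strong convergence of the bounded filtration yields $H^{\bullet}(\Omega^{\text{ch}}(Z)_n, D_H) = 0$ for $n \geq 1$.

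For the quasi-isomorphism statement, the weight-zero component of $(\Omega^{\text{ch}, \bullet}(Z), D_H)$ coincides tautologically with $(\Omega^{\bullet}(Z), d_H)$: the weight-zero subspace of $\Omega^{\text{ch}}(Z)$ is $\Omega^{\ast}(Z)$, the operator $D$ restricts to the de Rham differential $d$, and ${:}H\omega{:}$ reduces to $H \wedge \omega$ when $\omega$ has weight zero. Combined with the positive-weight vanishing, this identifies the cohomology of the inclusion on both sides and gives the desired quasi-isomorphism.

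I expect the main subtle point to be the boundedness of the $J_0$-grading at each weight, particularly the upper bound on the number of $c^j$-type factors; once this bound is established the spectral sequence argument is formal and converges strongly, and the $\mathbb{Z}/2\mathbb{Z}$-graded statement of the theorem follows by reducing the $\mathbb{Z}$-graded conclusion modulo $2$. A direct downward induction on the top $J_0$-degree is also possible, but the spectral sequence language handles most cleanly the fact that $M_H$ shifts $J_0$-degree by $3$ rather than $1$, so that $M_H$ never contributes to $d_1$.
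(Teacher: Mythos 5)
Your proposal is correct and is essentially the paper's own argument in spectral-sequence packaging: both rest on the boundedness of the $J_0$-degree within each fixed conformal weight, the decomposition of $D_H$ into $D$ (raising degree by $1$) plus left multiplication by $H$ (raising degree by $3$), and the contracting homotopy $G_0$ with $[D,G_0]=L_0$ supplying the untwisted positive-weight vanishing. The paper simply runs the same filtration-by-degree argument by hand, choosing a representative $a=\sum_{i\geq k}a^i$ with $k$ maximal and cancelling the bottom piece via $b=\tfrac{1}{w}G_0(a^k)$, which is exactly your $E_2$-vanishing and convergence step without the spectral-sequence formalism.
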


\begin{proof} Let $A$ be a cohomology class of weight $w>0$ with respect to $D_H$. From the local description of $\Omega^{\text{ch}}(Z)$, it is clear that the weight-homogeneous subspaces of $\Omega^{\text{ch}}(Z)$ have bounded degrees both from above and below. Any representative of $A$ has the form $$a = \sum_{i\geq k} a^i$$ for some $k$, where $a^i$ has weight $w$ and degree $i$. We may choose our representative so that $k$ is as large as possible. By degree considerations, $a^k$ must be $D$-closed. Since $w>0$, $$b = \frac{1}{w} G_0(a^k) \in \Omega^{\text{ch}}(Z)$$ has degree $k-1$ and weight $w$, and satisfies $D(b) = a^k$. Note that $a - D_H(b)$ represents the class $A$, but the piece in degree $k$ has been canceled. Since $k$ was maximal, we must have $a - D_H(b) = 0$. \end{proof}

One of the difficult features of the chiral de Rham complex is that it is not functorial since it is built from both differential forms and vector fields. In the years after it was introduced, alternative formulations have been given in the language of Courant algebroids \cite{Bressler,BHS,Heluani1}, which explains the nonclassical transformation formula for $\beta^i$ appearing in \eqref{cdrlocaltransform}. From our point of view, regarding $\Omega^{\text{ch}}_Z$ as a structure naturally associated to a Courant algebroid endows it with enough functorial properties to formulate a version of T-duality. From here on, we assume that the degree of $H$ is equal to $3$.

\section{Courant algebroids and vertex algebra sheaves}\label{sect:courantsheaves}
An important result of Heluani (Proposition 4.6 of \cite{Heluani1}) is that for any Courant algebroid $E \ra Z$ there is an associated sheaf $\cU^E$ of vertex algebras on $Z$. In Heluani's formalism, $\cU^E$ is a sheaf of $N=1$ SUSY vertex algebras, but we shall only need the underlying ordinary vertex algebra structure. The bracket relations in $E$ give rise to a sheaf of Lie conformal algebras on $Z$, and $\cU^E$ is the quotient of the corresponding sheaf of universal enveloping vertex algebras by a certain sheaf of vertex algebra ideals. Locally, the generators and relations are the analogues of \eqref{coordinateindep} and \eqref{cirelations}. For any $E$, and any open set $U\subset Z$, there are inclusions $$i: C^{\infty}(U) \ra \cU^E(U),\qquad j: E \ra   \cU^E(U)$$ satisfying OPE relations coming from $\langle \cdot , \cdot \rangle$, $[\cdot ,\cdot]$, and $\pi$. If $E$ is the untwisted Courant algebroid $TZ \oplus T^*Z$, $\cU^E$ is just the chiral de Rham complex. The assignment $E \mapsto \cU^E$ is functorial in an obvious way: morphisms of Courant algebroids give rise to morphisms of the corresponding vertex algebra sheaves. For example, suppose that $\iota\colon S\longhookrightarrow Z$ is a totally geodesic inclusion of Riemannian manifolds, as in \S\ref{sect:courant}. The morphism $R_\iota: TZ\oplus T^*Z \longrightarrow TS\oplus T^*S$ of standard Courant algebroids give rise to a morphism $R^{\text{ch}}_\iota:\Omega^{\text{ch}}_Z \ra \iota_*(\Omega^{\text{ch}}_S)$ of vertex algebra sheaves on $Z$. In particular we have a homomorphism of global section algebras $\Omega^{\text{ch}}(Z) \ra \Omega^{\text{ch}}(S)$ which extends the restriction map of differential forms at weight zero.

\subsection{Exact Courant algebroids} Let $E$ be an exact Courant algebroid. Choose a splitting $E \cong TZ \oplus T^*Z$ and a closed $3$-form $H \in \Omega^3(Z,\mathbb{R})$ representing the cohomology class of $E$, so that the bracket is given by \eqref{twistedbracket}. Because of its close connection to $\Omega^{\text{ch}}_Z$, we shall denote the sheaf $\cU^E$ by $\Omega^{\text{ch}, H}_Z$, and we denote the algebra of sections over $U\subset Z$ by $\Omega^{\text{ch},H}(U)$. We call it the {\it $H$-twisted chiral de Rham complex} of $Z$. For any covering $\{U_{\alpha}\}$ of $Z$ by small open sets, $\Omega^{\text{ch},H}(U_{\alpha})$ is strongly generated by \begin{equation} \label{twistedgenerators} \tilde{f} \in C^\infty(U_{\alpha}), \qquad \tilde{\omega} \in\Omega^1(U_{\alpha}),\qquad  \{\tilde{L}_X, \tilde{\iota}_X|\ X\in \text{Vect}(U_{\alpha})\}.\end{equation} Here the tilde notation is used to distinguish these fields from the corresponding generators of $\Omega^{\text{ch}}(U_{\alpha})$. These generators satisfy the following OPEs, which are the twisted analogues of \eqref{opecdr}.
\begin{equation} \label{opecdrtwist}
\begin{split}
\tilde{\iota}_X(z) \tilde{\iota}_Y(w) \sim 0,\\
\tilde{L}_X(z) \tilde{\iota}_Y(w) \sim \big(\tilde{\iota}_{[X,Y]}(w) + (\widetilde{\iota_X \iota_Y H})(w)\big)(z-w)^{-1},\\
\tilde{L}_X(z) \tilde{L}_Y(w) \sim \big(\tilde{L}_{[X,Y]}(w) +( \widetilde{D \iota_X \iota_Y H})(w)\big)(z-w)^{-1},\\
\tilde{L}_X(z) \tilde{\omega}(w) \sim \widetilde{\text{Lie}_X(\omega)}(w)(z-w)^{-1},\\
\tilde{\iota}_X(z) \tilde{\omega}(w) \sim \widetilde{\iota_X(\omega)}(w)(z-w)^{-1},\\
\tilde{L}_X(z) \tilde{f}(w) \sim \widetilde{X(f)}(w)(z-w)^{-1},\\
\tilde{\iota}_X(z) \tilde{f}(w) \sim 0.
\end{split}
\end{equation}
The ideal of relations among these fields has the same generating set \eqref{cirelations} as the untwisted case, where each field is replaced by the tilde version. Even though the exact Courant algebroids $E$ are classified by $H^3(Z,\mathbb{R})$, it turns out that the corresponding vertex algebra sheaves are all isomorphic to the {\it untwisted} chiral de Rham sheaf.

\begin{theorem}[Untwisting trick] \label{untwisting} Let $\{U_{\alpha}\}$ be a small open cover of $Z$. Define a map $\Omega^{\text{ch}}(U_{\alpha})\ra \Omega^{\text{ch},H}(U_{\alpha})$ by \begin{equation} \label{eqn:untwisting} \iota_X \mapsto \tilde{\iota}_X,\qquad L_X \mapsto \tilde{L}_X - \widetilde{\iota_X H},\qquad f \mapsto \tilde{f},\qquad \omega \mapsto \tilde{\omega}.\end{equation}This is an isomorphism of vertex algebras which is well-defined on overlaps $U_{\alpha}\cap U_{\beta}$, so it is independent of the choice of cover and defines a sheaf isomorphism $\Omega^{\text{ch}}_Z \cong \Omega^{\text{ch},H}_Z$.
\end{theorem}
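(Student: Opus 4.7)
The plan is to verify that \eqref{eqn:untwisting} is a vertex algebra homomorphism by the principle emphasized in the discussion around the generators/relations presentation: it suffices to check that (i) the singular OPEs among the generators listed in \eqref{coordinateindep} are preserved and (ii) every relation in \eqref{cirelations} is annihilated. For the function/form part of the OPEs involving $\iota$ only, the correction term $\widetilde{\iota_X H}$ is a weight-$0$ element and so contributes nothing to the singular part of an OPE with any other weight-$0$ field. Thus the pairings $\iota$-$\iota$, $\iota$-$\omega$, $\iota$-$f$, $L_X$-$\omega$, and $L_X$-$f$ transport immediately from \eqref{opecdr} to the corresponding relations in $\Omega^{\text{ch},H}$.

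The substantive checks are the $L_X$-$\iota_Y$ and $L_X$-$L_Y$ OPEs, which each produce extra \emph{twist terms} on the $\Omega^{\text{ch},H}$-side of \eqref{opecdrtwist} that have to be exactly cancelled by the $-\widetilde{\iota_X H}$ corrections. First I would extend the OPE formulas in \eqref{opecdrtwist} from 1-forms to arbitrary forms using the Wick-product realization of $\Omega^{\bullet}(U_\alpha) \subset \Omega^{\text{ch},H}(U_\alpha)$ together with the module-derivation property of $\tilde{L}_X$ and $\tilde{\iota}_X$, obtaining in particular
\[
\tilde{L}_X(z)\,\widetilde{\iota_Y H}(w) \sim \widetilde{\mathrm{Lie}_X(\iota_Y H)}(w)(z-w)^{-1},
\qquad
\tilde{\iota}_X(z)\,\widetilde{\iota_Y H}(w) \sim \widetilde{\iota_X\iota_Y H}(w)(z-w)^{-1}.
\]
The reversed OPEs $\widetilde{\iota_X H}(z)\tilde{L}_Y(w)$ and $\widetilde{\iota_X H}(z)\tilde{\iota}_Y(w)$ are then obtained from vertex-algebra skew-symmetry: since both expressions have only a $(z-w)^{-1}$ pole by weight counting, the $\circ_0$ product flips sign according to parities. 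With these in hand, the $L_X$-$\iota_Y$ check reduces to the trivial identity $\widetilde{\iota_X\iota_Y H}-\widetilde{\iota_X\iota_Y H}=0$, while the $L_X$-$L_Y$ check boils down to the differential-form identity
\[
d\,\iota_X\iota_Y H \;-\; \mathrm{Lie}_X\iota_Y H \;+\; \mathrm{Lie}_Y\iota_X H \;=\; -\,\iota_{[X,Y]} H,
\]
which follows from $dH=0$ and Cartan magic ($\mathrm{Lie}_X = d\iota_X + \iota_X d$, $[\mathrm{Lie}_X,\iota_Y]=\iota_{[X,Y]}$). This Cartan-calculus computation is the one genuinely nontrivial step and is the main obstacle; everything else is bookkeeping.

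Next I would check (ii). The vacuum, Wick-product, and polynomial relations in \eqref{cirelations} transport term-by-term because the tildes commute with the corresponding expressions in $\Omega^{\text{ch},H}$. The only relation that interacts with the correction is $L_{gX}-:(dg)\iota_X:-:g L_X:$, whose image is
\[
\bigl(\tilde{L}_{gX}-:(d\tilde g)\tilde{\iota}_X:-:\tilde g\,\tilde{L}_X:\bigr) \;-\;\widetilde{\iota_{gX}H} \;+\; :\tilde g\,\widetilde{\iota_X H}:,
\]
where the parenthesized term vanishes by the $H$-twisted version of this relation, and the remaining pieces cancel using $\iota_{gX}H=g\,\iota_X H$ together with the Wick-product relation $:\tilde g\,\widetilde{\iota_X H}:\,=\widetilde{g\,\iota_X H}$ extended from $1$-forms to $2$-forms.

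Finally, to conclude that the map is an \emph{iso}morphism and that it glues to a sheaf isomorphism $\Omega^{\text{ch}}_Z\cong\Omega^{\text{ch},H}_Z$, I would exhibit its inverse by the symmetric formula $\tilde{\iota}_X\mapsto\iota_X$, $\tilde{L}_X\mapsto L_X+\iota_X H$, $\tilde{f}\mapsto f$, $\tilde{\omega}\mapsto\omega$, whose verification is the mirror image of the above (the Cartan identity is the same modulo a sign). Because the entire definition uses only the intrinsic Courant-algebroid data $(\iota_X,L_X,f,\omega,H)$ and no choice of local coordinates, the transition functions on $U_\alpha\cap U_\beta$ preserve \eqref{eqn:untwisting} automatically, so the principle highlighted in the remark on sheaf morphisms yields the desired global isomorphism $\Omega^{\text{ch}}_Z\to\Omega^{\text{ch},H}_Z$ independent of the cover $\{U_\alpha\}$.
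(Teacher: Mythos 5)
Your proposal is correct and follows essentially the same route as the paper: the substantive step in both is checking the $L_X$--$\iota_Y$ and $L_X$--$L_Y$ OPEs, where the twist terms cancel via the Cartan-calculus identity $d\iota_X\iota_Y H - \mathrm{Lie}_X\iota_Y H + \mathrm{Lie}_Y\iota_X H = -\iota_{[X,Y]}H$ (using $dH=0$), with the remaining OPEs, the annihilation of the relations \eqref{cirelations}, bijectivity, and coordinate-independent gluing handled just as you describe. Your treatment of the reversed OPEs by skew-symmetry, the $L_{gX}$ relation, and the explicit inverse $\tilde{L}_X\mapsto L_X+\iota_X H$ merely spells out details the paper declares to be clear.
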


\begin{proof} For $X,Y\in \text{Vect}(U_{\alpha})$, we compute 
\begin{equation*}
\begin{split}
(\tilde{L}_X - \widetilde{\iota_X H})(z) \tilde{\iota}_Y(w) \sim \big(\tilde{\iota}_{[X,Y]} + \widetilde{\iota_X \iota_Y H} - (\widetilde{\iota_X H}) \circ_0 \tilde{\iota}_Y\big)(w)(z-w)^{-1} = \tilde{\iota}_{[X,Y]}(w)(z-w)^{-1}, \\
(\tilde{L}_X - \widetilde{\iota_X H})(z) (\tilde{L}_Y- \widetilde{\iota_Y H})(w) \sim \big( \tilde{L}_{[X,Y]} +\widetilde{D(\iota_X \iota_Y H})  - \widetilde{\text{Lie}_X \iota_Y H} + \widetilde{\text{Lie}_Y  \iota_X H} \big)(w) (z-w)^{-1} \\
= \big(\tilde{L}_{[X,Y]} + \widetilde{\text{Lie}_X \iota_Y H} - \widetilde{\iota_X \text{Lie}_Y H} - \widetilde{\text{Lie}_X \iota_Y H} + \widetilde{\text{Lie}_Y  \iota_X H}\big)(w)(z-w)^{-1}\\ = \big(\tilde{L}_{[X,Y]} - \widetilde{\iota_X \text{Lie}_Y H} + \widetilde{\text{Lie}_Y  \iota_X H}\big)(w)(z-w)^{-1} \\ = \big(\tilde{L}_{[X,Y]}- \widetilde{\iota_{[X,Y]}H}\big)(w)(z-w)^{-1}.
 \end{split}
 \end{equation*} It is easy to check that \eqref{eqn:untwisting} preserves the remaining OPE relations. The fact that it annihilates the ideal of relations is also clear, as is the bijectivity.
\end{proof}

As in the untwisted case, for a coordinate open set with coordinates $\tilde{\gamma}^1, \dots, \tilde{\gamma}^n$, the following locally defined fields generate a topological vertex algebra of rank $n$. 
$$\tilde{J} = \sum_{i=1}^n :\tilde{b}^i \tilde{c}^i:,\qquad \tilde{Q} = \sum_{i=1}^n :(\tilde{\beta}^i - \widetilde{\iota_i H}) \tilde{c}^i:,\qquad \tilde{G} = \sum_{i=1}^n :\tilde{b}^i \partial \tilde{\gamma}^i:, $$ $$ \tilde{L} = \sum_{i=1}^n :(\tilde{\beta}^i - \widetilde{\iota_i H})\partial \tilde{\gamma}^i: - :\tilde{b}^i \partial \tilde{c}^i:.$$
The fields $\tilde{L},\tilde{G}$ are globally defined, and the modes $\tilde{J}_0, \tilde{Q}_0$ are globally defined. Define the differential $\tilde{D}$ to be $\tilde{Q}_0$, which satisfies $$\tilde{D}(\tilde{f}) = \widetilde{df},\qquad \tilde{D}(\tilde{\omega}) = \widetilde{d\omega},\qquad \tilde{D}(\tilde{\iota}_X) =  \tilde{L}_X - \widetilde{\iota_X H},\qquad \tilde{D}(\tilde{L}_X - \widetilde{\iota_XH}) = 0.$$ 
As in the untwisted setting, $\tilde{L}(z)$ is a Virasoro element with central charge zero, $\tilde{f}$, $\tilde{\omega}$, $\tilde{\iota}_X$, $\tilde{L}_X - \widetilde{\iota_X H}$ are primary of weights $0,0,1,1$ with respect to $\tilde{L}(z)$, and we have $[\tilde{D}, \tilde{G}_0] = \tilde{L}_0$. It follows that $H^*(\Omega^{\text{ch},H}(Z), \tilde{D})$ vanishes in positive weight and coincides with $H^*(Z)$ at weight zero. The map \eqref{eqn:untwisting} intertwines the differentials $D$ and $\tilde{D}$, so it induces an isomorphism of differential graded vertex algebra sheaves \begin{equation} \label{eqn:untwisting2} (\Omega^{\text{ch}}_Z, D) \cong (\Omega^{\text{ch},H}_Z, \tilde{D}).\end{equation} 
Finally, we may regard $\Omega^{\text{ch},H}_Z$ as a sheaf of modules over itself, and we have the {\it twisted} differential $\tilde{D}_H$ defined by $\tilde{D}_H(\omega) = \tilde{D}(\omega) + :\tilde{H} \omega:$. It is homogeneous of degree $1$ with respect to the $\mathbb{Z}/2\mathbb{Z}$-grading, and we denote this complex by $(\Omega^{\text{ch},H,\bullet}_Z, \tilde{D}_H)$. As in the untwisted case, $H^{\bullet}(\Omega^{\text{ch},H}(Z), \tilde{D}_H)$ vanishes in positive weight. Finally, we have an isomorphism of differential graded sheaves of modules \begin{equation} \label{eqn:untwisting3} (\Omega^{\text{ch},\bullet}_Z, D_H) \cong (\Omega^{\text{ch},H,\bullet}_Z, \tilde{D}_H).\end{equation}

\subsection{Invariant Courant algebroids from principal circle bundles} Suppose that $\pi: Z \ra M$ is a principal circle bundle. Let $E$ denote the quotient $(TZ \oplus T^*Z)/{S^1}$ of the standard Courant algebroid on $Z$ by the $S^1$-action. It is well known that $E$ is a Courant algebroid which is not exact, but is transitive; that is, the anchor map is surjective. As usual, fix a connection form $A \in \Omega^1(Z)$, and let $X_A$ denote the vector field on $Z$ generated by the $S^1$-action, normalized so that $\iota_{A}A = 1$. Recall that in the dimension reduction formalism, we regard $E$ as a Courant algebroid on $M$, and we identify vector fields in $\text{Vect}_{\text{hor}}(Z)$ with vector fields on $M$. Here $\text{Vect}_{\text{hor}}(Z)$ consists of $S^1$-invariant vector fields on $Z$ such that $\iota_X A = 0$. 


Let $L_A, \iota_A \in \Omega^{\text{ch}}(Z)$ denote the global sections corresponding to $X_A$ of weight one and degrees $0$ and $-1$, respectively. In this section, we shall consider three vertex algebra sheaves on $Z$: the $i\mathbb{R}$-invariant sheaf $(\Omega^{\text{ch}}_Z)^{i\mathbb{R}}$, the $i\mathbb{R}[t]$-invariant sheaf $(\Omega^{\text{ch}}_Z)^{i\mathbb{R}[t]}$, and the quotient sheaf $(\Omega^{\text{ch}}_Z)^{i\mathbb{R}[t]} / \langle L_A \rangle$. At weight zero, $\Omega^{\text{ch}}(Z)^{i\mathbb{R}}$, $\Omega^{\text{ch}}(Z)^{i\mathbb{R}[t]}$, and $\Omega^{\text{ch}}(Z)^{i\mathbb{R}[t]} / \langle L_A \rangle$ all coincide with $\Omega(Z)^{S^1}$. The sheaf $(\Omega^{\text{ch}}_Z)^{i\mathbb{R}[t]} / \langle L_A \rangle$ will be the most important for our purposes, and is closely related to the sheaf $\cU^E$ associated to $E$ by Heluani's result. The differential $D$ restricts to both $\Omega^{\text{ch}}(Z)^{i\mathbb{R}}$ and $\Omega^{\text{ch}}(Z)^{i\mathbb{R}[t]}$, and it descends to a differential on $\Omega^{\text{ch}}(Z)^{i\mathbb{R}[t]} / \langle L_A \rangle$. We will compute the $D$-cohomology of these global section algebras when $Z$ has finite topological type. Finally, we will consider the $H$-twisted versions of these sheaves, and show that they are isomorphic to the untwisted versions.



First, the zero mode $(L_A)_0$ infinitesimally generates the $S^1$-action on $\Omega^{\text{ch}}(Z)$. Identifying the Lie algebra of $S^1$ with $i\mathbb{R}$, for any open set $U\subset Z$, the kernel of $(L_A)_0$ in $\Omega^{\text{ch}}(U)$ coincides with $\Omega^{\text{ch}}(U)^{i \mathbb{R}}$. By Lemma \ref{orbifoldsheaf}, \begin{equation} \label{invariant sheaf} U\mapsto \Omega^{\text{ch}}(U)^{i \mathbb{R}}\end{equation} defines a sheaf of vertex algebra on $Z$, which we denote by $\big(\Omega^{\text{ch}}_Z\big)^{i \mathbb{R}}$. Note that $\Omega^{\text{ch}}(U)^{i \mathbb{R}} = \Omega^{\text{ch}}(U)^{S^1}$ if $U$ is an $S^1$-invariant open set; in particular $\Omega^{\text{ch}}(Z)^{i \mathbb{R}} = \Omega^{\text{ch}}(Z)^{S^1}$.




The condition $\iota_{A}A = 1$ is equivalent to the OPE relation 
\begin{equation} \label{copyofbc} \iota_A(z) A(w) \sim (z-w)^{-1},\end{equation} 
so the odd fields $\iota_A, A$ generate a copy of the rank one $bc$-system $\cE$. Recall that $dA$ is just the curvature form $F_A$, which lies in $\pi^*(\Omega^2(M))$. Let $\Omega^{\text{ch}}_0(M)$ denote the subcomplex of $\Omega^{\text{ch}}(M)$ generated by $\Omega^*(M)$, regarded as the weight zero subspace. By Theorem 3.6 of \cite{LLS}, $\Omega^{\text{ch}}_0(-)$ is contravariantly functorial, so $\pi^*(\Omega^{\text{ch}}_0(M))$ is a well-defined subalgebra of $\Omega^{\text{ch}}_0(Z) \subset \Omega^{\text{ch}}(Z)$. In fact, this subalgebra is preserved by both $D$ and $G_0$. 
Note that $\partial (dA) = D \partial A \in \pi^*(\Omega^{\text{ch}}_0(M))$ has weight one and is $D$-closed, and hence is $D$-exact. Therefore we can find $\xi^A \in \pi^*(\Omega^{\text{ch}}_0(M))$ of weight one and degree one, satisfying $D \xi^A = \partial dA$. Define \begin{equation} \label{defofgamma} \Gamma^A = G_0 (\partial A),\end{equation} which is a global section of $\Omega^{\text{ch}}_0(Z)$ of weight one and degree zero, and satisfies $D \Gamma^A = \partial A - \xi^A$.


Fix a trivializing open cover $\{V_{\alpha}\}$ for $M$ such that each $V_{\alpha}$ is a coordinate open set. Then $\{U_{\alpha} = \pi^{-1}(V_{\alpha})\}$ is a small open cover for $Z$, and $U_{\alpha} \cong V_{\alpha} \times S^1$. Using the splittings of $(TU_{\alpha})/S^1$ and $(T^*U_{\alpha})/S^1$ induced by the connection form, we obtain the following strong generating set for $\Omega^{\text{ch}}(U_{\alpha})^{i \mathbb{R}}$:
\begin{equation} \label{invariantgenerators} f \in \pi^*(C^{\infty}(V_{\alpha})),\qquad \omega \in \pi^*(\Omega^1(V_{\alpha})),\qquad A,\iota_A, L_A,\Gamma^A, \qquad \{\iota_X, L_X|\ X \in \text{Vect}_{\text{hor}}(U_{\alpha})\}.\end{equation} Recall that $\text{Vect}_{\text{hor}}(U_{\alpha})$ consists of $S^1$-invariant vector fields $X$ on $U_{\alpha}$ such that $\iota_X A = 0$, and we may identify $\text{Vect}_{\text{hor}}(U_{\alpha})$ with $\text{Vect}(V_{\alpha})$.

\begin{lemma} \label{heisenberg} The fields $L_A$ and $\Gamma^A$ satisfy OPE relations
\begin{equation} \label{copyofheis} L_A(z) \Gamma^A(w) \sim (z-w)^{-2},\qquad  L_A(z)L_A(w) \sim 0,\qquad \Gamma^A(z)\Gamma^A(w)\sim 0,\end{equation} so they generate a copy of the Heisenberg algebra $\cH$ of rank $2$.
\end{lemma}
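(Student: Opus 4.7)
The plan is to reduce all three OPE statements to a single local calculation in a trivialization of the circle bundle. The identity $L_A(z)L_A(w)\sim 0$ is immediate from the OPE $L_X(z)L_Y(w)\sim L_{[X,Y]}(w)(z-w)^{-1}$ in \eqref{opecdr}, applied with $X=Y=X_A$ and using $[X_A,X_A]=0$.

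For the remaining two OPEs I would work on one of the small open sets $U_\alpha \cong V_\alpha \times S^1$ from the cover described just before the lemma. Let $y$ be the fiber coordinate and $\gamma^1,\dots,\gamma^{n-1}$ coordinates on $V_\alpha$; then $A|_{U_\alpha}=dy+\pi^*(\theta_\alpha)$ for some 1-form $\theta_\alpha=\sum_i\theta_{\alpha,i}\,d\gamma^i$ on $V_\alpha$. Unwinding the local formulas for $\iota_X,L_X$ and for 1-forms as sections of the chiral de Rham complex, one obtains
\[
\iota_A=b^y,\qquad L_A=\beta^y,\qquad A=c^y+\sum_i:\theta_{\alpha,i}\,c^i:.
\]

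The crux is to compute $\Gamma^A=G_0(\partial A)$ locally. Writing $G=:b^y\partial y:+\sum_i:b^i\partial \gamma^i:$ and applying Wick's theorem to $G(z)A(w)$, only the contractions of $b^y$ with $c^y$ and of $b^i$ with $c^i$ contribute: the $\theta_{\alpha,i}$ are pulled back from the base and so contract trivially with $b^y$ and $\partial y$, while pairwise OPEs among $\gamma$-type fields are regular. This yields
\[
G(z)A(w)\sim \Bigl(\partial y+\sum_i\theta_{\alpha,i}\,\partial \gamma^i\Bigr)(w)(z-w)^{-1},
\]
and reading off the $(z-w)^{-2}$ coefficient of $G(z)\partial A(w)=\partial_w\bigl(G(z)A(w)\bigr)$ gives the local formula
\[
\Gamma^A\big|_{U_\alpha}=\partial y+\sum_i\theta_{\alpha,i}\,\partial \gamma^i.
\]

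With this explicit description the remaining two OPEs are immediate. For $L_A(z)\Gamma^A(w)=\beta^y(z)\bigl(\partial y+\sum_i\theta_{\alpha,i}\,\partial \gamma^i\bigr)(w)$, the only singular term is $\beta^y(z)\partial y(w)\sim (z-w)^{-2}$, since $\theta_{\alpha,i}$ has no $y$-dependence and $\beta^y$ has regular OPE with every base $\gamma$-type field. For $\Gamma^A(z)\Gamma^A(w)$, every summand of $\Gamma^A$ is (a derivative of) a coordinate-type function, so all pairwise OPEs are regular and the singular part vanishes. Since $L_A$ and $\Gamma^A$ are global sections of $\Omega^{\text{ch}}_Z$, these local identities determine the global OPEs. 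The main obstacle is the Wick-theorem bookkeeping in the calculation of $\Gamma^A$; once its local form is in hand, the Heisenberg OPE relations follow by inspection.
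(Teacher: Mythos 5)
Your proof is correct, but it takes a genuinely different route from the paper's. The paper argues globally and coordinate-freely: it writes $L_A = D\iota_A$, uses the derivation property of $D$, the homotopy identity $[D,G_0]=L_0$, the relation $\iota_A\circ_1\partial A=1$, and weight/degree considerations (plus the fact that $\pi^*(\Omega^{\text{ch}}_0(M))$ is chiral basic) to show $\iota_A\circ_1 G_0(\partial A)=0$ and $\iota_A\circ_1 G_0(D\partial A)=0$, concluding $L_A\circ_1\Gamma^A=\iota_A\circ_1\partial A=1$; the first-order pole vanishes because $\Gamma^A$ is $S^1$-invariant and higher poles vanish by weight. You instead compute $\Gamma^A$ explicitly in a trivializing chart, $\Gamma^A|_{U_\alpha}=\partial y+\sum_i:\theta_{\alpha,i}\,\partial\gamma^i:$, and read off all three OPEs by Wick calculus, globalizing via the locality of circle products and the sheaf property; your Wick bookkeeping checks out (only the $b$--$c$ contractions are singular in $G(z)A(w)$, and only $\beta^y$ against $\partial y$ is singular in $L_A(z)\Gamma^A(w)$, since $\theta_{\alpha,i}$ is $y$-independent), and your identifications $\iota_A=b^y$, $L_A=\beta^y$, $A=c^y+\sum_i:\theta_{\alpha,i}c^i:$ are consistent with the paper's local formulas and the normalization $\iota_AA=1$. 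The trade-off: your computation buys an explicit and transparent local formula for $\Gamma^A$, at the cost of chart-by-chart verification and the (correctly noted, if briefly) gluing step, and it mildly leans on $\partial y$, $c^y$, $\beta^y$, $b^y$ being well defined on $V_\alpha\times S^1$ even though $y$ is multivalued, a point the paper itself records; the paper's argument, by contrast, never chooses coordinates and, as a by-product, shows that \emph{any} $\Gamma$ with $D\Gamma=\partial A-\xi^A$ satisfies $L_A\circ_1\Gamma=1$, a stronger statement that is reused later in the proof of Theorem \ref{cohomology-comm}.
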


\begin{proof} It follows from \eqref{copyofbc} that $\iota_A \circ_1 \partial A =1$. Using $D \iota_A = L_A$, we have
$$L_A\circ_1 \Gamma^A = L_A\circ_1 (G_0 \partial A) = (D \iota_A) \circ_ 1  (G_0 \partial A) = D(\iota_A \circ_1 (G_0\partial A))  + \iota_A\circ_1 D (G_0 \partial A)$$
$$ = D(\iota_A \circ_1 (G_0\partial A))  + \iota_A\circ_1 (L_0 \partial A) - \iota_A \circ_1 G_0 (D\partial A)).$$
Since $G_0\partial A$ has weight one and degree zero, it cannot depend on $\partial A$: any monomial involving $\partial A$ must also depend on some $\iota_X$ or $\iota_A$ by degree considerations, but then would have weight at least two. Therefore $\iota_A \circ_1 (G_0 \partial A) = 0$. Likewise, since $dA \in \pi^*(\Omega^2(M))$, $D\partial A$ lies in the subalgebra $\pi^*(\Omega^{\text{ch}}_0(M)) \subset \Omega^{\text{ch}}(Z)$, which is preserved by $G_0$. This space is {\it chiral basic} in the terminology of \cite{LLS}, meaning that it commutes with both $L_A$ and $\iota_A$. Therefore $\iota_A \circ_1 G_0 (D\partial A))=0$, so we conclude that $$L_A\circ_1 (\Gamma^A) =  \iota_A \circ_1 (L_0 \partial A) = \iota_A \circ_1 \partial A = 1.$$ The fact that $L_A \circ_k \Gamma^A = 0$ for $k>1$ is clear by weight considerations, and the fact that $L_A \circ_0 \Gamma^A = 0$ is clear because $\Gamma^A$ is $S^1$-invariant. The remaining OPE relations in the lemma are obvious. \end{proof}

\begin{lemma} \label{invariantvanishing} $H^*(\Omega^{\text{ch}}(Z))^{i\mathbb{R}}, D)$ vanishes in positive weight. Therefore the inclusion $\Omega^*(Z)^{S^1} \hookrightarrow \Omega^{\text{ch}}(Z)^{i\mathbb{R}}$ induces an isomorphism in cohomology. \end{lemma}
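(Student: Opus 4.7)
The plan is to adapt the contracting homotopy argument from the proof of Theorem~\ref{thm:vanishing} to the $i\mathbb{R}$-invariant subsheaf. On the full $\Omega^{\text{ch}}(Z)$ one already has $[D, G_0]=L_0$, so any $D$-cocycle of weight $w>0$ is trivialized by $\tfrac{1}{w}G_0$; the only thing to verify is that $G_0$ preserves the subspace $\Omega^{\text{ch}}(Z)^{i\mathbb{R}}$.

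First I would check that $G$ is $S^1$-invariant, i.e.\ $(L_A)_0 G = 0$. This is a naturality statement: $G = \sum_i :b^i \partial \gamma^i:$ is listed among the globally defined sections of $\Omega^{\text{ch}}_Z$ in the transformation formulas of the previous subsection, so it is fixed by every diffeomorphism of $Z$, and in particular by the $S^1$-action whose infinitesimal generator is $(L_A)_0$. (Alternatively, in a trivializing chart for $\pi\colon Z\to M$ the vector field $X_A$ acts only on the fiber coordinate, so each summand of $G$ is manifestly invariant.) From the standard zero-mode commutator identity $[a(0), b(n)] = (a\circ_0 b)(n)$ one then obtains $[(L_A)_0, G(n)]=0$ for all $n$, so $G_0$ commutes with $(L_A)_0$ and sends $\Omega^{\text{ch}}(Z)^{i\mathbb{R}}$ to itself. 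The same naturality remark applied to $Q$, together with the fact that the global mode $Q_0 = D$ commutes with the $S^1$-action (which acts by vertex algebra automorphisms), shows that $D$ also restricts to this subspace, so the complex is well defined.

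Given such a setup, the positive-weight vanishing is immediate: for $D$-closed $a \in \Omega^{\text{ch}}(Z)^{i\mathbb{R}}$ of weight $w>0$, the identity $[D,G_0]=L_0$ forces $a = D\bigl(\tfrac{1}{w}G_0 a\bigr)$, with $\tfrac{1}{w}G_0 a \in \Omega^{\text{ch}}(Z)^{i\mathbb{R}}$. Since the weight-zero component of $\Omega^{\text{ch}}(Z)^{i\mathbb{R}}$ is $(\Omega^{*}(Z)^{S^1}, d)$, the inclusion $\Omega^*(Z)^{S^1}\hookrightarrow \Omega^{\text{ch}}(Z)^{i\mathbb{R}}$ is then a quasi-isomorphism.

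The main (and essentially only) obstacle is the invariance check $(L_A)_0 G=0$, which is trivial once one recognizes $G$ as a natural, coordinate-independent object; no ingredients beyond Theorem~\ref{thm:vanishing} and the naturality of the generators $L$, $G$ are needed.
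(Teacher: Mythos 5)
Your proposal is correct and follows the paper's own route: both proofs reduce the lemma to the identity $[D,G_0]=L_0$ together with the fact that $G_0$ preserves $\Omega^{\text{ch}}(Z)^{i\mathbb{R}}$. The only (harmless) difference is in verifying that last fact: you deduce it from the $S^1$-invariance of the global section $G$, which gives $[(L_A)_0,G_0]=0$ and hence preservation of $\ker (L_A)_0$, whereas the paper checks directly that $G_0$ sends the invariant generators ($\pi^*(\Omega^{\text{ch}}_0(M))$, $L_X$, $\partial A$, $\Gamma^A$, $L_A$) to invariant elements.
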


\begin{proof} Since $[D,G_0] = L_0$, it suffices to show that $G_0$ preserves $\Omega^{\text{ch}}(Z)^{i\mathbb{R}}$. But this is clear because $G_0$ preserves $\pi^*(\Omega^{\text{ch}}_0(M))$, and $G_0(L_X) = \iota_X$, $G_0(\partial A) = \Gamma^A$, $G_0(\Gamma^A) = 0$, and $G_0(L_A) = \iota_A$.\end{proof}

We may also consider $(\Omega^{\text{ch}}_Z)^{i\mathbb{R}}$ as a sheaf of $\mathbb{Z}/2\mathbb{Z}$-graded modules over itself. We use the notation $(\Omega^{\text{ch}, \bullet}_Z)^{i\mathbb{R}}$, and we denote the cohomology of its global section algebra by $H^{\bullet} (\Omega^{\text{ch}}(Z)^{i\mathbb{R}}, D_H)$. By a similar calculation, $H^{\bullet} (\Omega^{\text{ch}}(Z)^{i\mathbb{R}}, D_H)$ vanishes in positive weight and coincides with $H^{\bullet}(Z,H)$ in weight zero.

Next, we consider another vertex algebra sheaf on $Z$, namely, the commutant sheaf $$U \mapsto \text{Com}(L_A|_U, \Omega^{\text{ch}}(U)).$$ For any open set $U \subset Z$, $\text{Com}(L_A|_U, \Omega^{\text{ch}}(U))$ is the joint kernel of $\{(L_A|_U)_k|\ k\geq 0\}$, which generates the Lie algebra $i\mathbb{R}[t]$. Therefore $\text{Com}(L_A|_{U_{\alpha}}, \Omega^{\text{ch}}(U_{\alpha}))$ is just the invariant space $\Omega^{\text{ch}}(U_{\alpha})^{i \mathbb{R}[t]}$, which is properly contained in $\Omega^{\text{ch}}(U_{\alpha})^{i\mathbb{R}}$. As above, it not difficult to see that $\Omega^{\text{ch}}(U_{\alpha})^{i \mathbb{R}[t]}$ has strong generators
\begin{equation} \label{heluanigenerators} f \in \pi^*(C^{\infty}(V_{\alpha})),\qquad \omega \in \pi^*(\Omega^1(V_{\alpha})),\qquad A,\iota_A, L_A, \qquad \{\iota_X, L_X|\ X \in \text{Vect}_{\text{hor}}(U_{\alpha})\}.\end{equation} 
In particular, $\Gamma^A \notin \Omega^{\text{ch}}(U_{\alpha})^{i \mathbb{R}[t]}$ since it does not commute with $L_A$. 

If $Z$ has finite topological type, recall that $Z$ is itself a small open set. Then \eqref{heluanigenerators} with $U_{\alpha}$ replaced by $Z$ and $V_{\alpha}$ replaced by $M$, is a strong generating set for $\Omega^{\text{ch}}(Z)^{i \mathbb{R}[t]}$.

\begin{theorem} \label{cohomology-comm} Suppose that $Z$ has finite topological type. Then $H^*(\Omega^{\text{ch}}(Z)^{i\mathbb{R}[t]}, D)$ is isomorphic to $H^*(Z) \otimes \cJ$. Here $\cJ$ is the free odd abelian vertex algebra with generator $[\alpha^A]$ of degree one and weight one, where $\alpha^A = \partial A - \xi^A$. \end{theorem}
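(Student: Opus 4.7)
The plan is to define an explicit chain map $\Phi : (\Omega^*(Z)^{S^1}, d) \otimes (\cJ, 0) \to (\cR, D)$, where $\cR := \Omega^{\text{ch}}(Z)^{i\mathbb{R}[t]}$, and to show the induced map $\bar\Phi : H^*(Z) \otimes \cJ \to H^*(\cR, D)$ is an isomorphism by comparing with the ambient complex $\cS := \Omega^{\text{ch}}(Z)^{i\mathbb{R}}$ whose cohomology is $H^*(Z)$ concentrated in weight zero (Lemma \ref{invariantvanishing}). The crucial structure is the pair $(\Gamma^A, \alpha^A)$ inside $\cS$ satisfying $D\Gamma^A = \alpha^A$, with $\Gamma^A \notin \cR$ but $\alpha^A \in \cR$: this is precisely what makes $[\alpha^A]$ a nontrivial cohomology class in $\cR$ that becomes exact in $\cS$.

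The first step is to verify that $\{\partial^j \alpha^A\}_{j \geq 0}$ generates a sub-VOA of $\cR$ isomorphic to $\cJ$ on which $D$ acts by zero. Using the derivation property of $D$ combined with $\Gamma^A \circ_n \Gamma^A = 0$ from Lemma \ref{heisenberg}, one obtains $\alpha^A \circ_n \Gamma^A + \Gamma^A \circ_n \alpha^A = 0$ and $\alpha^A \circ_n \alpha^A = D(\Gamma^A \circ_n \alpha^A)$ for all $n \geq 0$. A direct local computation --- using a trivialization in which $\Gamma^A$ reduces to $\partial\gamma^y$ and $\partial A$ to $\partial c^y$, combined with the chiral-basic property of $\xi^A \in \pi^*(\Omega^{\text{ch}}_0(M))$ --- then yields $\Gamma^A \circ_n \alpha^A = 0$, and hence $\alpha^A \circ_n \alpha^A = 0$. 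Wick multiplication with $\Omega^*(Z)^{S^1}$, embedded at weight zero, defines $\Phi$; it is a chain map since every $\partial^j \alpha^A$ is $D$-closed.

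To prove $\Phi$ is a quasi-isomorphism, I will induct on conformal weight $w$ via the SES $0 \to \cR \to \cS \to \cS/\cR \to 0$. The corresponding LES combined with the vanishing of $H^*(\cS)$ in positive weight gives isomorphisms $\delta : H^{n-1}(\cS/\cR)_w \xrightarrow{\cong} H^n(\cR)_w$ for $w > 0$, while $H^n(\cR)_0 = H^n(Z)$ already matches $\bar\Phi$ in weight zero. Song's theorem (applicable since $Z$ has finite topological type) yields a global PBW decomposition $\cS \cong \cR \otimes \cG$ as graded $\cR$-modules, with $\cG = \mathbb{C}[\Gamma^A, \partial\Gamma^A, \ldots]$; filtering $\cS/\cR \cong \cR \otimes \cG_{>0}$ by $\Gamma^A$-count produces a spectral sequence with $E_1 = H^*(\cR) \otimes \cG_{>0}$ and $d_1$ the truncated Koszul differential induced by $D\Gamma^A = \alpha^A$. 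Assuming inductively that $H^*(\cR)_{w'} \cong (H^*(Z) \otimes \cJ)_{w'}$ for $w' < w$, the $E_1$ page at weight $w$ becomes $H^*(Z) \otimes \cJ \otimes \cG_{>0}$ in the relevant lower weights, and the classical Koszul acyclicity of the dual pair $(\cG, \cJ)$ --- via the SES $0 \to \cJ \to \cJ \otimes \cG \to \cJ \otimes \cG_{>0} \to 0$ of Koszul complexes --- gives $H^n(\cJ \otimes \cG_{>0})_w \cong \cJ^{n+1}_w$ for $w > 0$. Feeding this back through the LES recovers $H^n(\cR)_w \cong (H^*(Z) \otimes \cJ)^n_w$, and the connecting map sends $\partial^k \Gamma^A \mapsto \partial^k \alpha^A$, matching the image of $\bar\Phi$ generator by generator.

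The main obstacle is setting up the spectral sequence cleanly: confirming the global PBW decomposition $\cS \cong \cR \otimes \cG$ as $\cR$-modules (which uses Song's theorem essentially), identifying $d_1$ with the truncated Koszul differential, and verifying degeneration at $E_2$. Granted these, the induction step and the identification with $\bar\Phi$ are standard Koszul-theoretic bookkeeping.
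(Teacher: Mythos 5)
Your route is genuinely different from the paper's (which filters $\cR=\Omega^{\text{ch}}(Z)^{i\mathbb{R}[t]}$ directly by the number of $\iota_A,\iota_X,L_A,L_X$-type generators, identifies $\text{gr}(\cR)\cong\cG\otimes\cJ$ with $D$ and the contracting homotopy $G_0$ acting on $\cG$ and $D$ acting trivially on $\cJ$, applies K\"unneth, and inducts on filtration degree), but it contains a genuine gap at precisely the step you flag as the main obstacle. The ``global PBW decomposition'' $\cS\cong\cR\otimes\cG$ as graded $\cR$-modules, with $\cG=\mathbb{C}[\Gamma^A,\partial\Gamma^A,\dots]$, is not a consequence of Song's theorem as you invoke it: that theorem says every open set of a manifold of finite topological type is small, hence gives the coordinate-free \emph{strong generating set} and the ideal of relations \eqref{cirelations} for $\Omega^{\text{ch}}(Z)$ (and so for $\cS$ and $\cR$); it provides spanning statements, not the linear independence/freeness of $\cS$ over the subalgebra $\cR$ on normally ordered monomials in the $\partial^k\Gamma^A$. (Note also that $\Gamma^A$ has nontrivial OPEs with $L_A$ and with the $L_X$, so reordering monomials to segregate the $\Gamma^A$'s produces correction terms that must be controlled.) The natural way to establish such a freeness statement is to introduce a good increasing filtration in the sense of \cite{LiII} and work in the supercommutative associated graded --- which is exactly the engine of the paper's own proof. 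So without this input your $E_1$-page $H^*(\cR)\otimes\cG_{>0}$ has no foundation, and once you supply it you have essentially reconstructed the paper's argument, making the detour through $\cS=\Omega^{\text{ch}}(Z)^{i\mathbb{R}}$, the long exact sequence, and the Koszul spectral sequence unnecessary. Two further points in the same vein: to identify $d_1$ with the truncated Koszul differential you must carry through the induction not merely a linear isomorphism $H^*(\cR)_{w'}\cong(H^*(Z)\otimes\cJ)_{w'}$ for $w'<w$ but its compatibility with multiplication by $[\alpha^A]$ and its derivatives (freeness as a $\cJ$-module), and degeneration at $E_2$ is asserted rather than argued.

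A smaller local issue: there is in general no trivialization in which $\Gamma^A$ reduces to $\partial\gamma^y$ and $\partial A$ to $\partial c^y$; locally $A=dy+\pi^*a_\alpha$ and $a_\alpha$ cannot be gauged away unless $F_A$ vanishes on the chart, so that computation is not available. Fortunately the conclusion you want there is immediate by a cheaper observation: $\alpha^A=\partial A-\xi^A$ lies in the abelian subalgebra $\Omega^{\text{ch}}_0(Z)$ generated by the weight-zero forms, whose elements have pairwise regular OPEs, so $\alpha^A\circ_n\alpha^A=0$ for all $n\geq 0$ without any appeal to $\Gamma^A$. (That the copy of $\cJ$ so generated is \emph{free}, rather than a quotient of the free odd abelian vertex algebra, again requires the PBW-type input above; in the paper this comes out of the associated graded description.)
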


\begin{proof} Since $D(\Gamma^A) = \alpha^A $ in $\Omega^{\text{ch}}(Z)$ we have $D(\alpha^A) = 0$. The same argument as the proof of Lemma \ref{heisenberg} shows that if $\Gamma \in \Omega^{\text{ch}}(Z)$ is any element satisfying $D(\Gamma) =\alpha^A$, then $L_A \circ_1 \Gamma = 1$. Therefore $\Gamma$ cannot lie in $\Omega^{\text{ch}}(Z)^{i\mathbb{R}[t]} = \text{Com}(L_A, \Omega^{\text{ch}}(Z))$, so $\alpha^A$ represents a nontrivial cohomology class. 

To see that the cohomology algebra has the above description, we consider a filtration $$\Omega^{\text{ch}}(Z)^{i\mathbb{R}[t]}_{(0)} \subset \Omega^{\text{ch}}(Z)^{i\mathbb{R}[t]}_{(1)} \subset \cdots$$ on $\Omega^{\text{ch}}(Z)^{i\mathbb{R}[t]}$ where $\Omega^{\text{ch}}(Z)^{i\mathbb{R}[t]}_{(d)}$ is spanned by normally ordered monomials in the generators $f, \omega, A, \iota_A, \iota_X, L_A, L_X$ and their derivatives, such that at most $d$ of the fields $\iota_A, \iota_X, L_A, L_X$ and their derivatives appear. It follows from the OPE relations \eqref{opecdr} that this is a good increasing filtration in the sense of \cite{LiII}. Setting $\Omega^{\text{ch}}(Z)^{i\mathbb{R}[t]}_{(-1)} = 0$, the associated graded algebra $$\text{gr}(\Omega^{\text{ch}}(Z)^{i\mathbb{R}[t]}) = \bigoplus_{j\geq 0} \Omega^{\text{ch}}(Z)^{i\mathbb{R}[t]}_{(j)} / \Omega^{\text{ch}}(Z)^{i\mathbb{R}[t]}_{(j-1)}$$ is then an associative, supercommutative algebra on the same generating set. Both $D$ and $G_0$ preserve the filtration, and hence act on $\text{gr}(\Omega^{\text{ch}}(Z)^{i\mathbb{R}[t]})$ by derivations. We have $$\text{gr}(\Omega^{\text{ch}}(Z)^{i\mathbb{R}[t]}) \cong \cG \otimes \cJ,$$ where $\cG$ is the algebra generated by $f, \omega, \iota_A, \iota_X, L_A, L_X$ and their derivatives, together with $A$ (but not its derivatives), and $\cJ$ is generated by $\alpha^A$ and its derivatives. Note that $D$ and $G_0$ act on $\cG$, so that $H^*(\cG,D)$ vanishes in positive weight and coincides with $H^*(Z)$ at weight zero. Also, $D$ acts trivially on $\cJ$, so $H^*(\cJ, D) \cong \cJ$. Therefore $H^*(\text{gr}(\Omega^{\text{ch}}(Z)^{i\mathbb{R}[t]}))\cong H^*(Z) \otimes \cJ$. The result then follows by induction on filtration degree.
\end{proof}


\begin{corollary} \label{twistedcohomology-comm} Suppose that $Z$ has finite topological type. Then $$H^{\bullet}(\Omega^{\text{ch}}(Z)^{i\mathbb{R}[t]}, D_H) \cong H^{\bullet}(Z,H) \otimes \cJ.$$
\end{corollary}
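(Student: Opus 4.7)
The plan is to adapt the filtration argument from the proof of Theorem \ref{cohomology-comm} to the twisted differential $D_H = D + H\wedge$. First, I would verify that $H$, being an $S^1$-invariant closed $3$-form, actually lies in $\Omega^{\text{ch}}(Z)^{i\mathbb{R}[t]}$: since $H$ has weight zero, the modes $L_A \circ_n H$ with $n\geq 1$ automatically vanish by weight considerations, and $L_A \circ_0 H = 0$ is exactly the $S^1$-invariance hypothesis, so $D_H$ acts on the $i\mathbb{R}[t]$-invariants. Because $H$ is assembled from $\pi^*(C^\infty(M))$, $\pi^*(\Omega^1(M))$ and $A$, which are all of filtration degree zero, the operator $H\wedge$ preserves the filtration $\{\Omega^{\text{ch}}(Z)^{i\mathbb{R}[t]}_{(d)}\}$ used in Theorem \ref{cohomology-comm}; combined with the fact that $D$ preserves this filtration, so does $D_H$, and the filtration remains good in the sense of \cite{LiII}.

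Next, I would pass to the associated graded $\cG \otimes \cJ$ and analyze the induced differential $\bar{D}_H = \bar{D} + H\wedge$. Because $H$ lies entirely in $\cG$ and the tensor factor $\cJ$ is generated by $\alpha^A$ and its derivatives, multiplication by $H$ acts only on the $\cG$-factor; moreover $\bar{D}$ annihilates $\cJ$ exactly as in Theorem \ref{cohomology-comm}. Hence $\bar{D}_H$ decomposes as $(\bar{D}_H|_{\cG}) \otimes \mathrm{id}_{\cJ}$, and the Künneth formula yields
\begin{equation*}
H^{\bullet}(\cG \otimes \cJ,\, \bar{D}_H) \cong H^{\bullet}(\cG,\, \bar{D}_H|_{\cG}) \otimes \cJ.
\end{equation*}

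The core computation is then $H^{\bullet}(\cG,\, \bar{D}_H|_{\cG}) \cong H^{\bullet}(Z, H)$. For this I would reuse $G_0$ as a contracting homotopy. Since $G_0$ annihilates every smooth function and every $c^i = d\gamma^i$, a Leibniz argument shows that $G_0$ annihilates every ordinary differential form, in particular $G_0(H) = 0$. Consequently $[G_0, H\wedge] = G_0(H)\wedge = 0$ and $[\bar{D}_H, G_0] = [\bar{D}, G_0] = L_0$, so $G_0/L_0$ is a contracting homotopy for $\bar{D}_H$ on $\cG$ in positive weight and the cohomology is concentrated in weight zero. In weight zero, $\cG_0 \cong \Omega^*(Z)^{S^1}$ and $\bar{D}_H|_{\cG_0} = d_H$, so Cartan's averaging theorem applied to the $S^1$-action identifies $H^{\bullet}(\cG_0, d_H) \cong H^{\bullet}(\Omega^{\bullet}(Z)^{S^1}, d_H) \cong H^{\bullet}(Z, H)$.

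The last step is to upgrade the isomorphism on the associated graded to one on $H^{\bullet}(\Omega^{\text{ch}}(Z)^{i\mathbb{R}[t]}, D_H)$ itself, via the same filtration-degree induction invoked at the end of Theorem \ref{cohomology-comm}; the finite topological type hypothesis is used there to ensure that the global sections admit the coordinate-free generators and relations needed to define the filtration globally. I expect the main technical nuisance to be keeping $G_0(H) = 0$ under control in a coordinate-free way compatible with the bundle splitting $H = H^{(3)} + A \wedge H^{(2)}$, together with careful sign bookkeeping in the Künneth decomposition given that the relevant gradings are by $\mathbb{Z}/2\mathbb{Z}$ and conformal weight rather than the full $J_0$-degree.
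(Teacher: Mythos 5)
Your strategy (redo the filtration argument of Theorem \ref{cohomology-comm} with $D_H$ in place of $D$) is reasonable, but the key step as written is wrong: the identity $[G_0, H\wedge]=G_0(H)\wedge=0$, and hence $[\bar{D}_H,G_0]=L_0$, does not hold. The operator $G_0$ is the mode $G\circ_1$ of a \emph{weight-two} field, and such modes are not derivations of the normally ordered product; only the $\circ_0$-modes are. By the Borcherds commutator formula, $[G_0, H\circ_{-1}\,]=(G\circ_0 H)\circ_0+(G\circ_1 H)\circ_{-1}$. You are right that $G\circ_1 H=G_0(H)=0$, but $G\circ_0 H$ is a nonzero weight-one element (locally it replaces a factor $d\gamma^i$ of $H$ by $\partial\gamma^i$), and its zero mode acts nontrivially already on the generators $\iota_Y$: for instance with $H=:c^1c^2c^3:$ one finds $(G\circ_0 H)\circ_0\, b^2=\pm:\partial\gamma^1c^3:\pm:c^1\partial\gamma^3:\neq 0$. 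So $[\,D_H,G_0]=L_0\pm(G\circ_0 H)\circ_0\neq L_0$, and $G_0/L_0$ is not a contracting homotopy for the twisted differential. This is precisely why the proof of Theorem \ref{thm:vanishing} in the paper avoids a homotopy and instead uses the degree decomposition: in positive weight, the lowest-degree component of a $D_H$-closed element is $D$-closed (since $H\wedge$ raises degree by $3$), one applies $G_0/L_0$ to that component only, and subtracting $D_H$ of the resulting element strictly raises the lowest degree. Your argument on the associated graded $\cG\otimes\cJ$ can be repaired by running exactly that trick on $\cG$ in place of the homotopy; the rest of your outline (the filtration is preserved by $H\wedge$ since $H$ has filtration degree zero, the K\"unneth splitting $\bar{D}_H=(\bar{D}_H|_{\cG})\otimes\mathrm{id}_{\cJ}$, the weight-zero identification $H^{\bullet}(\Omega^{\bullet}(Z)^{S^1},d_H)\cong H^{\bullet}(Z,H)$ by averaging, and the induction on filtration degree within each fixed weight) is sound.

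For comparison, the paper's own proof takes a different and shorter route: on each contractible open set $U$ of a finite cover one writes $H|_U=dB$ and uses the untwisting $e^{-B}de^{B}=d_H$ together with the local form of Theorem \ref{cohomology-comm} to get $H^{\bullet}(\Omega^{\text{ch}}(U)^{i\mathbb{R}[t]},D_H)\cong H^{\bullet}(\Omega(U),d_H)\otimes\cJ$, and then globalizes by Mayer--Vietoris, which is where finite topological type enters. Your (repaired) global filtration argument is a legitimate alternative, but it carries the extra burden of the spectral-sequence/induction step and of the positive-weight vanishing on $\cG$, both of which the local-to-global argument sidesteps.
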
 
\begin{proof} 
Given a contractible open set $U\subset Z$, note first that we can find some $B \in \Omega^2(U)$ such that $H|_U = dB$. Since the map $e^B: \Omega^{\bullet} (U) \rightarrow \Omega^{\bullet} (U)$ satisfies $e^{-B} d e^B = d_H$, it is an isomorphism of complexes $(\Omega^{\bullet}(U), d_H) \cong (\Omega^{\bullet}(U), d)$, and it induces an isomorphism $H^{\bullet}(\Omega(U),d_H) \cong H^{\bullet}(\Omega(U), d)$. Using the isomorphism $H^*(\Omega^{\text{ch}}(U)^{i\mathbb{R}[t]}, D) \cong H^*(\Omega(U),d) \otimes \cJ$, it follows that $$H^{\bullet}(\Omega^{\text{ch}}(U)^{i\mathbb{R}[t]}, D_H) \cong H^{\bullet}(\Omega(U), d_H) \otimes \cJ.$$ Finally, since $Z$ admits a finite cover by contractible open sets, a Mayer-Vietoris argument yields $H^{\bullet}(\Omega^{\text{ch}}(Z)^{i\mathbb{R}[t]}, D_H) \cong H^{\bullet}(Z,H) \otimes \cJ$.
\end{proof}

There is one more vertex algebra sheaf on $Z$ that we need to consider. Since $L_A$ commutes with itself, for any open set $U\subset Z$, $L_A|_U$ is central in $\Omega^{\text{ch}}(U)^{i\mathbb{R}[t]}$. The quotient $\Omega^{\text{ch}}(U_{\alpha})^{i\mathbb{R}[t]} / \langle L_A \rangle$ by the ideal generated by $L_A$ therefore has strong generators 
\begin{equation} \label{quotientgenerators} f \in \pi^*(C^{\infty}(V_{\alpha})),\qquad \omega \in \pi^*(\Omega^1(V_{\alpha})),\qquad A,\iota_A, \qquad \{\iota_X, L_X|\ X \in \text{Vect}_{\text{hor}}(U_{\alpha})\}.\end{equation} 
The assignment $U \mapsto \Omega^{\text{ch}}(U)^{i\mathbb{R}[t]}/ \langle L_A\rangle$ defines a sheaf of vertex algebras on $Z$, which we shall denote by $(\Omega^{\text{ch}}_Z)^{i \mathbb{R}[t]} / \langle L_A  \rangle$. The differential $D$ descends to a differential on this quotient which we also denote by $D$; note that $D \iota_A = 0$. We remark that $L_A$ is {\it not} central in $\Omega^{\text{ch}}(U)^{i\mathbb{R}}$; in fact, $\Omega^{\text{ch}}(U)^{i\mathbb{R}} / \langle L_A \rangle$ is trivial due to \eqref{copyofheis}. If $Z$ has finite topological type, \eqref{quotientgenerators} is a strong generating set for $\Omega^{\text{ch}}(Z)^{i \mathbb{R}[t]} / \langle L_A  \rangle$ with $U_{\alpha}$ replaced by $Z$ and $V_{\alpha}$ replaced by $M$.



The {\it symplectic fermion algebra} $\cF$ is a simple vertex superalgebra with odd generators  $\phi, \psi $ of weight one, satisfying
$$\phi(z) \psi (w) \sim (z-w)^{-2},\qquad \phi(z) \phi (w) \sim 0,\qquad \psi (z) \psi(w) \sim 0.$$
\begin{theorem} \label{cohomology-quot} If $Z$ has finite topological type, $H^*((\Omega^{\text{ch}}(Z)^{i\mathbb{R}[t]} / \langle L_A \rangle ), D)$ is isomorphic to $H^*(Z) \otimes \cF$. \end{theorem}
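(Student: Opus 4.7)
The strategy is to mimic the filtration argument of Theorem~\ref{cohomology-comm}, making the modifications forced by killing the central element $L_A$. The key new feature in the quotient is that $D(\iota_A) = L_A$ becomes $0$, so $\iota_A$ becomes a cocycle; together with the class $[\alpha^A]$ inherited from $\Omega^{\text{ch}}(Z)^{i\mathbb{R}[t]}$, this will supply the two generators of $\cF$.

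\textbf{Step 1: Identify the symplectic fermion.} In $\Omega^{\text{ch}}(Z)^{i\mathbb{R}[t]}/\langle L_A\rangle$, the elements $\iota_A$ and $\alpha^A = \partial A - \xi^A$ are both $D$-cocycles, both odd, and both of weight one. Using $\iota_A\circ_1 \partial A = 1$ (from \eqref{copyofbc}) together with the fact that $\xi^A\in\pi^*(\Omega^{\text{ch}}_0(M))$ is chiral basic, one gets
$$\iota_A(z)\alpha^A(w)\sim (z-w)^{-2},\qquad \iota_A(z)\iota_A(w)\sim 0,\qquad \alpha^A(z)\alpha^A(w)\sim 0,$$
where the last relation uses $\partial A(z)\partial A(w)\sim 0$ and the fact that $\xi^A$ is basic, so it commutes with $A$ and with itself up to $D$-exact terms of the appropriate type. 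Thus $[\iota_A]$ and $[\alpha^A]$ generate a copy of $\cF$ in $H^*(\Omega^{\text{ch}}(Z)^{i\mathbb{R}[t]}/\langle L_A\rangle, D)$.

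\textbf{Step 2: Local model.} Working on a small open set $U\subset Z$ of the form $U=V\times S^1$ with $V$ contractible, use the strong generating set \eqref{quotientgenerators} and introduce a good increasing filtration in the sense of \cite{LiII}, indexed by the number of occurrences of the fields $\iota_A, \iota_X, L_X$ (and their derivatives) in a normally ordered monomial. By the OPE relations \eqref{opecdr} the filtration is compatible with all circle products, and both $D$ and $G_0$ preserve it. The associated graded is supercommutative and decomposes as $\cG'\otimes \cF'$, where $\cG'$ is generated by $f,\omega, A, \iota_X, L_X$ (and their derivatives) and $\cF'$ is the free odd abelian vertex algebra on $\iota_A$ and $\alpha^A$. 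As in the proof of Theorem~\ref{cohomology-comm}, $G_0$ is still a contracting homotopy on $\cG'$ in positive weight, giving $H^*(\cG',D)\cong H^*(V)=\mathbb{C}$, while $D$ acts trivially on $\cF'$. The filtration spectral sequence therefore abuts to $\cF$, and because the classes $[\iota_A]$ and $[\alpha^A]$ already realize this $\cF$ honestly in the quotient (Step~1), no higher differentials can kill them. Hence
$$H^*\bigl(\Omega^{\text{ch}}(U)^{i\mathbb{R}[t]}/\langle L_A\rangle, D\bigr)\cong \cF.$$

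\textbf{Step 3: Globalization.} Choose a finite trivializing good cover $\{V_\alpha\}$ of $M$ by contractible coordinate open sets (possible since $Z$ has finite topological type), and set $U_\alpha=\pi^{-1}(V_\alpha)$. Apply a Mayer--Vietoris/\v{C}ech argument for the weak sheaf $(\Omega^{\text{ch}}_Z)^{i\mathbb{R}[t]}/\langle L_A\rangle$. Since Step~2 yields the pure symplectic fermion $\cF$ on each finite intersection $U_{\alpha_0}\cap\cdots\cap U_{\alpha_p}$, and since the generators of $\cF$ are globally defined on $Z$, the \v{C}ech double complex factors as the de Rham \v{C}ech complex of the basic part tensored with $\cF$. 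The $E_2$ page therefore reads $H^*(Z)\otimes \cF$ and collapses, giving the stated isomorphism.

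\textbf{Main obstacle.} The genuine technical point is Step~2: one must verify that the filtration really separates the pair $(\iota_A,\alpha^A)$ from the rest so that the associated graded splits as a tensor product, and that the filtration spectral sequence converges to the symplectic fermion generated by the honest cocycles $[\iota_A],[\alpha^A]$ (and nothing more). The fact that $\xi^A$ is chiral basic, so that $\alpha^A$ commutes with $\iota_A$ only through the $(z-w)^{-2}$ term, is what rules out the appearance of a larger vertex algebra and pins down $\cF$ exactly.
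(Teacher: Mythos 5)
There is a genuine gap, and it sits exactly where you flagged the ``main obstacle,'' but in a different place than you suspected: your local computation in Step 2 is wrong. On $U=V\times S^1$ with $V$ contractible, the weight-zero part of $\Omega^{\text{ch}}(U)^{i\mathbb{R}[t]}/\langle L_A\rangle$ is $\Omega(U)^{S^1}$, and its $d$-cohomology is $H^*(V\times S^1)\cong H^*(S^1)$, not $H^*(V)=\mathbb{C}$: the class of $A$ (locally $d\theta$ up to a basic exact correction) is a nontrivial weight-zero cocycle that nothing in the complex can bound. So the correct local answer is $H^*(S^1)\otimes\cF$, not $\cF$. This matters for Step 3: if the local cohomology really were the constant coefficient system $\cF$ on a cover of $M$, your \v{C}ech argument would produce $H^*(M)\otimes\cF$, not $H^*(Z)\otimes\cF$; with the corrected local answer $H^*(S^1)\otimes\cF$, the assertion that the $E_2$ page ``reads $H^*(Z)\otimes\cF$'' is no longer a formality but requires a genuine Leray/Gysin-type argument for the circle bundle (the fiber classes must assemble into $H^*(Z)$), which you do not supply. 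As written, Steps 2 and 3 are mutually inconsistent. A smaller bookkeeping slip: your $\cG'$ contains $A$ \emph{and its derivatives} while $\cF'$ contains $\alpha^A=\partial A-\xi^A$, so $\partial A$ is counted twice and the associated graded does not split as the tensor product you claim; one must take $A$ but not its derivatives in the basic factor, as in the paper's choice of generators.

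The paper avoids the local-to-global step entirely: since $Z$ has finite topological type, the global section algebra $\Omega^{\text{ch}}(Z)^{i\mathbb{R}[t]}/\langle L_A\rangle$ itself has the explicit strong generating set \eqref{quotientgenerators} (with $U_\alpha$ replaced by $Z$), and the good increasing filtration is applied directly to it. The associated graded splits as $\cG_1\otimes\cG_2$ with $\cG_1$ generated by $f,\omega,\iota_X,L_X$, their derivatives, and $A$ (no derivatives), and $\cG_2$ generated by $\iota_A,\alpha^A$ and derivatives; then $G_0$ contracts $\cG_1$ in positive weight and the weight-zero part of $\cG_1$ is $\Omega(Z)^{S^1}$, giving $H^*(\cG_1,D)\cong H^*(Z)$ -- this is precisely where the fiber class you dropped is retained -- while $D$ acts trivially on $\cG_2=\mathrm{gr}(\cF)$, and one concludes by induction on filtration degree. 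Your Step 1 is essentially the paper's first paragraph (the paper additionally invokes simplicity of $\cF$ to get injectivity of $\cF\to H^*$, which is cleaner than arguing that no higher differentials can kill the classes), but Steps 2--3 need to be replaced either by the paper's global filtration or by a correctly formulated Leray argument with local coefficients $H^*(S^1)\otimes\cF$.
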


\begin{proof} First, $\iota_A$ represents a nontrivial cohomology class because $D(\iota_A) = 0$ but any preimage under $D$ would have degree $-2$ and weight one, which is impossible. As above $\alpha^A = \partial A - \xi^A$ also represents a nontrivial class, and it is easy to check that $$\phi \mapsto \iota_A, \qquad \psi \mapsto \alpha^A $$ defines a nontrivial homomorphism $\cF \ra H^*((\Omega^{\text{ch}}(Z)^{i\mathbb{R}[t]} / \langle L_A \rangle ), D)$ which is injective because $\cF$ is simple.

As with $\Omega^{\text{ch}}(Z)^{i\mathbb{R}[t]}$, there is a good increasing filtration on $\Omega^{\text{ch}}(Z)^{i\mathbb{R}[t]} / \langle L_A \rangle$ where $\Omega^{\text{ch}}(Z)^{i\mathbb{R}[t]} / \langle L_A \rangle_{(d)}$ is spanned by normally ordered monomials in the generators $f, \omega, A, \iota_A, \iota_X, L_X$ of degree at most $d$ in $\iota_A, \iota_X, L_X$ and their derivatives. Then $\text{gr}(\Omega^{\text{ch}}(Z)^{i\mathbb{R}[t]} / \langle L_A \rangle)$ is an associative, supercommutative algebra with the same generators. We write
$$\text{gr}(\Omega^{\text{ch}}(Z)^{i\mathbb{R}[t]} / \langle L_A \rangle) \cong \cG_1 \otimes \cG_2,$$ where $\cG_1$ is generated by $f, \omega, \iota_X, L_X$ and their derivatives together with $A$ (but not its derivatives), and $\cG_2$ is generated by $\iota_A, \alpha^A$ and their derivatives. Then $D$ and $G_0$ act on $\cG_1$, so $H^*(\cG_1, D) \cong H^*(Z)$. Note that $\cG_2 = \text{gr}(\cF)$ and $D$ acts trivially on $\cG_2$, so $H^*(\cG_2,D) \cong \cG_2$. Therefore $H^*(\text{gr}(\Omega^{\text{ch}}(Z)^{i\mathbb{R}[t]} / \langle L_A \rangle),D) \cong H^*(Z) \otimes \text{gr}(\cF)$. The result follows by induction on filtration degree. \end{proof}

\begin{corollary} \label{twistedcohomology-quot} If $Z$ has finite topological type, $$H^{\bullet}((\Omega^{\text{ch}}(Z)^{i\mathbb{R}[t]} / \langle L_A \rangle ), D_H) \cong H^{\bullet}(Z,H) \otimes \cF.$$
\end{corollary}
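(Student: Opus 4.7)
The plan is to mimic the proof of Corollary \ref{twistedcohomology-comm}, reducing the twisted statement to the untwisted Theorem \ref{cohomology-quot} by a \emph{chiral} $e^{B}$-untwisting trick and then globalizing via Mayer--Vietoris.

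First I would upgrade the classical $e^{B}$-untwisting to the chiral level.  For any small open $U\subset Z$ on which $H|_U=dB$ for some $S^{1}$-invariant $B\in\Omega^{2}(U)^{S^{1}}$ (obtained by $S^{1}$-averaging any primitive on $\pi^{-1}(V)$ with $V\subset M$ contractible), regard $B$ as a section of $\Omega^{\text{ch}}(U)$ of weight $0$ and degree $2$ and define
$$\mu_B\colon \Omega^{\text{ch}}(U)\longrightarrow \Omega^{\text{ch}}(U),\qquad \mu_B(\omega)=\ :\exp(B)\,\omega:\, ,$$
where $\exp(B)=\sum_{k\ge 0}\tfrac{1}{k!}:\!B^{k}\!:$ is a finite Wick-product sum.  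Since $D$ is a vertex-algebra derivation, a routine induction gives $D(:\!B^{k}\!:)=k:\!H\,B^{k-1}\!:$ and hence $D(\exp B)=\ :\!H\exp(B)\!:\,$.  Combining this with the module-derivation rule $D_H(a\circ_{-1} b)=D(a)\circ_{-1}b+(-1)^{|a|}a\circ_{-1}D_H(b)$ from \eqref{module derivation}, and the supercommutativity $:\!e^{B}H\!:=\,:\!He^{B}\!:$ of the weight-zero fields, yields the intertwining
$$D\circ \mu_B\;=\;\mu_B\circ D_H.$$
As $B$ has even form-degree, $\mu_B$ preserves the $\mathbb Z/2\mathbb Z$-grading, and its inverse is $\mu_{-B}$.

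Next I would verify that $\mu_B$ descends to $(\Omega^{\text{ch}}_Z)^{i\mathbb R[t]}/\langle L_A\rangle$.  Since $B$ is $S^{1}$-invariant and of weight zero it lies in the commutant $\mathrm{Com}(L_A,\Omega^{\text{ch}}(U))=\Omega^{\text{ch}}(U)^{i\mathbb R[t]}$, so $\mu_B$ preserves the commutant.  Because $L_A$ is central in this commutant by Lemma \ref{heisenberg}, the identity $:\!\exp(B)\,L_A\,X\!:=\,:\!L_A\,\exp(B)\,X\!:$ for $X\in\Omega^{\text{ch}}(U)^{i\mathbb R[t]}$ shows that $\mu_B$ carries $\langle L_A\rangle$ into itself, and therefore induces an isomorphism of complexes
$$\bigl(\Omega^{\text{ch}}(U)^{i\mathbb R[t]}/\langle L_A\rangle,\, D_H\bigr)\;\xrightarrow{\ \cong\ }\;\bigl(\Omega^{\text{ch}}(U)^{i\mathbb R[t]}/\langle L_A\rangle,\, D\bigr).$$
Passing to cohomology and invoking Theorem \ref{cohomology-quot} (valid because each such $U$ has finite topological type), together with the classical fact that $H|_U=dB$ exact gives $H^{\bullet}(U,H)\cong H^{\bullet}(U)$, yields the local identification
$$H^{\bullet}\bigl(\Omega^{\text{ch}}(U)^{i\mathbb R[t]}/\langle L_A\rangle,\, D_H\bigr)\;\cong\; H^{\bullet}(U,H)\otimes\cF.$$

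Finally, choose a finite cover $\{V_\alpha\}$ of $M$ by contractible opens (possible since $Z$, and hence $M$, has finite topological type) and set $U_\alpha=\pi^{-1}(V_\alpha)\cong V_\alpha\times S^{1}$; every finite intersection has the homotopy type of $S^{1}$ and in particular has $H^{3}=0$, so a primitive $B$ exists on each piece and the local computation applies uniformly.  The same Mayer--Vietoris/hypercohomology argument used at the end of Corollary \ref{twistedcohomology-comm} then glues these local identifications into the global isomorphism $H^{\bullet}(\Omega^{\text{ch}}(Z)^{i\mathbb R[t]}/\langle L_A\rangle,\, D_H)\cong H^{\bullet}(Z,H)\otimes\cF$.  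The main technical step is the chiral intertwining $D\circ\mu_B=\mu_B\circ D_H$ and its compatibility with the quotient by $\langle L_A\rangle$; once this is in hand, the remainder is formal manipulation of the already-established Theorem \ref{cohomology-quot} and a standard sheaf-theoretic gluing.
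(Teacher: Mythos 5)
Your overall route \emph{is} the paper's route: the paper proves this corollary exactly as Corollary \ref{twistedcohomology-comm}, by untwisting locally with a primitive $B$ of $H$ on the invariant sets $\pi^{-1}(V_\alpha)$ and then gluing by Mayer--Vietoris. The genuine gap is in the step you single out as the main technical point: the claimed intertwining $D\circ\mu_B=\mu_B\circ D_H$ for the map $\mu_B(\omega)=\,:\!e^{B}\omega\!:$ is false in positive weight. Unwinding your derivation, what you need is the associativity swap $:\!(\,:\!He^{B}\!:\,)\,\omega\!:\;=\;:\!e^{B}(\,:\!H\omega\!:\,)\!:$ for arbitrary $\omega\in\Omega^{\text{ch}}(U)$, and the Wick product is only quasi-associative: in mode language, $(\,:\!He^{B}\!:\,)\circ_{-1}$ differs from $(e^{B})\circ_{-1}\circ\,H\circ_{-1}$ by the correction terms $\sum_{k\geq 0}\bigl(H(-2-k)\,(e^{B})(k)+(e^{B})(-2-k)\,H(k)\bigr)$, and the nonnegative modes of the weight-zero fields $B,H$ do \emph{not} annihilate higher-weight elements (e.g.\ $B\circ_0\beta^{i}\neq 0$, $H\circ_0 b^{i}\neq 0$). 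Equation \eqref{module derivation} does not give you this swap, and supercommutativity of the weight-zero fields is not the issue. Concretely, on a chart with $B=\gamma^{1}d\gamma^{2}\wedge d\gamma^{3}+\gamma^{4}d\gamma^{5}\wedge d\gamma^{6}$, $H=dB$, and $\omega=\beta^{1}$, the two sides of $D(\,:\!e^{B}\omega\!:\,)=\,:\!e^{B}D_H\omega\!:$ differ by nonzero terms such as $:\!c^{2}c^{3}\partial c^{4}c^{5}c^{6}\!:$, so your $\mu_B$ is not a cochain map and the local identification does not follow as written. (The same subtlety shows $(e^{B})\circ_{-1}\neq\exp(B\circ_{-1})$; already $(\,:\!BB\!:\,)\circ_{-1}\neq (B\circ_{-1})^{2}$, cf.\ $(\,:\!\gamma\gamma\!:\,)\circ_{-1}\beta=\gamma^{2}\beta\pm 2\partial\gamma$.)

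The fix is small and leaves the rest of your argument intact: replace Wick multiplication by the \emph{state} $e^{B}$ with the operator exponential of Wick multiplication by $B$. Let $\beta_B(\omega)=\,:\!B\omega\!:$ and $\eta_H(\omega)=\,:\!H\omega\!:$. Since $D$ is an odd derivation of all circle products and $D(B)=H$, one has $[D,\beta_B]=\eta_H$, and $[\eta_H,\beta_B]=0$ by the commutator formula because $H\circ_k B=0$ for all $k\geq 0$ (forms have regular OPEs with forms). Hence $e^{-\beta_B}\,D\,e^{\beta_B}=D+\eta_H=D_H$ exactly. The operator $\beta_B$ preserves weight and the $\mathbb{Z}/2\mathbb{Z}$ degree, is locally nilpotent because each weight space has bounded degree, commutes with all nonnegative modes of $L_A$ once $B$ is chosen $S^1$-invariant (so it preserves $\Omega^{\text{ch}}(U)^{i\mathbb{R}[t]}$), and maps the ideal $\langle L_A\rangle$ to itself since it is a mode of an element of the commutant; your descent claim should be argued this way rather than via the formula $:\!e^{B}L_A X\!:$, since the ideal is not spanned by such products. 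With $\exp(\beta_B)$ in place of your $\mu_B$, the local isomorphism of complexes, the appeal to Theorem \ref{cohomology-quot}, and the Mayer--Vietoris gluing all go through exactly as you outline, matching the paper's proof.
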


\begin{proof} This is the same as the proof of Corollary \ref{twistedcohomology-comm}.
\end{proof}

\begin{remark} Even though $H^*(\Omega^{\text{ch}}(Z)^{i\mathbb{R}[t]}, D)$ and $H^*((\Omega^{\text{ch}}(Z)^{i\mathbb{R}[t]} / \langle L_A \rangle), D)$ do not vanish in positive weight (and similarly for the $D_H$-cohomology), Theorems \ref{cohomology-comm} and \ref{cohomology-quot} show that the higher weight components only depend on fixed vertex algebras $\cJ$ and $\cF$, and do not carry any topological information beyond the ordinary cohomology of $Z$.
\end{remark}

Let $\cA$ be a vertex algebra with a bigrading $\cA = \bigoplus_{n\geq 0} \bigoplus_{d \in \mathbb{Z}} \cA^d[n]$, where $\cA^n[d]$ is the subspace of degree $d$ and weight $n$. If $\text{dim}( \cA^d[n])$ is finite for all $n,d$, we may define the {\it graded character} $$\chi(\cA; q,z) = \sum_{n\geq 0} \sum_{d\in \mathbb{Z}} \text{dim} (\cA^d[n]) q^n z^d.$$ Clearly $\chi(\cF; q,z) = \prod_{n\geq 1} \big(1 + q^n z\big)\big(1 + q^n z^{-1}\big)$ if $\phi$ and $\psi$ are assigned degrees $-1$ and $1$, respectively. If $Z$ has finite topological type, $\text{dim}(H^d(Z))$ is finite for all $d$ and we write $\chi(Z;z) = \sum_d \text{dim}(H^d(Z)) z^d$.

\begin{corollary} \label{character-quot} If $Z$ has finite topological type, we have $$\chi(H^*((\Omega^{\text{ch}}(Z)^{i\mathbb{R}[t]} / \langle L_A \rangle), D); q,z)= \chi(Z;z) \prod_{n\geq 1} \big(1 + q^n z\big)\big(1 + q^n z^{-1}\big).$$
\end{corollary}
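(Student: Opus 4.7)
The plan is simply to take graded characters on both sides of the isomorphism provided by Theorem \ref{cohomology-quot}, so the only real work is to compute $\chi(\cF;q,z)$.

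First, Theorem \ref{cohomology-quot} gives an isomorphism $H^{*}((\Omega^{\text{ch}}(Z)^{i\mathbb{R}[t]}/\langle L_A\rangle),D) \cong H^{*}(Z)\otimes \cF$ compatibly with both the conformal weight and the degree gradings; here the isomorphism is induced by $\phi\mapsto\iota_A$ (weight $1$, degree $-1$) and $\psi\mapsto\alpha^A$ (weight $1$, degree $+1$), which matches the intended bidegrees of $\phi$ and $\psi$. Since $Z$ has finite topological type, $\dim H^d(Z)$ is finite for each $d$, and I would verify that each weight-graded piece of $\cF$ is finite-dimensional (this follows from the basis description below). Therefore the character of the tensor product factorises as
$$\chi\bigl(H^{*}((\Omega^{\text{ch}}(Z)^{i\mathbb{R}[t]}/\langle L_A\rangle),D);q,z\bigr) \;=\; \chi(Z;z)\cdot \chi(\cF;q,z),$$
and it only remains to evaluate the second factor.

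To compute $\chi(\cF;q,z)$, I would use the fact that the OPEs $\phi\phi\sim 0$, $\psi\psi\sim 0$, $\phi(z)\psi(w)\sim (z-w)^{-2}$ make $\cF$ freely generated (in the odd sense) by $\phi$ and $\psi$: by the reconstruction (PBW-type) theorem for vertex algebras, a linear basis of $\cF$ is given by the normally ordered monomials
$$:\partial^{a_1}\phi\,\partial^{a_2}\phi\cdots\partial^{a_k}\phi\,\partial^{b_1}\psi\,\partial^{b_2}\psi\cdots\partial^{b_l}\psi:,\qquad 0\le a_1<a_2<\cdots<a_k,\quad 0\le b_1<b_2<\cdots<b_l,$$
with strict inequalities since $\phi$ and $\psi$ are odd and self-anticommuting. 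Each factor $\partial^{a}\phi$ has weight $a+1$ and degree $-1$, each $\partial^{b}\psi$ has weight $b+1$ and degree $+1$, so collecting by weight $n=a+1\ge 1$ yields the generating function
$$\chi(\cF;q,z) \;=\; \prod_{n\ge 1}(1+q^n z^{-1})(1+q^n z).$$
Combining the two paragraphs gives the claimed formula.

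There is no serious obstacle here; the statement is essentially a repackaging of Theorem \ref{cohomology-quot} once one knows the character of $\cF$, and the latter is a standard free-field calculation. The only points requiring care are (i) checking that the isomorphism of Theorem \ref{cohomology-quot} is indeed bigraded with the stated bidegree conventions for $\phi,\psi$, and (ii) justifying the PBW basis for $\cF$, both of which are routine.
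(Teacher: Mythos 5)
Your proposal is correct and follows essentially the same route as the paper, which treats the corollary as immediate from Theorem \ref{cohomology-quot} together with the character formula $\chi(\cF;q,z)=\prod_{n\geq 1}(1+q^nz)(1+q^nz^{-1})$ stated just beforehand (with $\phi,\psi$ of degrees $-1,+1$ matching $\iota_A,\alpha^A$). Your extra verification of $\chi(\cF;q,z)$ via the PBW-type basis of normally ordered monomials in $\partial^a\phi,\partial^b\psi$ is the standard justification of what the paper calls ``clear.''
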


There is a similar notion of graded character when the degree grading is by $\mathbb{Z}/2\mathbb{Z}$. The character of $H^{\bullet}((\Omega^{\text{ch}}(Z)^{i\mathbb{R}[t]} / \langle L_A \rangle), D_H)$ can be written in the same way whenever $Z$ has finite topological type.


Next, given an $S^1$-invariant closed $3$-form $H$ on $Z$, we consider the $H$-twisted versions of the sheaves $(\Omega^{\text{ch}}_Z)^{i\mathbb{R}}$, $(\Omega^{\text{ch}}_Z)^{i\mathbb{R}[t]}$, and $(\Omega^{\text{ch}}_Z)^{i\mathbb{R}[t]} / \langle L_A \rangle$. The $S^1$-action on $\Omega^{\text{ch},H}(U)$ is now infinitesimally generated by the zero mode of $\tilde{L}_A - \widetilde{\iota_A H}$. Then \begin{equation} \label{twistedinvariantsheaf} U\mapsto \Omega^{\text{ch},H}(U)^{i\mathbb{R}}\end{equation} defines a sheaf of vertex algebra on $Z$  which we denote by $(\Omega^{\text{ch},H}_Z)^{i\mathbb{R}}$. As above, we fix a trivializing open cover $\{V_{\alpha}\}$ for $M$ such that each $V_{\alpha}$ is a coordinate open set, so $\{U_{\alpha} = \pi^{-1}(V_{\alpha})\}$ is a small open cover for $Z$, and $U_{\alpha} \cong V_{\alpha} \times S^1$. Then $\Omega^{\text{ch},H}(U_{\alpha})^{i\mathbb{R}}$ has the following strong generating set:  \begin{equation} \label{twistedinvariantgenerators} \tilde{f} \in \pi^*(C^{\infty}(V_{\alpha})),\qquad \tilde{\omega} \in \pi^*(\Omega^1(V_{\alpha})),\qquad \tilde{A}, \tilde{\iota}_A, \tilde{L}_A,\tilde{\Gamma}^A, \qquad \{\tilde{\iota}_X, \tilde{L}_X|\ X \in \text{Vect}_{\text{hor}}(U_{\alpha})\}.\end{equation} As above, $\tilde{A}, \tilde{\iota_A}$ generate $\cE$, and $\tilde{L}_A - \widetilde{\iota_A H}, \tilde{\Gamma}^A$ generate $\cH$. 


Similarly, $$U \mapsto \text{Com}((\tilde{L}_A -  \widetilde{\iota_A H})|_{U}, \Omega^{\text{ch},H}(U)) = \Omega^{\text{ch},H}(U)^{i \mathbb{R}[t]}$$ defines a vertex algebra sheaf on $Z$ which we denote by $(\Omega^{\text{ch},H}_Z)^{i\mathbb{R}[t]}$. For each $U\subset Z$, $(\tilde{L}_A - \widetilde{\iota_A H})|_U$ is central in $ \Omega^{\text{ch},H}(U)^{i\mathbb{R}[t]}$, and $$U \mapsto \Omega^{\text{ch},H}(U)^{i \mathbb{R}[t]} / \langle \tilde{L}_A - \widetilde{\iota_A H} \rangle$$ defines a vertex algebra sheaf on $Z$, denoted by $(\Omega^{\text{ch},H}_Z)^{i\mathbb{R}[t]} / \langle \tilde{L}_A - \widetilde{\iota_A H} \rangle$. The differential $\tilde{D}$ descends to a differential $\tilde{D}$ on $(\Omega^{\text{ch},H}_Z)^{i\mathbb{R}[t]} / \langle \tilde{L}_A - \widetilde{\iota_A H} \rangle$, and $\tilde{D} \tilde{\iota}_A = 0$.


\begin{theorem} The restriction of \eqref{eqn:untwisting2} to the $i\mathbb{R}$-invariant and $i\mathbb{R}[t]$-invariant subsheaves induces isomorphisms of differential graded vertex algebra sheaves \begin{equation} \label{eqn:untwisting2chiralinvt}\big(\big(\Omega^{\text{ch}}_Z\big)^{i\mathbb{R}}, D\big) \cong \big(\big(\Omega^{\text{ch},H}_Z\big)^{i\mathbb{R}}, \tilde{D}\big),\qquad \big(\big(\Omega^{\text{ch}}_Z\big)^{i\mathbb{R}[t]}, D\big) \cong \big(\big(\Omega^{\text{ch},H}_Z\big)^{i\mathbb{R}[t]}, \tilde{D}\big).\end{equation} Since $L_A \mapsto \tilde{L}_A -\widetilde{\iota_A H}$ under \eqref{eqn:untwisting2}, there is an induced isomorphism of differential graded vertex algebra sheaves
\begin{equation} \label{eqn:untwisting2quot}\big(\big(\big(\Omega^{\text{ch}}_Z\big)^{i\mathbb{R}[t]}/ \langle L_A \rangle\big), D\big) \cong\big(\big(\big(\Omega^{\text{ch},H}_Z\big)^{i\mathbb{R}[t]} / \langle \tilde{L}_A -\widetilde{\iota_A H}\rangle\big), \tilde{D}\big).\end{equation}
There are similar isomorphisms of differential graded sheaves of modules \begin{equation} \big(\big(\Omega^{\text{ch},\bullet }_Z\big)^{i\mathbb{R}}, D_H\big) \cong \big(\big(\Omega^{\text{ch},H,\bullet}_Z\big)^{i\mathbb{R}}, \tilde{D}_H\big),\end{equation}
\begin{equation}\big(\big(\Omega^{\text{ch},\bullet }_Z\big)^{i\mathbb{R}[t]}, D_H\big) \cong \big(\big(\Omega^{\text{ch},H,\bullet}_Z\big)^{i\mathbb{R}[t]}, \tilde{D}_H\big),\end{equation}
\begin{equation}\label{eqn:untwisting2quotmod}\big(\big(\big(\Omega^{\text{ch}, \bullet}_Z\big)^{i\mathbb{R}[t]}/ \langle L_A \rangle\big), D_H\big) \cong \big(\big(\big(\Omega^{\text{ch},H,\bullet}_Z\big)^{i\mathbb{R}[t]} / \langle \tilde{L}_A -\widetilde{\iota_A H}\rangle,\big) \tilde{D}_H\big).\end{equation}
\end{theorem}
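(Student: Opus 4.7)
The proof rests entirely on the observation that the untwisting isomorphism \eqref{eqn:untwisting} applied to the vector field $X_A$ gives $\phi: L_A \mapsto \tilde{L}_A - \widetilde{\iota_A H}$, together with the fact that Theorem~\ref{untwisting} already establishes $\phi$ as an isomorphism of sheaves of vertex algebras intertwining $D$ and $\tilde{D}$ (via \eqref{eqn:untwisting2}). My plan is to show that $\phi$ restricts well to the three nested structures: the $i\mathbb{R}$-invariant subsheaf, the $i\mathbb{R}[t]$-invariant subsheaf (which is a commutant), and the quotient by the central ideal.

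First I would argue that since $\phi$ is a vertex algebra isomorphism of sheaves, it intertwines all modes of corresponding operators; in particular it sends $(L_A)_k$ to $(\tilde{L}_A - \widetilde{\iota_A H})_k$ for every $k\in\mathbb{Z}$. An element of $\Omega^{\text{ch}}(U)$ lies in $\Omega^{\text{ch}}(U)^{i\mathbb{R}}$ precisely when $(L_A)_0$ annihilates it, and lies in $\Omega^{\text{ch}}(U)^{i\mathbb{R}[t]} = \mathrm{Com}(L_A|_U,\Omega^{\text{ch}}(U))$ precisely when all nonnegative modes $(L_A)_k$ ($k\geq 0$) annihilate it. Hence $\phi$ restricts to isomorphisms of the invariant subsheaves in \eqref{eqn:untwisting2chiralinvt}. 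Since $\phi$ intertwines $D$ and $\tilde{D}$ and these differentials preserve the respective invariant subsheaves (because $D$ commutes with $L_A = D\iota_A$, which is in turn a consequence of $D^2=0$, and similarly on the twisted side), the restrictions are isomorphisms of differential graded vertex algebra sheaves.

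Next, for the quotient sheaves, I would note that $L_A$ is central in $\Omega^{\text{ch}}(U)^{i\mathbb{R}[t]}$ by definition of the commutant, and its image $\tilde{L}_A - \widetilde{\iota_A H}$ is central in $\Omega^{\text{ch},H}(U)^{i\mathbb{R}[t]}$ for the analogous reason. Hence $\phi$ maps the two-sided ideal $\langle L_A\rangle$ bijectively onto $\langle \tilde{L}_A - \widetilde{\iota_A H}\rangle$, and descends to the isomorphism \eqref{eqn:untwisting2quot}. The induced differential is again compatible because $D L_A = D^2\iota_A = 0$ (and similarly $\tilde{D}(\tilde{L}_A - \widetilde{\iota_A H}) = 0$), so each ideal is $D$-invariant.

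Finally, for the module versions with differentials $D_H$ and $\tilde{D}_H$, the key point is that $\phi$ sends $H$ to $\tilde{H}$: since $H \in \Omega^3(Z)$ lies in weight zero and is locally a sum of Wick products of smooth functions and one-forms, and since $\phi$ is a vertex algebra homomorphism sending $f\mapsto \tilde{f}$ and $\omega\mapsto\tilde{\omega}$ (and preserves $\circ_{-1}$), we have $\phi(H)=\tilde{H}$. Consequently, for any local section $\omega$,
\[
\phi(D_H(\omega)) \;=\; \phi(D(\omega) + :H\omega:) \;=\; \tilde{D}(\phi(\omega)) + :\tilde{H}\,\phi(\omega): \;=\; \tilde{D}_H(\phi(\omega)),
\]
so $\phi$ intertwines $D_H$ and $\tilde{D}_H$ and is homogeneous for the $\mathbb{Z}/2\mathbb{Z}$-grading. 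The three module isomorphisms then follow from exactly the same invariance/ideal arguments as above. I do not expect a genuine obstacle here, since all the hard analytic work was already carried out in proving \eqref{eqn:untwisting2}; the only delicate bookkeeping is verifying that centrality of $L_A$ and exactness of the descent to quotients carry over to the twisted side, which is handled by the fact that $\phi$ preserves all $\circ_n$ products and sends $L_A$ to $\tilde{L}_A - \widetilde{\iota_A H}$.
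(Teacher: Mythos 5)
Your proposal is correct and follows the same route as the paper, which simply observes that the theorem is an immediate consequence of the untwisting isomorphism of Theorem \ref{untwisting}; you have merely spelled out the bookkeeping (restriction to kernels of the modes of $L_A$, descent through the central ideals, and $\phi(H)=\tilde{H}$ for the module differentials) that the paper leaves implicit.
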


\begin{proof} This is an immediate consequence of Theorem \ref{untwisting}.
\end{proof}

\section{T-duality for principal circle bundles in a background flux}\label{sect:T-duality}
In this section, we review the results in \cite{BEM,BEM2}, where the following situation is studied.
Let $Z$ be a principal $S^1$-bundle over $M$,
\begin{equation}\label{eqn:MVBx}
\begin{CD}
S^1 @>>> Z \\
&& @V\pi VV \\
&& M \end{CD}
\end{equation}
which is classified up to isomorphism by its first Chern class
{ $c_1(Z)\in H^2(M,\ZZ)$}. Assume that spacetime $Z$ is endowed with an H-flux which is
a representative in the
degree 3 Deligne cohomology of $Z$, that is
$H\in\Omega^3(Z)$ with integral periods (for simplicity, we drop factors of $\frac{1}{2\pi i}$),
together with
the following data. Consider a local trivialization $U_\alpha \times S^1$ of $Z\to M$, where
$\{U_\alpha\}$ is a good cover of $M$. Let $H_\alpha = H\Big|_{ U_\alpha \times S^1}
= d B_\alpha$, where $B_\alpha \in \Omega^2(U_\alpha \times S^1)$ and finally, $B_\alpha -B_\beta = F_{\alpha\beta}
\in \Omega^1(U_{\alpha\beta} \times S^1)$.
 Then the choice of H-flux entails that we are given a local trivialization
 as above and locally defined 2-forms $B_\alpha$ on it, together with closed 2-forms $F_{\alpha\beta}$ defined on double overlaps,  that is, $(H, B_\alpha, F_{\alpha\beta})$. This is also known as connection data or a connective structure of a gerbe on $Z$, see \cite{Brylinski}. Also the first Chern class
 of $Z\to M$ is represented in integral cohomology by the pair $(F, A_\alpha)$ where
$\{A_\alpha\}$ is a connection 1-form on the principal circle bundle 
$Z\to M$ and $F = dA_\alpha$ is the curvature 2-form of $\{A_\alpha\}$.

The {{T-dual}} is then another principal
$S^1$-bundle over $M$, denoted by $\widehat Z$,
  {}
\begin{equation}\label{eqn:MVBy}
\begin{CD}
S^1 @>>> \widehat Z \\
&& @V\widehat \pi VV     \\
&& M \end{CD}
\end{equation}
To define it, recall that $\pi_*$ denotes integration in the circle direction. Then 
we see that $\pi_* (H_\alpha) = d \pi_*(B_\alpha) = d {\widehat A}_\alpha$,
 so that $\{{\widehat A}_\alpha = \pi_*(B_\alpha)  \}$ is a connection 1-form whose curvature $ d {\widehat A}_\alpha = \widehat F_\alpha =  \pi_*(H_\alpha)$
 that is, $\widehat F = \pi_* H$. So let $\widehat Z$ denote the principal
$S^1$-bundle over $M$ whose first Chern class is  $\,\, c_1(\widehat Z) = [\pi_* H, \pi_*(B_\alpha)] \in H^2(M, \ZZ) $.

The Gysin sequence for $Z$ enables us to define a T-dual H-flux
$[\widehat H]\in H^3(\widehat Z,\ZZ)$, satisfying
\begin{equation} \label{eqn:MVBc}
c_1(Z) = \widehat \pi_* \widehat H \,,
\end{equation}
where $\pi_* $ and similarly $\widehat\pi_*$, denote the pushforward maps.
Note that $ \widehat H$ is not fixed by this data, since any integer
degree 3 cohomology class on $M$ that is pulled back to $\widehat Z$
also satisfies the requirements. However, $ \widehat H$ is
determined uniquely (up to cohomology) upon imposing
the condition $[H]=[\widehat H]$ on the correspondence space $Z\times_M \widehat Z$
as will be explained now.

The {\em correspondence space} (sometimes called the {\em doubled space}) is the fibred product 
and is defined as
$$
Z\times_M  \widehat Z = \{(x, \widehat x) \in Z \times \widehat Z|\ \pi(x)=\widehat\pi(\widehat x)\}.
$$
Then we have the following commutative diagram,
\begin{equation*} \label{eqn:correspondence}
\xymatrix @=6pc @ur { (Z, H) \ar[d]_{\pi} &
(Z\times_M  \widehat Z, [H]=[\widehat H]) \ar[d]_{\widehat p} \ar[l]^{p} \\ M & (\widehat Z, \widehat H)\ar[l]^{\widehat \pi}}
\end{equation*}
By requiring that
$$
[H]=[\widehat H] \in H^3(Z\times_M  \widehat Z, \ZZ),
$$
determines $[\widehat H] \in H^3(\widehat Z, \ZZ)$  uniquely, via an application of the Gysin sequence.
An alternate way to see this is explained below.

Let $(H, B_\alpha, F_{\alpha\beta}, L_{\alpha\beta})$ denote a gerbe with connection on $Z$.
We also choose a connection 1-form $A$ on $Z$. As usual, $X_A$ will denote the vector field generating the $S^1$-action on $Z$, normalized so that $\iota_A A = 1$, where $\iota_A$ is the contraction with respect to $X_A$. Then define $\widehat A_\alpha = -\imath_A B_\alpha$ on the chart $U_\alpha$ and
the connection 1-form $\widehat A= \widehat A_\alpha +d\widehat\theta_\alpha$
on the chart $U_\alpha\times S^1$. In this way we get a T-dual circle bundle
$\widehat Z \to M$ with connection 1-form $\widehat A$.

Without loss of generality, we can assume that $H$ is $S^1$-invariant. Consider
$$
\Omega = H - A\wedge F_{\widehat A}
$$
where $F_{\widehat A} = d {\widehat A}$ and $F_{A} = d {A}$ are the curvatures of $A$
and $\widehat A$ respectively. One checks that the contraction $\iota_A(\Omega)=0$ and
the Lie derivative $\text{Lie}_A(\Omega)=0$ so that $\Omega$ is a basic 3-form on $Z$, that is
$\Omega$ comes from the base $M$.

Setting
$$
\widehat H = F_A\wedge {\widehat A} + \Omega
$$
this defines the T-dual flux 3-form. One verifies that $\widehat H$ is a closed 3-form on $\widehat Z$.
It follows that on the correspondence space, one has as desired,
\begin{equation}
\widehat H = H + d (A\wedge \widehat A ).
\end{equation}

Our next goal is to determine the T-dual curving or B-field.
The Buscher rules imply that on the open sets $U_\alpha \times S^1\times S^1$ of the
correspondence space $Z\times_M \widehat Z$, one has
\begin{equation}
\widehat B_\alpha = B_\alpha + A\wedge \widehat A - d\theta_\alpha \wedge d\widehat \theta _\alpha\,,
\end{equation}
Note that
\begin{equation}
\imath_A \widehat B_\alpha = \imath_A
\left( B_\alpha + A\wedge \widehat A - d\theta_\alpha \wedge d\widehat \theta _\alpha\right) =
-\widehat A_\alpha + \widehat A - d\widehat \theta_\alpha = 0
\end{equation}
so that $\widehat B_\alpha$ is indeed a 2-form on $\widehat Z$ and not just on the correspondence
space. Obviously, $d \widehat B_\alpha = \widehat H$. Following the descent equations one arrives at the complete
T-dual gerbe with connection, $(\widehat H, \widehat B_\alpha, \widehat F_{\alpha\beta}, \widehat L_{\alpha\beta})$.
cf. \cite{BMPR}

The Buscher rules \cite{Buscher} for transforming the Ramond-Ramond (RR) fields can be encoded in the {\cite{BEM,BEM2}} generalization of
{\em Hori's formula} (see Hori \cite{Hori})
  {}

\begin{equation} \label{eqn:Hori}
T: \Omega^{\bullet}(Z)^{S^1} \ra \Omega^{\bullet+1}(\widehat{Z})^{S^1},\qquad T(G) =  \int_{S^1} e^{ A \wedge \widehat A }\ G \,,
\end{equation} where $G \in \Omega^\bullet(Z)^{S^1}$ is the total RR fieldstrength,
\begin{center}
$G\in\Omega^{\text{even}}(Z)^{S^1} \quad$ for {   { Type IIA}};\\
$G\in\Omega^{\text{odd}}(Z)^{S^1} \quad$ for {   { Type IIB}},\\
\end{center}
and where the right hand side of equation \eqref{eqn:Hori} is an invariant differential form on $Z\times_M\widehat Z$, and
the integration is along the $S^1$-fibre of $Z$.

We may write $G \in \Omega^{\bullet}(Z)^{S^1}$ in the form $G = G_0 + A\wedge G_1$, where $G_0 \in \pi^*(\Omega^{\bullet}(M))$ and $G_1 \in \pi^*(\Omega^{\bullet+1}(M))$. It is not difficult to check that \begin{equation} \label{dimredforms}T(G) = -G_1 + \widehat{A} \wedge G_0.\end{equation}

Recall that the twisted cohomology
is defined as the cohomology of the complex
$$H^\bullet(Z, H) = H^\bullet(\Omega(Z), d_H=d+ H\wedge).$$
By \eqref{dimredforms}, $T$ intertwines the differentials $d_H$ and $d_{\widehat H}$,
in particular taking $d_H$-closed forms $G$ to $d_{\widehat H}$-closed forms $T(G)$
and $d_H$-exact forms to $d_{\widehat H}$-exact forms. So $T$  induces a map on twisted cohomologies,
$$
T : H^\bullet(Z, H) \to H^{\bullet +1}(\widehat Z, \widehat H).
$$
Define the Riemannian metrics on $Z$ and $\widehat Z$ respectively by
$$
g=\pi^*g_M+R^2\, A\odot A,\qquad \widehat g=\widehat\pi^*g_M+1/{R^2} \,\widehat A\odot\widehat A.
$$
where $g_M$ is a Riemannian metric on $M$. Then $g$ is $S^1$-invariant and the length of each circle fibre is $R$; $\widehat g$ is $S^1$-invariant and the length of each circle fibre is $1/R$.

The following theorem summarizes the main consequence of T-duality for principal circle bundles in a background flux.

\begin{theorem}[T-duality isomorphism \cite{BEM,BEM2}]\label{thm:T-duality}
In the notation above, and
with the above choices of Riemannian metrics and flux forms, the map \eqref{eqn:Hori} is an isometry, inducing an isomorphism on twisted
cohomology groups,
\begin{equation}\label{T-duality-coh}
T : H^\bullet(Z, H) \stackrel{\cong}{\longrightarrow} H^{\bullet +1}(\widehat Z, \widehat H).
\end{equation}
Therefore under T-duality one has the exchange,
\begin{center}
{$R \Longleftrightarrow 1/R$}\quad and \quad
{{   {background H-flux} $\Longleftrightarrow$   {Chern class}}}
\end{center}
Moreover there is also an isomorphism of twisted K-theories,
\begin{equation}\label{T-duality-K}
T : K^\bullet(Z, H) \to K^{\bullet +1}(\widehat Z, \widehat H),
\end{equation}
such that the following diagram commutes,
\begin{equation}\label{T-duality-commute}
\xymatrix @=4pc 
{ K^\bullet(Z, H) \ar[d]_{Ch_H}  \ar[r]^{T} &
K^{\bullet +1}(\widehat Z, \widehat H)  \ar[d]^{Ch_{\widehat H}}   \\ H^\bullet(Z, H)  \ar[r]_{T}& H^{\bullet +1}(\widehat Z, \widehat H)}
\end{equation}

\end{theorem}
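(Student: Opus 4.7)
The plan is to prove the three assertions in sequence: the intertwining of twisted differentials (from which the cohomology isomorphism \eqref{T-duality-coh} follows), the isometry property, and the K-theory isomorphism \eqref{T-duality-K} with the commutative diagram \eqref{T-duality-commute}. The cohomological part rests on the key identity $\widehat H = H + d(A \wedge \widehat A)$ on the correspondence space $Z \times_M \widehat Z$, which is already established in the construction above.

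First I would verify that $T$ intertwines the twisted differentials. On the correspondence space, wedging with $e^{A \wedge \widehat A}$ gives an isomorphism of complexes $(\Omega^\bullet(Z \times_M \widehat Z), d_H) \to (\Omega^\bullet(Z \times_M \widehat Z), d_{\widehat H})$, since
\[
e^{-A \wedge \widehat A}\, d_H\, e^{A \wedge \widehat A} \;=\; d + (H + d(A \wedge \widehat A)) \wedge \;=\; d_{\widehat H}.
\]
Composing with integration along the $S^1$-fibre of $\pi\colon Z \to M$, which commutes with $d$ and shifts form degree by $-1$, shows that $T$ intertwines $d_H$ with $d_{\widehat H}$ and shifts degree by $1$ modulo $2$.

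Next, bijectivity of $T$ on $S^1$-invariant forms follows from the explicit formula \eqref{dimredforms}: writing $G = G_0 + A \wedge G_1$ with $G_i \in \pi^*(\Omega^\bullet(M))$, one has $T(G) = -G_1 + \widehat A \wedge G_0$. The analogous T-dual map $\widehat T$, defined by swapping the roles of $(Z,A,H)$ and $(\widehat Z, \widehat A, \widehat H)$, satisfies $\widehat T \circ T = \pm\mathrm{id}$ by a direct check using $\iota_A A = 1 = \iota_{\widehat A} \widehat A$. Hence $T$ is an isomorphism of $S^1$-invariant twisted complexes and descends to the cohomology isomorphism \eqref{T-duality-coh}. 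The isometry statement then follows from the specific choice of the metrics $g$ and $\widehat g$: the Hodge star for $g$ on $S^1$-invariant forms decomposes compatibly under $G = G_0 + A \wedge G_1$, and the interchange $R \leftrightarrow 1/R$ is exactly what is needed to balance the swap $A \leftrightarrow \widehat A$ in the computation of pointwise norms.

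Finally, the K-theory isomorphism \eqref{T-duality-K} requires a separate construction. The natural approach is to model twisted K-theory via (bundle gerbe) modules for an Azumaya algebra representing $H$, and to build $T$ as a Fourier--Mukai-type transform: pull a bundle gerbe module on $(Z, H)$ back to the correspondence space, twist by the Poincar\'e-type line bundle coming from the trivialization $\widehat H - H = d(A \wedge \widehat A)$, and push forward along $\widehat p$. Commutativity of the diagram \eqref{T-duality-commute} then follows because the Chern--Weil representative of the twisted Chern character is natural under pullback and compatible with integration over fibres, matching the cohomology-level formula \eqref{eqn:Hori}. I expect the main technical obstacle to be precisely this K-theory step: ensuring that the Fourier--Mukai construction descends to a well-defined map between the \emph{twisted} K-theories (in particular, that pushforward preserves the gerbe-module structure) and that it intertwines the twisted Chern characters on the nose. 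The cohomological intertwining, the bijectivity on forms, and the isometry are essentially direct computations by comparison.
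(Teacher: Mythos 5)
You should first note that the paper does not actually prove this theorem: it is quoted as a review of \cite{BEM,BEM2} (the section opens with ``we review the results in \cite{BEM,BEM2}''), and the only argument the paper itself supplies is the observation, via \eqref{dimredforms}, that $T$ intertwines $d_H$ and $d_{\widehat H}$. Your cohomological portion is correct and is essentially that standard argument: the conjugation identity $e^{-A\wedge\widehat A}\,d_H\,e^{A\wedge\widehat A}=d_{H+d(A\wedge\widehat A)}=d_{\widehat H}$ on the correspondence space (using $(A\wedge\widehat A)^2=0$), followed by integration over the fibre of $\widehat p\colon Z\times_M\widehat Z\to\widehat Z$ (not of $\pi\colon Z\to M$ -- a harmless slip), together with the projection formula for the pulled-back form $\widehat H$; and bijectivity from \eqref{dimredforms}, since the dual transform satisfies $\widehat T\circ T=-\mathrm{id}$ on $S^1$-invariant forms. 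This gives \eqref{T-duality-coh}, and the isometry claim is indeed a pointwise check against the rescaled metrics $g,\widehat g$.

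The K-theory half, however, is a plan rather than a proof, and you have correctly identified where the real content lies. As written, the Fourier--Mukai construction is not carried out: one must actually produce the map \eqref{T-duality-K} between \emph{twisted} K-groups (the ``Poincar\'e-type'' kernel is not an honest line bundle on $Z\times_M\widehat Z$ in the presence of flux, but a trivialization of the difference of the pulled-back gerbes, and the pushforward must be shown to exist and be an isomorphism in twisted K-theory), and then verify compatibility with the twisted Chern character, which itself requires the Chern--Weil model of \cite{BCMMS,MS}. It is worth knowing that the original proof in \cite{BEM,BEM2} takes a different route entirely: twisted K-theory is realized as the K-theory of a continuous-trace $C^*$-algebra with Dixmier--Douady class $H$, the circle action is lifted to an $\mathbb{R}$-action, and the Raeburn--Rosenberg theorem \cite{RR88} identifies the crossed product as the continuous-trace algebra of $(\widehat Z,\widehat H)$, with the degree shift coming from the Connes--Thom isomorphism; commutativity of \eqref{T-duality-commute} then follows from naturality of the twisted Chern character. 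A Fourier--Mukai argument can be made to work, but in your proposal that step is asserted rather than established, so for the K-theoretic statements you should either supply those constructions or, as this paper does, cite \cite{BEM,BEM2}.
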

The surprising {\em {{new}}} phenomenon discovered in \cite{BEM,BEM2} is that there is a
{\em {change in topology}} when the H-flux is non-trivial.

\subsection{T-duality of Courant algebroids}
As shown by Cavalcanti and Gualtieri in \cite{cavalcanti} (see also \cite{Baraglia}), the isomorphism $
T\colon\Omega^{\bullet}(Z)^{S^1} \to\Omega^{\bullet +1}(\widehat Z)^{S^1}$ given by \eqref{eqn:Hori} is compatible with an isomorphism of Courant algebroids $$\tau: (TZ \oplus T^*Z,[\cdot ,\cdot]_H)/{S^1} \ra (T\widehat{Z} \oplus T^*\widehat{Z},[\cdot ,\cdot]_{\widehat{H}})/{S^1}.$$


Recall that in the dimension reduction formalism, we identify $\text{Vect}_{\text{hor}}(Z)$ with $\text{Vect}(M)$. We write a vector field in $TZ/S^1$ as $$(X,f) = X + f X_A,\qquad X \in   \text{Vect}(M),\qquad f \in C^{\infty}(M).$$ Similarly, we write a form in $(T^*Z)/S^1$ as $$(\omega, g) = \omega + g A,\qquad \omega \in \Omega^1(M),\qquad g \in C^{\infty}(M).$$

\begin{theorem} \label{thm:CG} (Cavalcanti, Gualtieri \cite{cavalcanti}). Given a T-dual pair $(Z,H)$, $(\widehat{Z},\widehat{H})$, the map $\tau: (TZ \oplus T^*Z,[\cdot ,\cdot]_H)/{S^1} \ra (T\widehat{Z} \oplus T^*\widehat{Z},[\cdot ,\cdot]_{\widehat{H}})/{S^1}$ given by \begin{equation} \label{defoftau} \tau \big((X,f) + (\omega,g)\big) = (X,g) + (\omega,f),\end{equation} is an isomorphism of Courant algebroids. Moreover, $\Omega^{\bullet}(Z)^{S^1}$ is a Clifford module over $(TZ\oplus T^*Z, [\cdot, \cdot]_H)/S^1$ via \eqref{cliffordmod}, and the maps \eqref{defoftau} and \eqref{eqn:Hori} are compatible in the obvious sense.  \end{theorem}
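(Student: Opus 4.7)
The plan is to verify the three defining properties of a Courant algebroid isomorphism --- preservation of the anchor, the inner product, and the bracket --- for the explicit map $\tau$ given in \eqref{defoftau}, and then check compatibility with the Hori transform \eqref{eqn:Hori} acting on the Clifford module of $S^1$-invariant forms. The anchor compatibility is immediate from the definition: on both sides the anchor of $(X,f)+(\omega,g)$ descends to $X \in \mathrm{Vect}(M)$, and $\tau$ fixes the $X$-component. For the inner product, using the normalization $\iota_A A = 1$ one computes that the reduced pairing on $(TZ\oplus T^*Z)/S^1$ evaluates to $\tfrac{1}{2}(\iota_X\omega' + \iota_{X'}\omega) + \tfrac{1}{2}(fg'+f'g)$, which is manifestly invariant under the swap $f \leftrightarrow g$ that defines $\tau$; the identical formula holds on the $\widehat{Z}$-side.

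The main work, and the only real obstacle, is showing that $\tau$ intertwines the $H$-twisted and $\widehat{H}$-twisted reduced Courant brackets \eqref{dimredbracket}. I would proceed by decomposing $H = H^{(3)} + A \wedge H^{(2)}$ and $\widehat{H} = \widehat{H}^{(3)} + \widehat{A} \wedge \widehat{H}^{(2)}$, and invoking the T-duality identities established earlier in the section: on the correspondence space $\widehat{H} - H = d(A \wedge \widehat{A})$, together with $\pi_* H = F_{\widehat{A}}$ and $\widehat{\pi}_* \widehat{H} = F_A$. A direct computation then gives $H^{(3)} = \widehat{H}^{(3)} = \Omega$ (the basic 3-form on $M$) while $H^{(2)} = F_{\widehat{A}}$ and $\widehat{H}^{(2)} = F_A$. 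Substituting these into the bracket formula \eqref{dimredbracket} on each side and then applying $\tau$ term by term, one sees that the roles of $F_A$ and $F_{\widehat{A}}$ are exactly interchanged, as are $f_i \leftrightarrow g_i$, so the two brackets are matched. The $H^{(3)}$-contribution is unchanged because $\Omega$ is the same on both sides.

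For the Clifford module compatibility, I would write an arbitrary $S^1$-invariant form as $G = G_0 + A \wedge G_1$ with $G_0, G_1 \in \pi^*\Omega(M)$ and use the explicit formula $T(G) = -G_1 + \widehat{A}\wedge G_0$ from \eqref{dimredforms}. It then suffices to check the compatibility $T((X+fX_A + \omega + gA) \cdot G) = \tau(X+fX_A+\omega+gA) \cdot T(G)$ on each of the four generators $X$, $fX_A$, $\omega$, $gA$ separately, using \eqref{cliffordmod}. The cases of $X$ and $\omega$ are straightforward because they act on $G_0$ and $G_1$ in a way that commutes with the $\pi^*/\widehat{\pi}^*$-identification on $M$; the cases $fX_A$ and $gA$ are where the swap $f \leftrightarrow g$ enters, and they match precisely because $\iota_A(\widehat{A}\wedge G_0) = 0$ and $A \wedge (A \wedge \cdots) = 0$ reduce the Clifford action of $fX_A$ on the $A$-component of $G$ to multiplication of the $\pi^*$-component, which on the $\widehat{Z}$-side is produced by the Clifford action of $f\widehat{A} = \tau(fX_A)_{T^*\widehat{Z}}$ on $T(G)$. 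Since all verifications are linear-algebraic pointwise identities once the T-duality relations among $H, \widehat{H}, F_A, F_{\widehat{A}}$ have been imposed, no global or cohomological input is needed beyond what the setup already provides.
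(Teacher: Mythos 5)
The paper offers no proof of this theorem at all: it is quoted directly from Cavalcanti--Gualtieri \cite{cavalcanti}, so your direct verification is not competing with an argument in the text but rather supplying the computation the paper delegates to a citation. Your outline is sound and is essentially the argument of \cite{cavalcanti} transplanted into the dimensional-reduction notation of \S\ref{sect:courant} and \S\ref{sect:T-duality}: anchor and pairing preservation are immediate from the normalization $\iota_A A=1$ and the fact that $\tau$ only swaps the $f$- and $g$-components, and the bracket check reduces, after writing $H=H^{(3)}+A\wedge H^{(2)}$, $\widehat H=\widehat H^{(3)}+\widehat A\wedge\widehat H^{(2)}$ with $H^{(3)}=\widehat H^{(3)}=\Omega$, $H^{(2)}=F_{\widehat A}$, $\widehat H^{(2)}=F_A$ (exactly the identities the paper invokes in \S\ref{sect:chiralT-duality}), to observing that in \eqref{dimredbracket} the vertical-function slot of the vector part and the $A$-component slot of the form part are exchanged when $F_A\leftrightarrow F_{\widehat A}$ and $f_i\leftrightarrow g_i$, while the horizontal 1-form slot is symmetric under that exchange. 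Two small caveats. First, with the paper's convention $T(G)=-G_1+\widehat A\wedge G_0$ from \eqref{dimredforms}, the generator-by-generator check gives a uniform sign, $T(v\cdot G)=-\tau(v)\cdot T(G)$, rather than strict equality; this is harmless (it is what ``compatible in the obvious sense'' tolerates, and can be absorbed into the convention for fibre integration), but your statement of the intertwining identity should acknowledge it. Second, the identity you quote in passing, $\iota_A(\widehat A\wedge G_0)=0$, is not right as written --- contraction of $\widehat A\wedge G_0$ with $X_{\widehat A}$ gives $G_0$, and it is precisely this nonvanishing that makes the vertical cases work out --- so that sentence should be repaired, though the conclusion you draw from the case analysis is correct. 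You should also silently correct the evident typos in \eqref{dimredbracket} (e.g.\ the term $f_1\iota_{X_1}H^{(2)}$ should involve $\iota_{X_2}$) before matching terms.
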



\section{Chiral T-duality}\label{sect:chiralT-duality}

As in the previous section, let $(Z,H)$ and $(\widehat{Z}, \widehat{H})$ be a T-dual pair of principal $S^1$-bundles on $M$. Our first observation is that if we combine Theorems \ref{thm:vanishing} and \ref{thm:T-duality}, we obtain the following result.
 \begin{theorem}[T-duality isomorphism of twisted chiral de Rham cohomology]\label{thm:chiral-T-duality I}
There is an isomorphism in twisted chiral de Rham cohomology 
\begin{equation}\label{chiral T-duality-coh}
T^{\text{ch}} :H^\bullet(\Omega^{ch}(Z), D_H) \stackrel{\cong}{\longrightarrow} H^{\bullet+1}(\Omega^{ch}(\widehat Z), D_{\widehat H}),\end{equation}
such that the following diagram commutes,
\begin{equation}\label{T-duality-commute}
\xymatrix @=4pc 
{ H^\bullet(\Omega^{\text{ch}}(Z), D_H)  \ar[d]_{\cong}  \ar[r]^{T^{\text{ch}}} &
H^{\bullet+1}(\Omega^{\text{ch}}(\widehat Z), D_{\widehat H}) \ar[d]^{\cong}   \\ H^\bullet(Z, H)  \ar[r]_{T}& H^{\bullet +1}(\widehat Z, \widehat H)}
\end{equation}
\end{theorem}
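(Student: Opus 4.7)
The plan is to obtain $T^{\text{ch}}$ as the composition of the classical Hori-type map $T$ with the quasi-isomorphisms furnished by Theorem~\ref{thm:vanishing}, applied at both ends of the correspondence. There is essentially nothing new to construct: all the real work has been done in establishing the positive-weight vanishing of twisted chiral de Rham cohomology.

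More precisely, I would first note that Theorem~\ref{thm:vanishing} applied to $(Z,H)$ gives a quasi-isomorphism $(\Omega^{\bullet}(Z),d_H) \hookrightarrow (\Omega^{\text{ch},\bullet}(Z),D_H)$, hence an isomorphism
\[
\iota_Z \colon H^{\bullet}(Z,H) \xrightarrow{\cong} H^{\bullet}(\Omega^{\text{ch}}(Z),D_H),
\]
and similarly an isomorphism $\iota_{\widehat Z} \colon H^{\bullet+1}(\widehat Z,\widehat H) \xrightarrow{\cong} H^{\bullet+1}(\Omega^{\text{ch}}(\widehat Z), D_{\widehat H})$. Then I would define
\[
T^{\text{ch}} := \iota_{\widehat Z} \circ T \circ \iota_Z^{-1} \colon H^{\bullet}(\Omega^{\text{ch}}(Z),D_H) \longrightarrow H^{\bullet+1}(\Omega^{\text{ch}}(\widehat Z), D_{\widehat H}),
\]
where $T$ is the classical T-duality isomorphism of twisted cohomology groups from Theorem~\ref{thm:T-duality}. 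Since $\iota_Z$, $\iota_{\widehat Z}$, and $T$ are all isomorphisms, so is $T^{\text{ch}}$.

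Commutativity of the diagram in \eqref{T-duality-commute} is then automatic: the vertical arrows in the diagram are precisely $\iota_Z^{-1}$ and $\iota_{\widehat Z}^{-1}$ (the inverses of the quasi-isomorphisms), so the square
\[
\iota_{\widehat Z}^{-1} \circ T^{\text{ch}} = \iota_{\widehat Z}^{-1} \circ \iota_{\widehat Z} \circ T \circ \iota_Z^{-1} = T \circ \iota_Z^{-1}
\]
commutes by construction.

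There is no real obstacle here — this theorem is explicitly framed in the paper as a preliminary consequence of combining two already-established results, and as the authors point out in the introduction, its limitation is precisely that the map is only defined at the level of cohomology and is not induced by a morphism of the underlying chiral structures. The genuinely hard chiral refinements (the sheaf-level maps $\tau^{\text{ch}}$ and $T^{\text{ch}}$ of \eqref{tduality-chiral}--\eqref{tduality-chiralmoduleuntwistglobal}) are deferred to Section~\ref{sect:chiralT-duality} proper and require the Courant-algebroid formalism and the $i\mathbb{R}[t]$-invariant quotient sheaves; the statement under consideration here is the easy corollary that motivates that work.
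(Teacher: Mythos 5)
Your construction of $T^{\text{ch}}$ as $\iota_{\widehat Z}\circ T\circ \iota_Z^{-1}$, using the quasi-isomorphisms of Theorem \ref{thm:vanishing} on both sides and the classical isomorphism $T$ of Theorem \ref{thm:T-duality}, is exactly how the paper obtains this result, which it states as an immediate consequence of combining those two theorems. Your proposal is correct and matches the paper's argument, including the observation that commutativity of the diagram holds by construction since the vertical arrows are the inverses of the inclusion-induced isomorphisms.
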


Next, we wish to find appropriate chiral analogues of the Cavalcanti-Gualtieri isomorphism of Courant algebroids \eqref{defoftau}, as well as the compatible degree-shifting isomorphism \eqref{eqn:Hori} of twisted de Rham complexes. As usual, we fix connection forms $A \in \Omega^1(Z)$ and $\widehat{A}\in \Omega^1(\widehat{Z})$, respectively, so that $\iota_{A} A = 1 = \iota_{\widehat{A}} \widehat{A}$, where $\iota_A$ and $\iota_{\widehat{A}}$ are the contractions with respect to the vector fields $X_A$ and $X_{\widehat{A}}$ generated by the $S^1$-actions. Fix an open cover $\{W_{\alpha}\}$ for $M$ which trivializes both $S^1$-bundles, such that each $W_{\alpha}$ is a coordinate open set. Then $\{V_{\alpha} = \pi^{-1}(W_{\alpha})\}$ and $\widehat{V}_{\alpha} = \widehat{\pi}^{-1}(W_{\alpha})$ are small open covers for $Z$ and $\widehat{Z}$, respectively, and $\{U_{\alpha} = p^{-1}(V_{\alpha}) = \widehat{p}^{-1}(\widehat{V}_{\alpha})\}$ is a small open cover for $Z\times_M \widehat{Z}$. Note that $H$ can be written in the form $H=H^{(3)} + A \wedge H^{(2)}$ where $H^{(3)} \in \pi^* (\Omega^3(M))$ and $H^{(2)}\in \pi^*(\Omega^2(M))$. Similarly, $\widehat{H} = \widehat{H}^{(3)} + \widehat{A} \wedge \widehat{H}^{(2)}$ where $\tilde{H}^{(3)} \in \widehat{\pi}^*(\Omega^3(M))$ and $\tilde{H}^{(2)} \in \widehat{\pi}^*(\Omega^2(M))$. We may assume without loss of generality that $$H^{(3)} = \widehat{H}^{(3)},\qquad H^{(2)} =  d \widehat{A} = F_{\widehat{A}},\qquad \widehat{H}^{(2)} = dA = F_A.$$

For simplicity of notation, for the remainder of this section we shall drop the tilde from our notation for the generators of both $\Omega^{\text{ch},H}(V_{\alpha})$ and $\Omega^{\text{ch},\widehat{H}}(\widehat{V}_{\alpha})$. First, we write down some OPE relations in $\Omega^{\text{ch},H}(V_{\alpha})^{i\mathbb{R}[t]}$ using the formula \eqref{dimredbracket} for the bracket $[\cdot, \cdot]_H$ in the dimension reduction formalism. We obtain
$$L_X(z) \iota_Y(w) \sim \big(\iota_{[X,Y]} + \iota_X \iota_Y H + :(\iota_X \iota_Y \widehat{H}^{(2)}) \iota_A:\big)(z-w)^{-1},$$
$$L_X(z) L_Y(w) \sim \big(L_{[X,Y]}+ D \iota_X \iota_Y H + :(D\iota_X \iota_Y \widehat{H}^{(2)}) \iota_A: + : L_A (\iota_X \iota_Y \widehat{H}^{(2)}):\big)(w)(z-w)^{-1}.$$
We have $\iota_A H = H^{(2)}$, and the quotient $\Omega^{\text{ch},H}(V_{\alpha})^{i\mathbb{R}[t]} / \langle L_A - H^{(2)} \rangle$ has strong generators $$f \in \pi^*(C^{\infty}(W_{\alpha})),\qquad \omega \in \pi^*(\Omega^1(W_{\alpha})),\qquad L_X, \iota_X|\ X\in \text{Vect}_{\text{hor}}(V_{\alpha}),\qquad \iota_A, A.$$ The OPE relations are the same except that $L_A$ is replaced with $H^{(2)}$, so that $$L_X(z) L_Y(w) \sim \big(L_{[X,Y]} + D \iota_X \iota_Y H + :(D \iota_X \iota_Y \widehat{H}^{(2)}) \iota_A: + :H^{(2)} (\iota_X \iota_Y \widehat{H}^{(2)}):\big)(w)(z-w)^{-1}.$$

\begin{theorem}[T-duality of twisted vertex algebra sheaves] 
For each $V_{\alpha}$, the map \begin{equation} \label{tauchiral} \tau^{\text{ch}}: \Omega^{\text{ch},H}(V_{\alpha})^{i\mathbb{R}[t]} / \langle L_A - H^{(2)}\rangle  \ra  \Omega^{\text{ch},\widehat{H}}(\widehat{V}_{\alpha})^{i\mathbb{R}[t]} / \langle L_{\widehat{A}} - \widehat{H}^{(2)}\rangle  \end{equation} defined on generators by 
\begin{equation} 
\begin{split}
A \mapsto \iota_{\widehat{A}},\qquad \iota_A \mapsto  \widehat{A}, \qquad \iota_X \mapsto \iota_X, \qquad L_X \mapsto L_X ,\qquad X\in \text{Vect}_{\text{hor}}(V_{\alpha}),\\
f \mapsto f,\qquad \omega \mapsto \omega,\qquad f\in \pi^*(C^{\infty}(W_{\alpha})),\qquad \omega \in \pi^*(\Omega^1(W_{\alpha})),
\end{split}
\end{equation}
is a vertex algebra isomorphism. Since $\tau^{\text{ch}}$ is defined in a coordinate-independent way, it is well-defined on overlaps. We obtain an isomorphism of vertex algebra sheaves on $M$,
\begin{equation} \label{tauchiralsheaf}\tau^{\text{ch}}: \pi_* \big(\big(\Omega^{\text{ch},H}_Z \big)^{i\mathbb{R}[t]} / \langle L_A - H^{(2)}\rangle\big) \ra 
\widehat{\pi}_* \big(\big(\Omega^{\text{ch},\widehat{H}}_{\widehat{Z}}\big)^{i\mathbb{R}[t]} / \langle L_{\widehat{A}}- \widehat{H}^{(2)}\rangle\big),\end{equation} which we also denote by $\tau^{\text{ch}}$. In particular, we have a vertex algebra isomorphism 
\begin{equation} \label{tauchiralglobal}\tau^{\text{ch}}: \Omega^{\text{ch},H}(Z)^{i\mathbb{R}[t]}/ \langle L_A - H^{(2)}\rangle \ra  \Omega^{\text{ch},\widehat{H}}(\widehat{Z})^{i\mathbb{R}[t]}/\langle L_{\widehat{A}}- \widehat{H}^{(2)}\rangle .\end{equation} \end{theorem}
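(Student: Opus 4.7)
The plan is to apply the universal principle for morphisms of vertex algebra sheaves stated in the remark after the coordinate-free description: once the generators and defining relations are coordinate-independent, it suffices to verify that (a) the OPEs among the generators are preserved and (b) the ideal of defining relations is annihilated. Well-definedness on overlaps of the cover $\{W_\alpha\}$ is then automatic because every generator in the definition of $\tau^{\text{ch}}$ — the connection form $A$, the odd field $\iota_A$, horizontal $L_X, \iota_X$, and basic $f, \omega$ — is intrinsic data on $M$. Bijectivity follows by constructing the symmetric reverse map $\widehat\tau^{\text{ch}}$ obtained by interchanging $(Z,H,A)\leftrightarrow(\widehat Z,\widehat H,\widehat A)$ and observing that the two compositions act as the identity on generators.

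First I would dispose of the routine OPEs. The $bc$-pair relation $A(z)\iota_A(w)\sim (z-w)^{-1}$ maps to $\iota_{\widehat A}(z)\widehat A(w)\sim (z-w)^{-1}$, the corresponding $bc$-relation on $\widehat Z$; trivial OPEs among $\{A,\iota_A\}$ translate to trivial OPEs among $\{\iota_{\widehat A},\widehat A\}$. The OPEs of horizontal $L_X, \iota_X$ with basic $f\in\pi^*(C^\infty(W_\alpha))$ and $\omega\in\pi^*(\Omega^1(W_\alpha))$ are determined purely by Cartan calculus on $M$, and are preserved because $\tau^{\text{ch}}$ is the identity on every such ingredient. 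The structural relations (the $C^\infty(M)$-linearity of $\iota_X$ and $L_X$ in the vector-field slot, the Leibniz rule for $L_{gX}$, and the chain-rule relation for $\partial g(\phi_1,\ldots,\phi_n)$) are preserved for the same reason.

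The main obstacle is verifying the twisted OPEs $L_X(z)\iota_Y(w)$ and $L_X(z) L_Y(w)$, whose explicit form is given just above the theorem statement using the dimension-reduction bracket \eqref{dimredbracket}. I would expand each side using the decomposition $H = H^{(3)} + A\wedge H^{(2)}$ together with the T-duality identifications $H^{(3)} = \widehat H^{(3)}$, $H^{(2)} = F_{\widehat A}$, $\widehat H^{(2)} = F_A$, so that every term becomes either basic on $M$ (and thus fixed by $\tau^{\text{ch}}$) or carries a single factor of $A$ or $\iota_A$ (mapped to $\iota_{\widehat A}$ or $\widehat A$). The quotient $\langle L_A - H^{(2)}\rangle$ plays an essential role here: it replaces $:L_A(\iota_X\iota_Y \widehat H^{(2)}):$ in the $L_X L_Y$ OPE by the purely basic $:H^{(2)}(\iota_X\iota_Y\widehat H^{(2)}):$, which, together with the $:\widehat H^{(2)}(\iota_X\iota_Y H^{(2)}):$ piece arising from the expansion of $D\iota_X\iota_Y H$ via $dA = F_A = \widehat H^{(2)}$, matches symmetrically the analogous pair appearing on the $\widehat Z$-side after the same manipulations. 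Resolving the odd normal-ordering signs via $:\widehat A\,\alpha: = -:\alpha\,\widehat A:$ for odd basic $\alpha$ with trivial OPE against $\widehat A$ (and the analogous identity for $\iota_{\widehat A}$) completes the matching of the $A\wedge (\text{basic 1-form})$ cross-terms.

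Once these OPE computations are carried out, the same coordinate-free description on the $\widehat Z$-side shows that $\tau^{\text{ch}}$ sends generators to generators and relations to relations. The reverse map $\widehat\tau^{\text{ch}}$ is constructed by the symmetric prescription and satisfies $\widehat\tau^{\text{ch}}\circ\tau^{\text{ch}} = \mathrm{id}$ and $\tau^{\text{ch}}\circ\widehat\tau^{\text{ch}} = \mathrm{id}$ on generators, establishing bijectivity. The local isomorphism on each $V_\alpha$ then glues over the trivializing cover $\{W_\alpha\}$ of $M$ to produce the sheaf isomorphism \eqref{tauchiralsheaf}, and taking global sections yields \eqref{tauchiralglobal}.
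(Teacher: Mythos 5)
Your proposal is correct and follows essentially the same route as the paper's proof: verify that the generator OPEs are preserved by expanding $H = H^{(3)} + A\wedge H^{(2)}$ with the T-duality identifications $H^{(3)}=\widehat H^{(3)}$, $H^{(2)}=F_{\widehat A}$, $\widehat H^{(2)}=F_A$, with the quotient relation converting $:L_A(\iota_X\iota_Y\widehat H^{(2)}):$ into the basic term $:H^{(2)}(\iota_X\iota_Y\widehat H^{(2)}):$ exactly as in the paper's computation of the $L_X L_Y$ OPE, and then note that the relations \eqref{cirelations} are annihilated, that bijectivity follows by symmetry, and that coordinate-independence gives agreement on overlaps. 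The only cosmetic difference is that you spell out the inverse map $\widehat\tau^{\text{ch}}$ where the paper simply declares bijectivity clear, and you should also remember the small but nontrivial checks $L_X(z)\iota_A(w)$ and $L_X(z)A(w)$ (which use $d\widehat A = H^{(2)}$ and $dA=\widehat H^{(2)}$), which the paper does explicitly and which fit the same pattern you describe.
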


\begin{proof} First, we check that $\tau^{\text{ch}}$ preserves OPEs.
$$L_X(z) \iota_A(w) \sim (\iota_X \iota_A H)(w)(z-w)^{-1} = (\iota_X H^{(2)})(w)(z-w)^{-1},$$ and
$$\tau^{\text{ch}}(L_X)(z) \tau^{\text{ch}}(\iota_A)(w)= L_X (z) \widehat{A}(w) \sim ( \iota_X H^{(2)}) (w)(z-w)^{-1}.$$ Here we are using $\iota_X \circ_0  \widehat{A} = 0$, so $L_X  \circ_0 \widehat{A} = \iota_X \circ_0 d \widehat{A} = \iota_X \circ_0 H^{(2)}$. The proof that $\tau^{\text{ch}}$ preserves the OPE of $L_X(z) A(w)$ is the same. Next, $$L_X(z) \iota_Y(w) \sim \big(\iota_{[X,Y]} + : \iota_A(\iota_X \iota_Y \widehat{H}^{(2)}): + \iota_X \iota_Y H\big)(w)(z-w)^{-1}$$ 
$$ = \big(\iota_{[X,Y]} +  : \iota_A(\iota_X \iota_Y \widehat{H}^{(2)}): + \iota_X \iota_Y H^{(3)} + :A (\iota_X \iota_Y H^{(2)}):\big)(w)(z-w)^{-1},$$ and 
$$\tau^{\text{ch}}\big(\iota_{[X,Y]} +  : \iota_A(\iota_X \iota_Y \widehat{H}^{(2)}): + \iota_X \iota_Y H^{(3)} + :A (\iota_X \iota_Y H^{(2)}):\big)$$ $$ = \iota_{[X,Y]} +: \widehat{A}(\iota_X \iota_Y \widehat{H}^{(2)}):+ \iota_X \iota_Y H^{(3)} + :\iota_{\widehat{A}} (\iota_X \iota_Y H^{(2)}):.$$ 
Moreover, 
$$\tau^{\text{ch}}(L_X)(z) \tau^{\text{ch}}(\iota_Y)(w) \sim \big(\iota_{[X,Y]} +  : \iota_{\widehat{A}} (\iota_X \iota_Y H^{(2)}): + \iota_X \iota_Y H^{(3)} $$ $$+ :\widehat{A} (\iota_X \iota_Y \widehat{H}^{(2)}):\big)(w)(z-w)^{-1}.$$ 
Next, $$L_X(z) L_Y(w) \sim  \big(L_{[X,Y]} + D(: \iota_A(\iota_X \iota_Y \widehat{H}^{(2)}):) + D\iota_X \iota_Y H^{(3)} $$ $$+ D(: A \iota_X \iota_Y H^{(2)}:)\big)(w)(z-w)^{-1},$$ whose first-order pole coincides with 
$$ L_{[X,Y]} + : L_A (\iota_X \iota_Y \widehat{H}^{(2)}): - :\iota_A(D \iota_X \iota_Y \widehat{H}^{(2)}): + D\iota_X \iota_Y H^{(3)} $$ $$ + :\widehat{H}^{(2)} ( \iota_X \iota_Y H^{(2)}): -  : A (D \iota_X \iota_Y H^{(2)}):.$$
We have $$\tau^{\text{ch}}\big(L_{[X,Y]} + : L_A(\iota_X \iota_Y \widehat{H}^{(2)}): - :\iota_A(D \iota_X \iota_Y \widehat{H}^{(2)}): + D\iota_X \iota_Y H^{(3)} $$ $$ + :\widehat{H}^{(2)} ( \iota_X \iota_Y H^{(2)}): -  : A (D \iota_X \iota_Y H^{(2)}:)\big) $$
$$ = L_{[X,Y]} + : H^{(2)}(\iota_X \iota_Y \widehat{H}^{(2)}): - :\widehat{A}(D \iota_X \iota_Y \widehat{H}^{(2)}): + D\iota_X \iota_Y H^{(3)} $$ $$+ :\widehat{H}^{(2)} ( \iota_X \iota_Y H^{(2)}): -  : \iota_{\widehat{A}} (D \iota_X \iota_Y H^{(2)}:).$$
Moreover,
$$\tau^{\text{ch}}(L_X)(z) \tau^{\text{ch}}(L_Y)(w) \sim  \big(L_{[X,Y]} + D(: \iota_{\widehat{A}} (\iota_X \iota_Y H^{(2)}):) $$ $$+ D\iota_X \iota_Y H^{(3)} + D(: \widehat{A} \iota_X \iota_Y \widehat{H}^{(2)}:)\big)(w)(z-w)^{-1},$$ and the first-order pole is easily seen to coincide with 
$$L_{[X,Y]} + : H^{(2)}(\iota_X \iota_Y \widehat{H}^{(2)}): - :\widehat{A}(D \iota_X \iota_Y \widehat{H}^{(2)}): + D\iota_X \iota_Y H^{(3)} $$ $$+ :\widehat{H}^{(2)} ( \iota_X \iota_Y H^{(2)}): -  : \iota_{\widehat{A}} (D \iota_X \iota_Y H^{(2)}):.$$
The remaining OPE relations are easy to check and are omitted. Finally, the fact that $\tau^{\text{ch}}$ annihilates the relations \eqref{cirelations} is clear, as well as the bijectivity. \end{proof}

We regard \eqref{tauchiralglobal} as the analogue of the Cavalcanti-Gualtieri isomorphism \eqref{defoftau}. Note that $\tau^{\text{ch}}$ preserves degree but not weight since $\iota_A, \iota_{\widehat{A}}$ have weight one, but $A, \widehat{A}$ have weight zero. 

\begin{remark} In general, $\tau^{\text{ch}}$ does {\it not} lift to an isomorphism of the vertex algebra sheaves \begin{equation} \label{liftediso} \tau^{\text{ch}}: \pi_* \big(\big(\Omega^{\text{ch},H}_Z \big)^{i\mathbb{R}[t]}\big) \ra \widehat{\pi}_* \big(\big(\Omega^{\text{ch},\widehat{H}}_{\widehat{Z}}\big)^{i\mathbb{R}[t]}\big).\end{equation} However, if $Z$ is {\it self-dual} in the sense that $c_1(Z) = c_1(\widehat{Z})$, we may choose $H, \widehat{H}$ so that $H^{(2)} = \widehat{H}^{(2)}$. Defining $\tau^{\text{ch}}(L_A) = L_{\widehat{A}}$, it is easy to check that $\tau^{\text{ch}}$ lifts to an isomorphism \eqref{liftediso}. In fact, defining $\tau^{\text{ch}}(\Gamma^A) = \Gamma^{\widehat{A}}$, one can check that \eqref{liftediso} actually extends to an isomorphism $$\tau^{\text{ch}}: \pi_* \big(\big(\Omega^{\text{ch},H}_Z \big)^{i\mathbb{R}}\big) \ra \widehat{\pi}_* \big(\big(\Omega^{\text{ch},\widehat{H}}_{\widehat{Z}}\big)^{i\mathbb{R}}\big).$$ The reason is that $\Gamma^A$ commutes with $f, \omega, \iota_X$, and satisfies \eqref{copyofheis} as well as
$$L_X(z) \Gamma^A(w) \sim \iota_X \xi^A(w)(z-w)^{-1}.$$ Recall that $\xi^A \in \pi^*(\Omega^{\text{ch}}_0(M))$ has weight one and degree one, and satisfies $D(\xi^A) = \partial dA$. Since $H^{(2)} = \widehat{H}^{(2)}$, we may also assume that $\xi^A = \xi^{\widehat{A}}$. \end{remark}

\begin{remark}Note that $\tau^{\text{ch}}$ does not intertwine the differentials, since $$\tau^{\text{ch}} \circ D (\iota_A) = 0,\qquad D\circ \tau^{\text{ch}}(\iota_A) = D(\widehat{A}) = H^{(2)}.$$ (Recall that we are omitting the tilde, so $D$ really means $\tilde{D}$). In general, $$H^*((\Omega^{\text{ch},H}(Z)^{i\mathbb{R}[t]} / \langle L_A -H^{(2)} \rangle), D) \ncong H^*((\Omega^{\text{ch}}(\widehat{Z})^{i\mathbb{R}[t]} / \langle L_{\widehat{A}} - \widehat{H}^{(2)} \rangle), D).$$ However, there are {\it modified} differentials $D+ H_0$ and $D + \widehat{H}_0$ on $\Omega^{\text{ch},H}(Z)^{i\mathbb{R}[t]} / \langle L_A -H^{(2)} \rangle$ and $\Omega^{\text{ch}}(\widehat{Z})^{i\mathbb{R}[t]} / \langle L_{\widehat{A}} - \widehat{H}^{(2)} \rangle$, respectively, such that $\tau^{\text{ch}} \circ (D + H_0) = (D + \widehat{H}_0) \circ \tau^{\text{ch}}$. Here $H_0$ and $\widehat{H}_0$ denote the zero modes of $H$ and $\widehat{H}$, regarded as vertex operators. These differentials are square-zero and homogeneous with respect to the $\mathbb{Z}/2\mathbb{Z}$ grading, and $$(D + H_0)(\iota_X) = L_X,\qquad (D + H_0)(L_X) = 0,\qquad (D + H_0)(f) = df,\qquad (D+ H_0)(\omega) = d\omega,$$ $$\tau^{\text{ch}} \circ (D+ H_0)(\iota_A) = \tau^{\text{ch}}(H^{(2)}) = H^{(2)} = (D+ \widehat{H}_0) \circ \tau^{\text{ch}} \iota_A  = (D+ \widehat{H}_0)(\widehat{A}).$$ It follows that we have an isomorphism of vertex algebras $$H^{\bullet} ((\Omega^{\text{ch},H, \bullet}(Z)^{i\mathbb{R}[t]} / \langle L_A -H^{(2)} \rangle), D + H_0) \cong H^{\bullet}((\Omega^{\text{ch}, \bullet}(\widehat{Z})^{i\mathbb{R}[t]} / \langle L_{\widehat{A}} - \widehat{H}^{(2)} \rangle), D + \widehat{H}_0).$$
 \end{remark}

Recall next that we have untwisting isomorphisms
$$(\Omega^{\text{ch}})^{i\mathbb{R}[t]}/ \langle L_A \rangle \rightarrow (\Omega^{\text{ch},H})^{i\mathbb{R}[t]} / \langle L_A -H^{(2)} \rangle,\qquad (\Omega^{\text{ch},\widehat{H}})^{i\mathbb{R}[t]} / \langle L_{\widehat{A}} \rangle \ra (\Omega^{\text{ch}})^{i\mathbb{R}[t]} / \langle L_{\widehat{A}} - \widehat{H}^{(2)} \rangle.$$ There is no analogue of this untwisting in the setting of Courant algebroids; it is a feature of the vertex algebra setting. \begin{corollary} Composing $\tau^{\text{ch}}$ with these isomorphisms, we obtain a degree-preserving isomorphism of untwisted vertex algebra sheaves on $M$, also denoted by $\tau^{\text{ch}}$,
\begin{equation}\tau^{\text{ch}}: \pi_* \big( \big(\Omega^{\text{ch}}_Z\big)^{i\mathbb{R}[t]} / \langle L_A \rangle \big) \rightarrow  \widehat{\pi}_* \big(\big(\Omega^{\text{ch}}_{\widehat{Z}}\big)^{i\mathbb{R}[t]} / \langle L_{\widehat{A}} \rangle \big),\end{equation} and the corresponding isomorphism of global section algebras $$ \tau^{\text{ch}}: \Omega^{\text{ch}}(Z)^{i\mathbb{R}[t]} / \langle L_A \rangle \rightarrow \Omega^{\text{ch}}(Z)^{i\mathbb{R}[t]} / \langle L_{\widehat{A}} \rangle .$$ \end{corollary}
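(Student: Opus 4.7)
The plan is to obtain the map as the composition of three isomorphisms already established. Writing $\phi$ and $\widehat{\phi}$ for the untwisting isomorphisms \eqref{eqn:untwisting2quot} applied to $(Z,H)$ and $(\widehat Z,\widehat H)$ respectively, and recalling that the normalization $\iota_A A = 1$ gives $\iota_A H = H^{(2)}$ (and similarly $\iota_{\widehat A}\widehat H = \widehat H^{(2)}$), these take the form
\begin{align*}
\phi&\colon \pi_*\!\bigl((\Omega^{\text{ch}}_Z)^{i\mathbb{R}[t]}/\langle L_A\rangle\bigr)\xrightarrow{\ \sim\ }\pi_*\!\bigl((\Omega^{\text{ch},H}_Z)^{i\mathbb{R}[t]}/\langle L_A - H^{(2)}\rangle\bigr),\\
\widehat{\phi}&\colon \widehat\pi_*\!\bigl((\Omega^{\text{ch}}_{\widehat Z})^{i\mathbb{R}[t]}/\langle L_{\widehat A}\rangle\bigr)\xrightarrow{\ \sim\ }\widehat\pi_*\!\bigl((\Omega^{\text{ch},\widehat H}_{\widehat Z})^{i\mathbb{R}[t]}/\langle L_{\widehat A} - \widehat H^{(2)}\rangle\bigr).
\end{align*}
I would take the map in the corollary to be $\widehat\phi^{-1}\circ \tau^{\text{ch}}\circ \phi$, with $\tau^{\text{ch}}$ the isomorphism of twisted vertex algebra sheaves from \eqref{tauchiralsheaf}. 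Because each of the three factors is an isomorphism of sheaves of vertex algebras between its specified source and target, so is the composition; passing to global sections then yields the stated map on global section algebras.

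For degree-preservation, interpreted as preservation of the $\mathbb{Z}/2\mathbb{Z}$-grading induced by $J_0$, I would verify the claim factor by factor. For $\tau^{\text{ch}}$ this is precisely the observation recorded just before the corollary: the only generators whose $\mathbb{Z}$-degrees move are $A,\iota_A$, which are swapped with $\iota_{\widehat A},\widehat A$, switching $\mathbb{Z}$-degrees $\pm 1$ but preserving parity, while every other generator is fixed. For $\phi$ and $\widehat\phi$, the only generator whose image is not $\mathbb{Z}$-degree-homogeneous is $L_X \mapsto \tilde L_X - \widetilde{\iota_X H}$; since $\iota_X H$ is a $2$-form, $\widetilde{\iota_X H}$ has even $\mathbb{Z}$-degree, matching $\tilde L_X$ modulo $2$. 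Hence the composition preserves the $\mathbb{Z}/2\mathbb{Z}$-degree.

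The content of the corollary is essentially formal given the preceding theorems, so there is no serious obstacle. The only mild subtlety, which must be handled carefully, is to confirm that the ideals $\langle L_A\rangle$ and $\langle L_A - H^{(2)}\rangle$ really do correspond under $\phi$ (and similarly on the hatted side), so that $\phi$ descends to the specified quotients; this is immediate from the formula $L_A \mapsto \tilde L_A - \widetilde{\iota_A H} = \tilde L_A - H^{(2)}$ recorded in Theorem \ref{untwisting}.
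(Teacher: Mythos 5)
Your proposal is correct and matches the paper's (implicit) argument exactly: the corollary is obtained by composing the untwisting isomorphisms \eqref{eqn:untwisting2quot} on either side with the twisted T-duality isomorphism \eqref{tauchiralsheaf}, and your check that $L_A \mapsto \tilde L_A - \widetilde{\iota_A H} = \tilde L_A - H^{(2)}$ makes the ideals correspond is precisely the point that makes the composition well-defined. Your reading of ``degree-preserving'' as preservation of the $\mathbb{Z}/2\mathbb{Z}$-grading (since $A \leftrightarrow \iota_{\widehat A}$ swaps $\mathbb{Z}$-degrees $\pm 1$ and $L_X \mapsto \tilde L_X - \widetilde{\iota_X H}$ mixes degrees $0$ and $2$) is the correct and careful interpretation.
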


To define the analogue of the map $T: \Omega^{\bullet}(Z)^{S^1} \ra \Omega^{\bullet +1}(\widehat{Z})^{S^1}$, we need to regard $\big(\Omega^{\text{ch}}_Z\big)^{i \mathbb{R}[t]}/ \langle L_A  \rangle $ not as a vertex algebra sheaf, but as a sheaf of modules over itself which we denote by $\big(\Omega^{\text{ch}, \bullet}_Z\big)^{i \mathbb{R}[t]}/ \langle L_A  \rangle$. For each $V_{\alpha}$, $(\Omega^{\text{ch},\bullet}(V_{\alpha}))^{i \mathbb{R}[t]} / \langle L_A  \rangle$ is generated by the vacuum vector $1$ as a module over itself.


\begin{theorem} For each $V_{\alpha}$, define a linear map 
\begin{equation} \label{tchiralmodule} T^{\text{ch}}: \Omega^{\text{ch}, \bullet}(V_{\alpha})^{i\mathbb{R}[t]} / \langle L_A \rangle \rightarrow  \Omega^{\text{ch}, \bullet +1}(\widehat{V}_{\alpha})^{i \mathbb{R}[t]} / \langle L_{\widehat{A}} \rangle,\end{equation} inductively as follows:
$$T^{\text{ch}}(1) = \widehat{A}, \qquad T^{\text{ch}}(\nu_k (\mu)) = (-1)^{|\nu|} (\tau^{\text{ch}}(\nu))_k (T^{\text{ch}}(\mu)).$$ Here $\nu \in \Omega^{\text{ch}}(V_{\alpha})^{i\mathbb{R}[t]} / \langle L_A \rangle$ regarded as a vertex algebra, and $\mu \in \Omega^{\text{ch},\bullet}(V_{\alpha})^{i\mathbb{R}[t]} / \langle L_A  \rangle$ regarded as a module. Then $T^{\text{ch}}$ is a linear isomorphism of modules which preserves weight and shifts degree. Moreover, $T^{\text{ch}}$ coincides at weight zero with the classical T-duality map \eqref{eqn:Hori}. We obtain a weight-preserving, degree-shifting isomorphism of sheaves of modules on $M$ 
\begin{equation} \label{T-duality-module}T^{\text{ch}}: \pi_*\big(\big(\Omega^{\text{ch}, \bullet}_Z\big)^{i\mathbb{R}[t]} / \langle L_A  \rangle \big) \rightarrow  \widehat{\pi}_*\big( \big(\Omega^{\text{ch}, \bullet +1}_{\widehat{Z}}\big)^{i\mathbb{R}[t]}/ \langle L_{\widehat{A}} \rangle\big),\end{equation}  which we also denote by $T^{\text{ch}}$. In particular, we get a linear isomorphism of global sections
\begin{equation} \label{T-duality-moduleglobal}T^{\text{ch}}: \Omega^{\text{ch}, \bullet}(Z)^{i\mathbb{R}[t]} / \langle L_A  \rangle \rightarrow  \Omega^{\text{ch}, \bullet +1}(\widehat{Z})^{i\mathbb{R}[t]} / \langle L_{\widehat{A}} \rangle.\end{equation} \end{theorem}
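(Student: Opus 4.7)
The plan is to verify in turn that (a) the inductive rule is consistent, (b) the resulting map is weight-preserving and degree-raising, (c) at weight zero it recovers \eqref{eqn:Hori}, and (d) it admits an explicit two-sided inverse; coordinate independence then lifts the local construction to the claimed sheaf morphism \eqref{T-duality-module}, and passage to global sections gives \eqref{T-duality-moduleglobal}.

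For (a) and (b), I would observe that $\Omega^{\text{ch}, \bullet}(V_\alpha)^{i\mathbb{R}[t]}/\langle L_A\rangle$, acting on itself, is cyclic on the vacuum $1$, so a module intertwiner is determined by the image of $1$ provided that image is annihilated by whatever annihilates $1$ on the source side. In the weight-shifted mode convention of \S\ref{sect:vertex}, the annihilator of $1$ is generated by $\{\nu_k : k>0\}$; since $\widehat A$ has weight $0$, each $(\tau^{\text{ch}}(\nu))_k\widehat A$ lies in weight $-k$, which vanishes for $k>0$ by nonnegativity of the conformal weight grading. Hence the inductive rule extends consistently. Because $\widehat A$ has weight $0$ and degree $1$, $\tau^{\text{ch}}$ is bigrading-preserving, and $\nu_k$ shifts weight by $-k$, the resulting $T^{\text{ch}}$ preserves weight and raises degree by $1$.

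For (c), I would decompose an $S^1$-invariant weight-zero form as $\mu = G_0 + A\wedge G_1$ with $G_i \in \pi^*\Omega(W_\alpha)$ and compute both summands via the inductive rule. Using $\tau^{\text{ch}}|_{\pi^*\Omega(W_\alpha)} = \mathrm{id}$, $\tau^{\text{ch}}(A) = \iota_{\widehat A}$, and the basic OPE $\iota_{\widehat A}\circ_0\widehat A = 1$, one finds $T^{\text{ch}}(G_0) = \widehat A \wedge G_0$ and $T^{\text{ch}}(A\wedge G_1) = -G_1$, reproducing \eqref{dimredforms}. For (d), exploiting the involutive nature of T-duality, I would define $S^{\text{ch}}$ by the symmetric recipe with $S^{\text{ch}}(1) = -A$ and $(\tau^{\text{ch}})^{-1}$ in place of $\tau^{\text{ch}}$, then check $S^{\text{ch}}\circ T^{\text{ch}} = \mathrm{id}$ and $T^{\text{ch}}\circ S^{\text{ch}} = \mathrm{id}$ on the finite strong generating set \eqref{quotientgenerators}; each case reduces to an OPE identity already used in constructing $\tau^{\text{ch}}$. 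Coordinate independence of the defining data $(A,\widehat A, \tau^{\text{ch}})$ and the coordinate-free principle recorded before Theorem \ref{untwisting} then ensure agreement on overlaps $V_\alpha\cap V_\beta$ and give the sheaf morphism, from which \eqref{T-duality-moduleglobal} follows by taking global sections.

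The main obstacle is the bijectivity in step (d). A quick unwinding of the inductive rule suggests the tempting explicit formula $T^{\text{ch}}(\mu) = (-1)^{|\mu|} :\tau^{\text{ch}}(\mu)\,\widehat A:$, but this has an apparent large kernel because $\widehat A$ is odd and $:\widehat A\,\widehat A: = 0$; the inverse therefore cannot be read off this naive expression and must be produced symmetrically from the T-dual side. Making this precise requires a PBW-type basis argument modelled on the filtration used in the proof of Theorem \ref{cohomology-quot}, together with careful sign and degree bookkeeping on the strong generating set, to confirm that $S^{\text{ch}}$ really inverts $T^{\text{ch}}$ as linear maps on all of $\Omega^{\text{ch}, \bullet}(V_\alpha)^{i\mathbb{R}[t]}/\langle L_A\rangle$.
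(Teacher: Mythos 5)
There is a genuine gap, and it sits exactly where you placed the least weight: the consistency step (a). In the shifted convention $\nu_k=\nu(k+\mathrm{wt}(\nu)-1)$, the annihilator of the vacuum is \emph{not} generated by $\{\nu_k\colon k>0\}$; it is generated by $\{\nu_k\colon k\geq 1-\mathrm{wt}(\nu)\}$, so for the weight-one generators $\iota_A,\iota_X,L_X$ of the quotient the zero modes $(\iota_A)_0,(\iota_X)_0,(L_X)_0$ also kill $1$. Your weight-counting argument only covers $k>0$ (where $(\tau^{\text{ch}}(\nu))_k\widehat{A}$ lands in negative weight) and therefore says nothing about precisely the nontrivial compatibilities, which are \emph{not} automatic: for $\nu=L_X$, $k=0$, $\mu=1$ the recursion requires $(\tau^{\text{ch}}(L_X))_0\,\widehat{A}=0$, while $(L_X)_0\widehat{A}=\mathrm{Lie}_X\widehat{A}=\iota_XF_{\widehat{A}}=\iota_XH^{(2)}\neq 0$ in general; the vanishing holds only because the untwisted $\tau^{\text{ch}}(L_X)$ (obtained by composing \eqref{tauchiral} with the two untwisting isomorphisms) carries flux correction terms such as $:\!\iota_{\widehat{A}}(\iota_XH^{(2)})\!:$, whose zero mode on $\widehat{A}$ cancels $\iota_XH^{(2)}$. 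Reducing independence of the recursion from the chosen presentation $\mu=\nu_k(\mu')$ to such generator checks (via the quasi-commutativity/quasi-associativity identities, which is what the paper invokes) and then actually performing the zero-mode checks is the real content of the proof, and it is absent from your argument. A smaller related slip: $\tau^{\text{ch}}$ is \emph{not} bigrading-preserving --- it swaps $\iota_A$ (weight one) with $\widehat{A}$ (weight zero), as the paper remarks; $T^{\text{ch}}$ still preserves weight, but because every mode $\nu_k$ lowers weight by $k$ irrespective of $\mathrm{wt}(\nu)$, not for the reason you give.

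By contrast, the step you single out as the main obstacle, bijectivity, is the easy one and needs no PBW or filtration argument. Once $T^{\text{ch}}$ and the symmetric $S^{\text{ch}}$ (with $S^{\text{ch}}(1)=-A$ and $(\tau^{\text{ch}})^{-1}$) are known to be well defined, each composite is an honest module endomorphism of a module cyclic on $1$ (the parity signs cancel and $(\tau^{\text{ch}})^{-1}\circ\tau^{\text{ch}}=\mathrm{id}$), so it suffices to evaluate on the cyclic vector: $S^{\text{ch}}T^{\text{ch}}(1)=S^{\text{ch}}(\widehat{A})=-(\iota_A)_0(-A)=1$ by \eqref{copyofbc}, and symmetrically for the other composite; checking on the strong generating set \eqref{quotientgenerators} is not the relevant notion for a map of modules. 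Your step (c) is fine and agrees with the paper's shorter verification via $T^{\text{ch}}(A)=-1$ and $\tau^{\text{ch}}(\omega)=\omega$, which recovers \eqref{eqn:Hori}. So the proposal as written does not establish well-definedness --- which is where both the difficulty and the flux enter --- while the portion you defer as incomplete is in fact immediate once well-definedness is in place.
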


\begin{proof} The fact that $T^{\text{ch}}$ is well-defined is a consequence of the standard quasi-commutativity and quasi-associativity formulas in vertex algebra theory. Note that $$T^{\text{ch}}(A) = T^{\text{ch}}(A_0 (1)) = -(\tau^{\text{ch}}(A))_0 (T^{\text{ch}}(1)) = -(\iota_{\widehat{A}})_0(\widehat{A}) = - 1.$$ Since $\tau^{\text{ch}}(\omega) = \omega$ for all $\omega \in \pi^*(\Omega^1(W_{\alpha}))$, it follows that at weight zero, $T^{\text{ch}}$ coincides with \eqref{eqn:Hori}. \end{proof}

At weight zero, \eqref{T-duality-moduleglobal} induces the classical T-duality isomorphism \eqref{T-duality-coh} in twisted cohomology. However, this map does {\it not} intertwine the differentials $D_H$ and $D_{\widehat{H}}$ in positive weight. If $Z$ has finite topological type, by combining the isomorphism \eqref{T-duality-coh} with the description $H^{\bullet}((\Omega^{\text{ch}}(Z)^{i \mathbb{R}[t]} / \langle L_A \rangle), D_H) \cong H^{\bullet}(Z,H) \otimes \cF$ given by Corollary \ref{twistedcohomology-quot}, we obtain a linear isomorphism 
\begin{equation} \label{T-duality-higherweightcoh}H^{\bullet}((\Omega^{\text{ch}}(Z)^{i \mathbb{R}[t]} / \langle L_A \rangle), D_H) \cong H^{\bullet +1 }((\Omega^{\text{ch}}(\widehat{Z})^{i \mathbb{R}[t]} / \langle L_{\widehat{A}}\rangle), D_{\widehat{H}}).\end{equation} In particular, the graded characters of these structures agree.




\end{document}